\theoremstyle{plain}
\newtheorem{theorem}{Theorem}[section]
\newtheorem{lemma}[theorem]{Lemma}
\theoremstyle{remark}
\newtheorem{remark}[theorem]{Remark}
\numberwithin{equation}{section}
\begin{document}

\title[Singular limits for incompressible fluids in moving thin domains]{On singular limit equations for incompressible fluids in moving thin domains}

\author{Tatsu-Hiko Miura}
\address{Graduate School of Mathematical Sciences, The University of Tokyo, 3-8-1 Komaba, Meguro, Tokyo, 153-8914 Japan}
\email{thmiura@ms.u-tokyo.ac.jp}

\subjclass[2010]{Primary: 35Q35, 35R01, 76M45; Secondary: 76A20}

\keywords{Moving thin domain, moving surface, singular limit, incompressible fluid, surface fluid}

\begin{abstract}
  We consider the incompressible Euler and Navier-Stokes equations in a three-dimensional moving thin domain.
  Under the assumption that the moving thin domain degenerates into a two-dimensional moving closed surface as the width of the thin domain goes to zero, we give a heuristic derivation of singular limit equations on the degenerate moving surface of the Euler and Navier-Stokes equations in the moving thin domain and investigate relations between their energy structures.
  We also compare the limit equations with the Euler and Navier-Stokes equations on a stationary manifold, which are described in terms of the Levi-Civita connection.
\end{abstract}

\maketitle

\section{Introduction} \label{S:Introduction}
Fluid flows in a thin domain appear in many problems of natural sciences, e.g. ocean dynamics, geophysical fluid dynamics, and fluid flows in cell membranes.
In the study of the incompressible Navier-Stokes equations in a three-dimensional thin domain mathematical researchers are mainly interested in global existence of a strong solution for large data since a three-dimensional thin domain with sufficiently small width can be considered ``almost two-dimensional.''
It is also important to investigate the behavior of a solution as the width of a thin domain goes to zero.
We may naturally ask whether we can derive limit equations as a thin domain degenerates into a two-dimensional set and compare properties of solutions to the original three-dimensional equations and the corresponding two-dimensional limit equations.
There are several works studying such problems with a three-dimensional flat thin domain~\cite{HS10,IRS07,RS93,TZ96} of the form
\begin{align*}
  \Omega_\varepsilon = \{x=(x',x_3)\in\mathbb{R}^3 \mid x'\in\omega,~\varepsilon g_0(x')<x_3<\varepsilon g_1(x')\}
\end{align*}
for small $\varepsilon>0$, where $\omega$ is a two-dimensional domain and $g_0$ and $g_1$ are functions on $\omega$, and a three-dimensional thin spherical domain~\cite{TZ97} which is a region between two concentric spheres of near radii.
(We also refer to~\cite{R95} for the strategy of analysis of the Euler equations in a flat and spherical thin domain and its limit equations.)
However, mathematical studies of an incompressible fluid in a thin domain have not been done in the case where a thin domain and its degenerate set have more complicated geometric structures.
(See~\cite{PRR02} for the mathematical analysis of a reaction-diffusion equation in a thin domain degenerating into a lower dimensional manifold.)

In this paper we are concerned with the incompressible Euler and Navier-Stokes equations in a three-dimensional thin domain that moves in time.
The purpose of this paper is to give a heuristic derivation of singular limits of these equations as a moving thin domain degenerates into a two-dimensional moving closed surface.
We also investigate relations between the energy structures of the incompressible fluid systems in a moving thin domain and the corresponding limit systems on a moving closed surface.

Here let us explain our results on limit equations and strategy to derive them.
Let $\Gamma(t)$ be an evolving closed surface in $\mathbb{R}^3$ and $V_\Gamma^N(\cdot,t)$ and $\nu(\cdot,t)$ its (scalar) outward normal velocity and unit outward normal vector field, respectively.
We assume that $\Gamma(t)$ does not change its topology.
Also, let $\Omega_\varepsilon(t)$ be a tubular neighborhood of $\Gamma(t)$ of radius $\varepsilon$ in $\mathbb{R}^3$ with sufficiently small $\varepsilon>0$.
We consider the Euler equations
\begin{alignat}{2}
  \partial_tu+(u\cdot\nabla)u+\nabla p &= 0 &\quad &\text{in}\quad \Omega_\varepsilon(t),\,t\in(0,T), \label{E:Intro_Eu_Momentum} \\
  \mathrm{div}\,u &= 0 &\quad &\text{in}\quad \Omega_\varepsilon(t),\,t\in(0,T), \label{E:Intro_Eu_Continuity} \\
  u\cdot\nu_\varepsilon &= V_\varepsilon^N &\quad &\text{on}\quad \partial\Omega_\varepsilon(t),\,t\in(0,T) \label{E:Intro_Eu_Boundary}
\end{alignat}
and the Navier-Stokes equations with (perfect slip) Navier boundary condition
\begin{alignat}{2}
  \partial_tu+(u\cdot\nabla)u+\nabla p &= \mu_0\Delta u &\quad &\text{in}\quad \Omega_\varepsilon(t),\,t\in(0,T), \label{E:Intro_NS_Momentum}\\
  \mathrm{div}\,u &= 0 &\quad &\text{in}\quad \Omega_\varepsilon(t),\,t\in(0,T), \label{E:Intro_NS_Continuity} \\
  u\cdot\nu_\varepsilon &= V_\varepsilon^N &\quad &\text{on}\quad \partial\Omega_\varepsilon(t),\,t\in(0,T), \label{E:Intro_NS_B_Norm} \\
  [D(u)\nu_\varepsilon]_{\text{tan}} &= 0 &\quad &\text{on}\quad \partial\Omega_\varepsilon(t),\,t\in(0,T). \label{E:Intro_NS_B_Tan}
\end{alignat}
Here $\nu_\varepsilon$ and $V_\varepsilon^N$ denote the unit outward normal vector field and the (scaler) outward normal velocity of $\partial\Omega_\varepsilon(t)$.
Also, $\mu_0>0$ is the viscosity coefficient and $D(u):=\{\nabla u+(\nabla u)^T\}/2$ is the strain rate tensor with $(\nabla u)^T$ the transpose of the gradient matrix $\nabla u$.
We suppose that $\Omega_\varepsilon(t)$ admits the normal coordinate system $x=\pi(x,t)+d(x,t)\nu(\pi(x,t),t)$ for $x\in\Omega_\varepsilon(t)$, where $\pi(\cdot,t)$ is the closest point mapping onto $\Gamma(t)$ and $d(\cdot,t)$ is the signed distance from $\Gamma(t)$ increasing in the direction of $\nu(\cdot,t)$.
Based on the normal coordinates, we expand the velocity field $u(x,t)$ on $\Omega_\varepsilon(t)$ in powers of the signed distance $d(x,t)$ as
\begin{align} \label{E:Intro_Expand_U}
  u(x,t) = v(\pi(x,t),t)+d(x,t)v^1(\pi(x,t),t)+\cdots, \quad x\in\Omega_\varepsilon(t)
\end{align}
and the pressure $p(x,t)$ similarly.
We substitute them for the equations in $\Omega_\varepsilon(t)$ and determine equations on $\Gamma(t)$ that the zeroth order term $v$ in \eqref{E:Intro_Expand_U} satisfies.
Then we obtain limit equations of the Euler equations \eqref{E:Intro_Eu_Momentum}--\eqref{E:Intro_Eu_Boundary}:
\begin{alignat}{2}
  \partial^\bullet_vv+\nabla_\Gamma q+q^1\nu &= 0 &\quad&\text{on}\quad \Gamma(t),\,t\in(0,T), \label{E:Intro_Eu_Momentum_Limit} \\
  \mathrm{div}_\Gamma v &= 0 &\quad&\text{on}\quad \Gamma(t),\,t\in(0,T), \label{E:Intro_Eu_Continuity_Limit} \\
  v\cdot\nu &= V_\Gamma^N &\quad&\text{on}\quad \Gamma(t),\,t\in(0,T). \label{E:Intro_Eu_Normal_Limit}
\end{alignat}
Here $\partial^\bullet_v=\partial_t+v\cdot\nabla$ is the material derivative along the velocity field $v$ and $\nabla_\Gamma$ and $\mathrm{div}_\Gamma$ denote the tangential gradient and the surface divergence on $\Gamma(t)$, respectively (see Section~\ref{S:Preliminaries} for their definitions).
Similarly, we get limit equations of the Navier-Stokes equations \eqref{E:Intro_NS_Momentum}--\eqref{E:Intro_NS_B_Tan}:
\begin{alignat}{2}
  \partial^\bullet_vv+\nabla_\Gamma q+q^1\nu &= 2\mu_0\mathrm{div}_\Gamma(P_\Gamma D^{tan}(v)P_\Gamma) &\quad&\text{on}\quad \Gamma(t),\,t\in(0,T), \label{E:Intro_NS_Momentum_Limit}\\
  \mathrm{div}_\Gamma v &= 0 &\quad&\text{on}\quad \Gamma(t),\,t\in(0,T), \label{E:Intro_NS_Continuity_Limit} \\
  v\cdot\nu &= V_\Gamma^N &\quad&\text{on}\quad \Gamma(t),\,t\in(0,T). \label{E:Intro_NS_Normal_Limit}
\end{alignat}
Here $D^{tan}(v):=\{\nabla_\Gamma v+(\nabla_\Gamma v)^T\}/2$ and $P_\Gamma$ is the orthogonal projection onto the tangent plane of $\Gamma(t)$.
Note that if we take the average of \eqref{E:Intro_Expand_U} in the normal direction of $\Gamma(t)$ then
\begin{align*}
  \frac{1}{2\varepsilon}\int_{-\varepsilon}^\varepsilon u(y+\rho\nu(y,t),t)\,d\rho = v(y,t)+(\text{higher order terms in $\varepsilon$}), \quad y\in\Gamma(t).
\end{align*}
Therefore, formally speaking, our limit equations are equations satisfied by the limit of the average in the thin direction of a solution to the original Euler or Navier-Stokes equations in $\Omega_\varepsilon(t)$ as $\varepsilon$ goes to zero.
(The above method is also applied in~\cite{MGL17pre} to derive a limit equation of a nonlinear diffusion equation in a moving thin domain.)

In the equations \eqref{E:Intro_Eu_Momentum_Limit} and \eqref{E:Intro_NS_Momentum_Limit} the scalar function $q^1$, which comes from the normal derivative of the bulk pressure $p$ (see the expansion \eqref{E:P_Expand} of $p$ and \eqref{E:Nabla_P} in the proof of Theorem~\ref{T:Eu_Limit}), is determined by the normal component of \eqref{E:Intro_Eu_Momentum_Limit} and \eqref{E:Intro_NS_Momentum_Limit}.
Therefore, the limit Euler system \eqref{E:Intro_Eu_Momentum_Limit}--\eqref{E:Intro_Eu_Normal_Limit} is intrinsically equivalent to
\begin{align} \label{E:Intro_Eu_Limit_Tan}
  P_\Gamma\partial^\bullet_vv+\nabla_\Gamma q = 0, \quad \mathrm{div}_\Gamma v = 0, \quad v\cdot\nu = V_\Gamma^N
\end{align}
and the limit Navier-Stokes system \eqref{E:Intro_NS_Momentum_Limit}--\eqref{E:Intro_NS_Normal_Limit} is equivalent to
\begin{align} \label{E:Intro_NS_Limit_Tan}
  P_\Gamma\partial^\bullet_vv+\nabla_\Gamma q = 2\mu_0P_\Gamma\mathrm{div}_\Gamma(P_\Gamma D^{tan}(v)P_\Gamma), \quad \mathrm{div}_\Gamma v = 0, \quad v\cdot\nu = V_\Gamma^N.
\end{align}
We note that these tangential surface fluid systems were also derived in~\cite{JOR17pre,KLG16} recently.
The derivation of the Navier-Stokes equations on a moving surface in~\cite{JOR17pre} is based on local conservation laws of mass and linear momentum for a surface fluid.
On the other hand, the authors of~\cite{KLG16} applied a global energetic variational approach to derive several kinds of equations for an incompressible fluid on an evolving surface.

The viscous term $2\mu_0\mathrm{div}_\Gamma(P_\Gamma D^{tan}(v)P_\Gamma)$ in the momentum equation \eqref{E:Intro_NS_Momentum_Limit} of the limit Navier-Stokes system appears in the Boussinesq-Scriven surface fluid model which was first described by Boussinesq~\cite{B1913} and generalized by Scriven~\cite{S60} to an arbitrary curved moving surface (see also~\cite[Chapter~10]{Ari89bk} for derivation of the Boussinesq-Scriven surface fluid model).
In~\cite{ADe09} the Boussinesq-Scriven surface fluid model was considered to formulate a continuum model for fluid membranes in a bulk fluid, which contains equations for a viscous fluid on a curved moving surface, and study the effect of membrane viscosity in the dynamics of fluid membranes.
It was also studied in the context of two-phase flows~\cite{BGN15,BP10,NVW12} in which equations for a surface fluid are considered as the boundary condition on a fluid interface.

Since we consider an incompressible fluid on a moving surface or in its tubular neighborhood, some constraints on the motion of the surface are necessary.
For the existence of a surface incompressible fluid it is required that the area of the moving surface is preserved in time.
To consider a bulk incompressible fluid in the $\varepsilon$-tubular neighborhood of the moving surface for all $\varepsilon>0$ sufficiently small, we need another constraint on the moving surface besides the area preserving condition.
However, it is automatically satisfied by the Gauss-Bonnet theorem and the assumption that the moving surface does not change its topology.
See Remark~\ref{R:Incompressibility} for details.

When the surface does not move in time, our tangential limit system \eqref{E:Intro_Eu_Limit_Tan} of the Euler equations is the same as the Euler system on a fixed manifold derived by Arnol$'$d~\cite{A66,A89bk}, who applied the Lie group of diffeomorphisms of a manifold (see also Ebin and Marsden~\cite{EM70}).
Also, for a stationary surface our tangential limit system \eqref{E:Intro_NS_Limit_Tan} of the Navier-Stokes equations is the same as the Navier-Stokes system on a manifold derived by Taylor~\cite{T92}, although the authors of~\cite{KLG16} claim that \eqref{E:Intro_NS_Limit_Tan} is different from Taylor's system (see Remark~\ref{R:Deformation_Tensor}).
For detailed comparison of our limit systems and the systems derived in previous works see Remarks~\ref{R:Eu_Limit_Comparison} and~\ref{R:NS_Limit_Comparison}.
We further note that the function $q^1$ in the limit momentum equations \eqref{E:Intro_Eu_Momentum_Limit} and \eqref{E:Intro_NS_Momentum_Limit}, which is determined by the normal component of these equations, does not vanish even if the surface is stationary.
See Remarks~\ref{R:Eu_Limit_Comparison} and~\ref{R:NS_Limit_Comparison} for details.

Finally we note that our results are based on formal calculations and thus mathematical justification is required.
There are a few works that present rigorous derivation of limit equations in the case where a degenerate set is a hypersurface or a manifold.
Temam and Ziane~\cite{TZ97} derived limit equations for the Navier-Stokes equations in a thin spherical domain by characterizing the thin width limit of a solution to the original equations as a solution to the limit equations.
In~\cite{PRR02}, Prizzi, Rinaldi, and Rybakowski compared the dynamics of a reaction-diffusion equation in a thin domain and that of a limit equation when a thin domain degenerates into a lower dimensional manifold.
Recently, the present author derived a limit equation of the heat equation in a moving thin domain shrinking to a moving closed hypersurface by characterization of the thin width limit of a solution~\cite{M17}.
Although there are several tools and methods introduced in the above papers, it seems that mathematical justification of our results is difficult because of the nonlinearity of the equations and the evolution of the shape of the degenerate surface, and that we need some new techniques.

This paper is organized as follows.
In Section~\ref{S:Preliminaries} we give notations and formulas on quantities related to a moving surface and a moving thin domain.
In Sections~\ref{S:Limit_Eu} and~\ref{S:Limit_NS} we derive the limit equations of the Euler and Navier-Stokes equations in a moving thin domain, respectively.
In Section~\ref{S:Energy_Identity} we derive the energy identities of the Euler and Navier-Stokes equations and the corresponding limit equations and investigate relations between them.
In Appendices~\ref{S:Appendix_A} and~\ref{S:Appendix_B} we give proofs of lemmas in Section~\ref{S:Preliminaries} involving the differential geometry of a surface embedded in the Euclidean space.

\section{Preliminaries} \label{S:Preliminaries}
We fix notations on various quantities of a moving surface and give formulas on them.
All functions appearing in this section are assumed to be sufficiently smooth.

Lemmas in this section are proved by straightforward calculations.
To avoid making this section too long we give proofs of them in Appendix~\ref{S:Appendix_A}, except for the proofs of Lemmas~\ref{L:Riemannian_Connection} and~\ref{L:Compare_Viscous}.
Also, a proof of the formula \eqref{E:Bochner_Beltrami} in Lemma~\ref{L:Riemannian_Connection} is given in Appendix~\ref{S:Appendix_B}.
Although we are concerned with a two-dimensional surface in this paper, all notations and formulas in this section apply to hypersurfaces of any dimension with easy modifications.

\subsection{Moving surfaces and moving thin domains}
Let $\Gamma(t)$, $t\in[0,T]$ be a two-dimensional closed (i.e. compact and without boundary), connected, and oriented moving surface in $\mathbb{R}^3$.
The unit outward normal vector and the (scalar) outward normal velocity of $\Gamma(t)$ are denoted by $\nu(\cdot,t)$ and $V_\Gamma^N(\cdot,t)$, respectively.
Also, let $S_T:=\bigcup_{t\in(0,T)}\Gamma(t)\times\{t\}$ be a space-time hypersurface associated with $\Gamma(t)$.
We assume that $\Gamma(t)$ is smooth at each $t\in[0,T]$ and moves smoothly in time.
In particular, $\Gamma(t)$ does not change its topology.
By the smoothness assumption on $\Gamma(t)$, the (outward) principal curvatures $\kappa_1(\cdot,t)$ and $\kappa_2(\cdot,t)$ of $\Gamma(t)$ are bounded uniformly with respect to $t$.
Hence there is a tubular neighborhood
\begin{align*}
  N(t) := \{x\in\mathbb{R}^3 \mid \mathrm{dist}(x,\Gamma(t)) < \delta\}
\end{align*}
of radius $\delta>0$ independent of $t$ that admits the normal coordinate system
\begin{align} \label{E:N_Coordinate}
  x = \pi(x,t)+d(x,t)\nu(\pi(x,t),t), \quad x\in N(t),
\end{align}
where $\pi(\cdot,t)$ is the closest point mapping onto $\Gamma(t)$ and $d(\cdot,t)$ is the signed distance function from $\Gamma(t)$ (see e.g.~\cite[Lemma~2.8]{DE13}).
Moreover, the mapping $\pi$ and the signed distance $d$ are smooth in the closure (in $\mathbb{R}^4$) of a space-time noncylindrical domain $N_T := \bigcup_{t\in(0,T)}N(t)\times\{t\}$.
We assume that $d(\cdot,t)$ increases in the direction of $\nu(\cdot,t)$.
Therefore,
\begin{align}
  \nabla d(x,t) &= \nu(\pi(x,t),t), \quad (x,t) \in N_T, \label{E:Nabla_Dist}\\
  \partial_td(y,t) &= -V_\Gamma^N(y,t), \quad (y,t) \in S_T. \label{E:Dt_Dist_Surface}
\end{align}
Moreover, differentiating both sides of
\begin{align*}
  d(x,t) = \{x-\pi(x,t)\}\cdot\nabla d(x,t), \quad d(\pi(x,t),t) = 0
\end{align*}
with respect to $t$ and using \eqref{E:Nabla_Dist} and \eqref{E:Dt_Dist_Surface} we easily get
\begin{align} \label{E:Dt_Dist_NBD}
  \partial_td(x,t) = \partial_td(\pi(x,t),t) = -V_\Gamma^N(\pi(x,t),t), \quad (x,t)\in N_T.
\end{align}
For a sufficiently small $\varepsilon>0$ we define a moving thin domain $\Omega_\varepsilon(t)$ in $\mathbb{R}^3$ as
\begin{align*}
  \Omega_\varepsilon(t) := \{x\in\mathbb{R}^3 \mid \mathrm{dist}(x,\Gamma(t))<\varepsilon\}
\end{align*}
and a space-time noncylindrical domain $Q_{\varepsilon,T}$ and its lateral boundary $\partial_\ell Q_{\varepsilon,T}$ as
\begin{align*}
  Q_{\varepsilon,T} := \bigcup_{t\in(0,T)}\Omega_\varepsilon(t)\times\{t\}, \quad \partial_\ell Q_{\varepsilon,T} := \bigcup_{t\in(0,T)}\partial\Omega_\varepsilon(t)\times\{t\}.
\end{align*}
Since $\Omega_\varepsilon(t)$ is a tubular neighborhood of $\Gamma(t)$, the unit outward normal vector $\nu_\varepsilon(\cdot,t)$ and the outward normal velocity $V_\varepsilon^N(\cdot,t)$ of its boundary are given by
\begin{align}
  \begin{split} \label{E:N_Vect_Dom}
    \nu_\varepsilon(x,t) &=
    \begin{cases}
      \nu(\pi(x,t),t) &\text{if}\quad d(x,t) = \varepsilon, \\
      -\nu(\pi(x,t),t) &\text{if}\quad d(x,t) = -\varepsilon,
    \end{cases}
  \end{split} \\
  \begin{split} \label{E:N_Vel_Dom}
    V_\varepsilon^N(x,t) &=
    \begin{cases}
      V_\Gamma^N(\pi(x,t),t) &\text{if}\quad d(x,t) = \varepsilon, \\
      -V_\Gamma^N(\pi(x,t),t) &\text{if}\quad d(x,t) = -\varepsilon.
    \end{cases}
  \end{split}
\end{align}

\subsection{Notations and formulas for quantities on fixed surfaces}
In this subsection we fix and suppress the time $t\in[0,T]$.
Hence $\Gamma$ denotes a two-dimensional closed, connected, oriented and smooth surface in $\mathbb{R}^3$.
Let us give notations and formulas for several quantities on the fixed surface $\Gamma$.
(In the sequel we use the same notations given in this subsection for the moving surface $\Gamma(t)$.)
Let $P_\Gamma$ be the orthogonal projection onto the tangent plane of $\Gamma$ at each point on $\Gamma$ given by
\begin{align*}
  P_\Gamma(y) := I_3-\nu(y)\otimes\nu(y), \quad y\in\Gamma,
\end{align*}
where $I_3$ is the identity matrix of three dimension and $a\otimes b$ for $a,b\in\mathbb{R}^3$ denotes the tensor product of $a$ and $b$ given by
\begin{align*}
  a\otimes b :=
  \begin{pmatrix}
    a_1b_1 & a_1b_2 & a_1b_3 \\
    a_2b_1 & a_2b_2 & a_2b_3 \\
    a_3b_1 & a_3b_2 & a_3b_3
  \end{pmatrix}, \quad a=(a_1,a_2,a_3),\,b=(b_1,b_2,b_3).
\end{align*}
For a function $f$ on $\Gamma$ we define its tangential gradient $\nabla_\Gamma f$ as
\begin{align*}
  \nabla_\Gamma f(y) := P_\Gamma(y)\nabla\tilde{f}(y), \quad y\in\Gamma.
\end{align*}
Here $\tilde{f}$ is an extension of $f$ to $N$ satisfying $\tilde{f}|_\Gamma=f$.
Note that the tangential gradient of $f$ is independent of the choice of its extension (see e.g.~\cite[Lemma~2.4]{DE13}).
Also, it is easy to see that $\nabla_\Gamma f\cdot\nu = 0$ and $P_\Gamma\nabla_\Gamma f=\nabla_\Gamma f$ hold on $\Gamma$.
The tangential derivative operators are given by
\begin{align*}
  \partial_i^{tan} f(y) := \sum_{j=1}^3\{\delta_{ij}-\nu_i(y)\nu_j(y)\}\partial_j\tilde{f}(y), \quad i=1,2,3
\end{align*}
so that $\nabla_\Gamma=(\partial_1^{tan},\partial_2^{tan},\partial_3^{tan})$, which are again independent of the choice of an extension $\tilde{f}$ of $f$.
For example, we may take the constant extension in the normal direction of $\Gamma$ given by $\bar{f}(x):=f(\pi(x))$ for $x\in N$.

For vector fields $F=(F_1,F_2,F_3)$ on $N$ and $G=(G_1,G_2,G_3)$ on $\Gamma$, we define the gradient matrix and the divergence of $F$ as
\begin{align*}
  \nabla F :=
  \begin{pmatrix}
    \partial_1F_1 & \partial_1F_2 & \partial_1F_3 \\
    \partial_2F_1 & \partial_2F_2 & \partial_2F_3 \\
    \partial_3F_1 & \partial_3F_2 & \partial_3F_3
  \end{pmatrix}, \quad
  \mathrm{div}\,F := \sum_{i=1}^3\partial_iF_i
\end{align*}
and the tangential gradient matrix and the surface divergence of $G$ as
\begin{align*}
  \nabla_\Gamma G :=
  \begin{pmatrix}
    \partial_1^{tan}G_1 & \partial_1^{tan}G_2 & \partial_1^{tan}G_3 \\
    \partial_2^{tan}G_1 & \partial_2^{tan}G_2 & \partial_2^{tan}G_3 \\
    \partial_3^{tan}G_1 & \partial_3^{tan}G_2 & \partial_3^{tan}G_3
  \end{pmatrix}, \quad
  \mathrm{div}_\Gamma G := \sum_{i=1}^3\partial_i^{tan}G_i.
\end{align*}
These notations are consistent with the formula $\nabla_\Gamma G=P_\Gamma\nabla\widetilde{G}$ on $\Gamma$, where $\widetilde{G}$ is an arbitrary extension of $G$ to $N$ with $\widetilde{G}|_\Gamma=G$.
For a function $f$ on $\Gamma$ we denote by $\nabla_\Gamma^2f$ the tangential Hessian matrix of $f$ whose $(i,j)$-entry is given by $\partial_i^{tan}\partial_j^{tan}f$ ($i,j=1,2,3$).
Let $M$ be a $3\times3$ matrix-valued function defined on $N$ or on $\Gamma$ of the form
\begin{align*}
  M = (M_{ij})_{i,j} =
  \begin{pmatrix}
    M_{11} & M_{12} & M_{13} \\
    M_{21} & M_{22} & M_{23} \\
    M_{31} & M_{32} & M_{33}
  \end{pmatrix}.
\end{align*}
We define the divergence $\mathrm{div}\,M$ on $N$ or the surface divergence $\mathrm{div}_\Gamma M$ on $\Gamma$ as a vector field whose $j$-th component is given by
\begin{align*}
  [\mathrm{div}\,M]_j := \sum_{i=1}^3\partial_iM_{ij} \quad\text{or}\quad [\mathrm{div}_\Gamma M]_j := \sum_{i=1}^3\partial_i^{tan}M_{ij}, \quad j=1,2,3.
\end{align*}
Finally we set
\begin{gather*}
  A := -\nabla_\Gamma\nu = (-\partial_i^{tan}\nu_j)_{i,j},\quad \Delta_\Gamma := \mathrm{div}_\Gamma\nabla_\Gamma = \sum_{i=1}^3(\partial_i^{tan})^2, \\
  H := -\mathrm{div}_\Gamma\nu =  \mathrm{tr}[A], \quad K := \kappa_1\kappa_2
\end{gather*}
and call them the Weingarten map of $\Gamma$, the Laplace-Beltrami operator on $\Gamma$, (twice) the mean curvature of $\Gamma$, and the Gaussian curvature of $\Gamma$, respectively.
The usual Laplacian $\Delta$ and the Laplace-Beltrami operator $\Delta_\Gamma$ acting on vector fields are understood to be componentwise operators.

\begin{lemma} \label{L:Weingarten}
  For all $y\in \Gamma$ we have
  \begin{gather}
    A(y)\nu(y) = 0, \label{E:W_Normal} \\
    A(y)P_\Gamma(y) = P_\Gamma(y)A(y) = A(y), \label{E:W_OrthProj} \\
    A(y) = -\nabla^2d(y). \label{E:W_Hessian}
  \end{gather}
\end{lemma}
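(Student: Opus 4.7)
The plan is to verify the three identities \eqref{E:W_Normal}, \eqref{E:W_OrthProj}, \eqref{E:W_Hessian} in that order, using only the definition $A_{ij} = -\partial_i^{tan}\nu_j$, the unit-length constraint $|\nu|^2 \equiv 1$ on $\Gamma$, and the identity $\nabla d = \nu\circ\pi$ from \eqref{E:Nabla_Dist}. No further machinery is needed, and I do not anticipate any essential obstacle.

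For \eqref{E:W_Normal}, I would write $(A\nu)_i = -\sum_j (\partial_i^{tan}\nu_j)\nu_j = -\tfrac{1}{2}\partial_i^{tan}|\nu|^2$ and invoke $|\nu|^2 \equiv 1$ on $\Gamma$ to conclude $A\nu = 0$. The identity $AP_\Gamma = A$ in \eqref{E:W_OrthProj} is then immediate from $P_\Gamma = I_3 - \nu\otimes\nu$ via $AP_\Gamma = A - (A\nu)\otimes\nu = A$.

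For the remaining identity $P_\Gamma A = A$, the task is to show that every column of $A$ is tangential to $\Gamma$, i.e.\ $\sum_i \nu_i A_{ij} = 0$ for $j=1,2,3$. Choosing the constant normal extension $\bar\nu(x) := \nu(\pi(x))$, I would expand $\partial_i^{tan}\nu_j = \sum_k(\delta_{ik} - \nu_i\nu_k)\partial_k\bar\nu_j$ and sum against $\nu_i$; the expression collapses to $(1-|\nu|^2)\sum_k \nu_k\partial_k\bar\nu_j$, which vanishes on $\Gamma$.

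Finally, for \eqref{E:W_Hessian}, \eqref{E:Nabla_Dist} gives $\nabla d = \bar\nu$, so $(\nabla^2 d)_{ij} = \partial_i\bar\nu_j$. Since $\pi(y+s\nu(y)) = y$ for small $s$, the extension $\bar\nu$ is constant along each normal line, so its normal derivative vanishes on $\Gamma$; this forces $\partial_i\bar\nu_j = \partial_i^{tan}\bar\nu_j = \partial_i^{tan}\nu_j = -A_{ij}$ on $\Gamma$, yielding $\nabla^2 d = -A$. The only small subtlety across the argument is the row-versus-column distinction in \eqref{E:W_OrthProj}: $AP_\Gamma = A$ follows trivially from \eqref{E:W_Normal}, whereas $P_\Gamma A = A$ genuinely needs the componentwise computation above.
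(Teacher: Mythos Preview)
Your proof is correct and close to the paper's, with one simplification you missed and one minor difference in packaging.

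For \eqref{E:W_Normal} you argue exactly as the paper does. For \eqref{E:W_OrthProj}, however, the ``subtlety'' you flag is illusory: the identity $P_\Gamma A=A$ needs no componentwise computation at all, because $A=-\nabla_\Gamma\nu$ and the paper has already recorded that $P_\Gamma\nabla_\Gamma f=\nabla_\Gamma f$ for \emph{any} $f$ (this is built into the definition $\nabla_\Gamma=P_\Gamma\nabla$). Applying this columnwise to $\nu$ gives $P_\Gamma A=A$ immediately. Your expansion collapsing to $(1-|\nu|^2)\sum_k\nu_k\partial_k\bar\nu_j$ is correct but simply re-proves that $\nu\cdot\nabla_\Gamma f=0$. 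The paper accordingly dispatches \eqref{E:W_OrthProj} in one line as an ``immediate consequence'' of \eqref{E:W_Normal}.

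For \eqref{E:W_Hessian} the paper takes a slightly different route: rather than invoking that $\bar\nu$ is constant along normal lines, it differentiates the normal-coordinate identity $\pi(x)=x-d(x)\nabla d(x)$ to obtain $\nabla\pi(y)=P_\Gamma(y)$ on $\Gamma$, and then reads off $\nabla^2 d(y)=\nabla\pi(y)\,(\nabla\tilde\nu)(y)=P_\Gamma\nabla\tilde\nu(y)=\nabla_\Gamma\nu(y)=-A(y)$ via the chain rule. Your argument and the paper's are equivalent---both amount to showing that the normal derivative of $\nu\circ\pi$ vanishes on $\Gamma$---but the paper's version has the incidental benefit of computing $\nabla\pi$ on $\Gamma$, which it reuses later (e.g.\ in the expansion \eqref{E:Nabla_Proj}).
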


By \eqref{E:W_Normal} we see that $A$ has the eigenvalue $0$.
Note that the other eigenvalues of $A$ are $\kappa_1$ and $\kappa_2$ (see e.g.~\cite[Section VII.5]{KN96bk}) and thus
\begin{align} \label{E:Sum_Curvature}
  H(y) = \kappa_1(y)+\kappa_2(y), \quad y\in\Gamma.
\end{align}
Also, $A$ is symmetric (i.e. $\partial_i^{tan}\nu_j=\partial_j^{tan}\nu_i$) and $H=-\Delta d$ holds on $\Gamma$ by \eqref{E:W_Hessian}.

The tangential derivatives $\partial_i^{tan}$ ($i=1,2,3$) are noncommutative in general.
An exchange formula for them includes the unit outward normal of the surface.

\begin{lemma} \label{L:Exchange_Derivative}
  Let $f$ be a function on $\Gamma$.
  For each $i,j=1,2,3$ we have
  \begin{align} \label{E:Exchange_Derivative}
    \partial_i^{tan}\partial_j^{tan}f-\partial_j^{tan}\partial_i^{tan}f = [A\nabla_\Gamma f]_i\nu_j-[A\nabla_\Gamma f]_j\nu_i.
  \end{align}
  Here $[A\nabla_\Gamma f]_i$ denotes the $i$-th component of the vector field $A\nabla_\Gamma f$.
\end{lemma}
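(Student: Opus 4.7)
The plan is to work with the constant normal extension $\bar f(x):=f(\pi(x))$ on $N$, which is the most convenient extension because the normal direction $\nu=\nabla d$ itself extends naturally to $N$ and $\bar f$ is by construction constant along normal rays. From this I will obtain the key identity $\nu\cdot\nabla\bar f=0$ on $N$ (not merely on $\Gamma$), differentiate it, and then assemble the commutator.

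\medskip

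\textbf{Step 1: Reduction to the ambient extension.} Since $\bar f$ is constant along normal rays to $\Gamma$ and $\nu(x)=\nabla d(x)=\nu(\pi(x),t)$ by \eqref{E:Nabla_Dist}, I first check $\nu\cdot\nabla\bar f\equiv 0$ throughout $N$. Hence $P_\Gamma\nabla\bar f=\nabla\bar f$ on $\Gamma$, so
\[
\partial_i^{tan}f=\partial_i\bar f\big|_\Gamma,\qquad
\partial_i^{tan}\partial_j^{tan}f=\partial_i\partial_j\bar f\big|_\Gamma-\nu_i\sum_{k=1}^3\nu_k\partial_k\partial_j\bar f\big|_\Gamma,
\]
the second formula coming from applying the definition of $\partial_i^{tan}$ to the function $\partial_j^{tan}f=\partial_j\bar f|_\Gamma$, which I extend off $\Gamma$ by $\partial_j\bar f$ on $N$.

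\medskip

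\textbf{Step 2: Compute the commutator.} Subtracting the analogous formula with $i,j$ swapped, the symmetric term $\partial_i\partial_j\bar f$ cancels, leaving
\[
\partial_i^{tan}\partial_j^{tan}f-\partial_j^{tan}\partial_i^{tan}f
=\nu_j\sum_{k}\nu_k\partial_k\partial_i\bar f-\nu_i\sum_{k}\nu_k\partial_k\partial_j\bar f\qquad\text{on }\Gamma.
\]
Now differentiating $\nu\cdot\nabla\bar f=0$ (valid on $N$) with respect to $x_j$ gives $\sum_k\nu_k\partial_k\partial_j\bar f=-\sum_k(\partial_j\nu_k)(\partial_k\bar f)$ on $N$, and restricting to $\Gamma$ yields
\[
\partial_i^{tan}\partial_j^{tan}f-\partial_j^{tan}\partial_i^{tan}f
=\nu_i\sum_k(\partial_j\nu_k)(\partial_k\bar f)-\nu_j\sum_k(\partial_i\nu_k)(\partial_k\bar f).
\]

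\medskip

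\textbf{Step 3: Convert ambient into tangential derivatives of $\nu$.} The eikonal equation $|\nabla d|^2=1$, differentiated in $x_k$, gives $\sum_l(\partial_l d)(\partial_l\partial_k d)=0$, i.e.\ $\sum_l\nu_l\partial_l\nu_k=0$. Consequently $\partial_j\nu_k=\partial_j^{tan}\nu_k$ on $\Gamma$, and $\partial_k\bar f|_\Gamma=(\nabla_\Gamma f)_k$. Using $A_{jk}=-\partial_j^{tan}\nu_k$ and $[A\nabla_\Gamma f]_j=\sum_k A_{jk}(\nabla_\Gamma f)_k$, the right-hand side becomes exactly $[A\nabla_\Gamma f]_i\nu_j-[A\nabla_\Gamma f]_j\nu_i$, giving \eqref{E:Exchange_Derivative}.

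\medskip

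The argument is essentially bookkeeping once the right extension is chosen; the only substantive obstacle is the identity $\nu\cdot\nabla\bar f=0$ \emph{on all of $N$}, not merely on $\Gamma$. Without this off-surface extension of the orthogonality relation, differentiating the relation would produce extra uncontrolled terms. Once this is in hand, the eikonal equation supplies the step converting the ambient derivative $\partial_j\nu_k$ into $\partial_j^{tan}\nu_k=-A_{jk}$, and the formula drops out.
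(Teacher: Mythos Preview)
Your proof is correct. The approach, however, differs from the paper's in a meaningful way.

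The paper works with an arbitrary extension $\tilde f$ and expands $\partial_i^{tan}\partial_j^{tan}f$ directly from the definition $\partial_i^{tan}=\sum_k(\delta_{ik}-\partial_id\,\partial_kd)\partial_k$, obtaining three terms $\alpha_1,\alpha_2,\alpha_3$ and similarly $\beta_1,\beta_2,\beta_3$ for $\partial_j^{tan}\partial_i^{tan}f$. It then shows $\alpha_1=\beta_1$ by symmetry of the Hessian of $\tilde f$, $\alpha_2=\beta_2$ using the eikonal identity $|\nabla d|^2=1$, and identifies $\alpha_3-\beta_3$ with the right-hand side of the formula via \eqref{E:W_Hessian}. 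Your argument instead fixes the constant normal extension $\bar f$ from the start, so that the identity $\nu\cdot\nabla\bar f=0$ holds on all of $N$; this collapses the projection in the first tangential derivative ($\partial_j^{tan}f=\partial_j\bar f|_\Gamma$) and reduces the commutator to a single pair of terms, after which differentiating $\nu\cdot\nabla\bar f=0$ and the eikonal equation finish the job. The paper's route is extension-agnostic but carries more bookkeeping; your route is shorter and more conceptual, at the cost of committing to a particular extension and needing the off-surface validity of $\nu\cdot\nabla\bar f=0$, which you correctly flagged as the essential point.
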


The next formula is a consequence of \eqref{E:Exchange_Derivative}, which we use in Section~\ref{S:Limit_NS} to express a viscous term of limit equations of the Navier-Stokes equations in terms of the Laplace-Beltrami operator.
For a vector field $v$ on $\Gamma$ we set
\begin{align} \label{E:Tan_Strain}
  D^{tan}(v) := \frac{\nabla_\Gamma v+(\nabla_\Gamma v)^T}{2}.
\end{align}
The matrices $D^{tan}(v)$ and $P_\Gamma D^{tan}(v)P_\Gamma$ are called a tangential strain rate and a projected strain rate in~\cite{KLG16}, respectively.

\begin{lemma} \label{L:Viscous_General}
  Let $v$ be a (not necessarily tangential) vector field on $\Gamma$.
  Then
  \begin{align} \label{E:Viscous_General}
    2\mathrm{div}_\Gamma(P_\Gamma D^{tan}(v)P_\Gamma) = 2\mathrm{tr}[A\nabla_\Gamma v]\nu+P_\Gamma(\Delta_\Gamma v)+\nabla_\Gamma(\mathrm{div}_\Gamma v)+H(\nabla_\Gamma v)\nu
  \end{align}
  holds on $\Gamma$ (note that $(\nabla_\Gamma v)\nu=P_\Gamma(\nabla_\Gamma v)\nu$ on the right-hand side is tangential).
\end{lemma}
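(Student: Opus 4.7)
The plan is to expand the projected strain rate, reduce the computation of its surface divergence to three pieces (the bare $D^{tan}(v)$, the two symmetric rank-one ``cross'' corrections, and a scalar multiple of $\nu\otimes\nu$), and then use the exchange formula \eqref{E:Exchange_Derivative} together with the identities $A\nu=0$, $A=A^T$, $H=-\mathrm{div}_\Gamma\nu$, and $\nu\cdot\nabla_\Gamma f=0$ for any scalar $f$ on $\Gamma$. Concretely, since $P_\Gamma=I_3-\nu\otimes\nu$, one has
\begin{align*}
  P_\Gamma D^{tan}(v)P_\Gamma = D^{tan}(v) - (D^{tan}(v)\nu)\otimes\nu - \nu\otimes(D^{tan}(v)\nu) + (\nu\cdot D^{tan}(v)\nu)\,\nu\otimes\nu.
\end{align*}
Because each column of $\nabla_\Gamma v$ is tangential, $(\nabla_\Gamma v)^T\nu=0$, so $D^{tan}(v)\nu=\tfrac12(\nabla_\Gamma v)\nu$, and this vector is itself tangential; in particular $\nu\cdot D^{tan}(v)\nu=0$ and the last term drops out. (This is also where the parenthetical remark in the statement gets justified.)

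Next I would compute $2\,\mathrm{div}_\Gamma D^{tan}(v)$ componentwise. The symmetric piece of $D^{tan}(v)$ contributes $\tfrac12[\Delta_\Gamma v]_l$ directly, while the transposed piece contributes $\tfrac12\sum_k\partial_k^{tan}\partial_l^{tan}v_k$. This is exactly the place where the exchange formula \eqref{E:Exchange_Derivative} is applied, swapping $\partial_k^{tan}\partial_l^{tan}$ into $\partial_l^{tan}\partial_k^{tan}$ and producing $\nu_l\mathrm{tr}[A\nabla_\Gamma v]$ and $-[A(\nabla_\Gamma v)\nu]_l$ as remainder terms (using the symmetry of $A$ to rewrite $\sum_k\nu_k A_{lj}\partial_j^{tan}v_k=[A(\nabla_\Gamma v)\nu]_l$). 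Summing up gives
\begin{align*}
  2\,\mathrm{div}_\Gamma D^{tan}(v) = \Delta_\Gamma v+\nabla_\Gamma(\mathrm{div}_\Gamma v)+\mathrm{tr}[A\nabla_\Gamma v]\nu-A(\nabla_\Gamma v)\nu.
\end{align*}

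For the cross terms, the Leibniz rule together with the identities recalled above give, after a short calculation,
\begin{align*}
  \mathrm{div}_\Gamma\bigl(((\nabla_\Gamma v)\nu)\otimes\nu\bigr)
  &= \nu\,\mathrm{div}_\Gamma((\nabla_\Gamma v)\nu)-A(\nabla_\Gamma v)\nu,\\
  \mathrm{div}_\Gamma\bigl(\nu\otimes((\nabla_\Gamma v)\nu)\bigr)
  &= -H(\nabla_\Gamma v)\nu,
\end{align*}
the second identity using that $\sum_k\nu_k\partial_k^{tan}[(\nabla_\Gamma v)\nu]_l=0$ because the gradient of any scalar function is tangential to $\Gamma$. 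A further direct expansion yields $\mathrm{div}_\Gamma((\nabla_\Gamma v)\nu)=\nu\cdot\Delta_\Gamma v-\mathrm{tr}[A\nabla_\Gamma v]$, where $A=A^T$ is used to identify $\sum_{i,j}A_{ij}(\nabla_\Gamma v)_{ij}$ with $\mathrm{tr}[A\nabla_\Gamma v]$.

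Finally, substituting these expressions into $2\,\mathrm{div}_\Gamma(P_\Gamma D^{tan}(v)P_\Gamma)$ and collecting terms, the two $\pm A(\nabla_\Gamma v)\nu$ contributions cancel, the $-\nu(\nu\cdot\Delta_\Gamma v)$ combines with $\Delta_\Gamma v$ into $P_\Gamma\Delta_\Gamma v$, and the two copies of $\mathrm{tr}[A\nabla_\Gamma v]\nu$ combine to give the factor $2$. The remaining $H(\nabla_\Gamma v)\nu$ survives with the correct sign, producing exactly \eqref{E:Viscous_General}. The main obstacle is not any single identity but the bookkeeping required to keep track of which derivative falls on $\nu$ versus on $v$ (each $\partial^{tan}\nu$ generating a factor $-A$) and to verify that the exchange-formula remainders in the $D^{tan}(v)$ calculation line up precisely with the $\mathrm{div}_\Gamma((\nabla_\Gamma v)\nu)$ contribution so that the normal component of $\Delta_\Gamma v$ is absorbed into the projection $P_\Gamma$.
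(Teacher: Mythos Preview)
Your argument is correct and leads to \eqref{E:Viscous_General}, but your decomposition differs from the paper's. You expand $P_\Gamma D^{tan}(v)P_\Gamma$ as $D^{tan}(v)$ minus the two rank-one cross terms $\tfrac12((\nabla_\Gamma v)\nu)\otimes\nu$ and $\tfrac12\,\nu\otimes((\nabla_\Gamma v)\nu)$, compute the surface divergence of each piece, and then watch the $\pm A(\nabla_\Gamma v)\nu$ terms cancel while the normal part of $\Delta_\Gamma v$ gets absorbed into the projection. The paper instead uses $P_\Gamma\nabla_\Gamma v=\nabla_\Gamma v$ and $(\nabla_\Gamma v)^TP_\Gamma=(\nabla_\Gamma v)^T$ to write $2P_\Gamma D^{tan}(v)P_\Gamma=(\nabla_\Gamma v)P_\Gamma+P_\Gamma(\nabla_\Gamma v)^T$ and then computes $\mathrm{div}_\Gamma$ of each summand separately by a direct index calculation, invoking the exchange formula \eqref{E:Exchange_Derivative} only in the second summand. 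Your route isolates the geometric content more cleanly (the rank-one corrections and the identity $\mathrm{div}_\Gamma((\nabla_\Gamma v)\nu)=\nu\cdot\Delta_\Gamma v-\mathrm{tr}[A\nabla_\Gamma v]$ are reusable observations), whereas the paper's split avoids ever forming $\mathrm{div}_\Gamma D^{tan}(v)$ and keeps each projector attached to only one factor, which makes the index bookkeeping a bit more mechanical. Both proofs hinge on the same two ingredients, the exchange formula and $A\nu=0$ with $A=A^T$.
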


To compare our limit systems with the incompressible fluid systems on a fixed manifold derived by Arnol$'$d~\cite{A66,A89bk} and Taylor~\cite{T92} we need formulas on the Levi-Civita connection.
Let $\overline{\nabla}$ be the Levi-Civita connection on $\Gamma$ with respect to the metric on $\Gamma$ induced by the Euclidean metric of $\mathbb{R}^3$ (see e.g.~\cite[Section~2.3]{C15bk} and~\cite[Sections~3.3.1 and~4.1.2]{N07bk} for the definition of the Levi-Civita connection).
Hence for tangential vector fields $X$ and $Y$ on $\Gamma$ the covariant derivative of $X$ along $Y$ is denoted by $\overline{\nabla}_YX$, which is again a tangential vector field on $\Gamma$.
The Levi-Civita connection is considered as a mapping
\begin{align*}
  \overline{\nabla}\colon C^\infty(T\Gamma) \to C^\infty(T^\ast\Gamma\otimes T\Gamma), \quad X \mapsto \overline{\nabla} X,
\end{align*}
where $T\Gamma$ and $T^\ast\Gamma$ are the tangent and cotangent bundle of $\Gamma$, respectively, and for a vector bundle $E$ over $\Gamma$ we denote by $C^\infty(E)$ the set of all smooth sections of $E$.
(Hence $C^\infty(T\Gamma)$ denotes the set of all smooth tangential vector fields on $\Gamma$.
We refer to~\cite[Chapter~10]{L13bk} for the definitions of a vector bundle and a section.)
Also, for a tangential vector field $X$ on $\Gamma$ the notation $\overline{\nabla}X$ stands for a mapping $Y\mapsto\overline{\nabla}_YX$ from $C^\infty(T\Gamma)$ into itself.
Then we write $\overline{\nabla}^\ast\colon C^\infty(T^\ast\Gamma\otimes T\Gamma)\to C^\infty(T\Gamma)$ for the formal adjoint operator of $\overline{\nabla}$ (see~\cite[Section~10.1.3]{N07bk}) and set $\Delta_B := -\overline{\nabla}^\ast\overline{\nabla}$.
The operator $\Delta_B\colon C^\infty(T\Gamma)\to C^\infty(T\Gamma)$ is called the Bochner Laplacian (note that there is another definition of the Bochner Laplacian where the sign is taken opposite).

\begin{lemma} \label{L:Riemannian_Connection}
  Let $X$ and $Y$ are tangential vector fields on $\Gamma$.
  Then
  \begin{align}
    (Y\cdot\nabla)\widetilde{X} &= \overline{\nabla}_YX+(AX\cdot Y)\nu, \label{E:Gauss_Formula}\\
    \Delta_BX &= P_\Gamma(\Delta_\Gamma X)+A^2X \label{E:Bochner_Beltrami}
  \end{align}
  hold on $\Gamma$.
  Here $\widetilde{X}$ is an extension of $X$ to $N$ with $\widetilde{X}|_\Gamma=X$ and $(Y\cdot\nabla)\widetilde{X}$ denotes the directional derivative of $\widetilde{X}$ along $Y$ in $\mathbb{R}^3$, i.e.
  \begin{align*}
    (Y\cdot\nabla)\widetilde{X} = \left(\sum_{i=1}^3Y_i\partial_i\widetilde{X}_1,\,\sum_{i=1}^3Y_i\partial_i\widetilde{X}_2,\,\sum_{i=1}^3Y_i\partial_i\widetilde{X}_3\right).
  \end{align*}
  Also, the left-hand side of \eqref{E:Gauss_Formula} is independent of the choice of the extension $\widetilde{X}$.
\end{lemma}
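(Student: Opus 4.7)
My plan is to handle the two identities separately: \eqref{E:Gauss_Formula} is proved by a direct tangential-normal decomposition using $I_3 = P_\Gamma + \nu\otimes\nu$, and \eqref{E:Bochner_Beltrami} follows by iterating \eqref{E:Gauss_Formula} inside a carefully chosen local orthonormal frame on $\Gamma$.

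For \eqref{E:Gauss_Formula}, both sides are independent of the extension $\widetilde{X}$: on the left because $Y$ is tangential, so $(Y\cdot\nabla)\widetilde{X}|_\Gamma$ only involves tangential derivatives of $X|_\Gamma$. To compute the normal coefficient $\bigl((Y\cdot\nabla)\widetilde{X}\bigr)\cdot\nu$ I would differentiate the identity $\widetilde{X}\cdot\nu\equiv 0$ on $\Gamma$ along $Y$; since $Y$ is tangential, $(Y\cdot\nabla)$ restricted to $\Gamma$ acts as $\sum_i Y_i\partial_i^{tan}$, and the product rule yields $0 = \bigl((Y\cdot\nabla)\widetilde{X}\bigr)\cdot\nu + X\cdot\sum_i Y_i\partial_i^{tan}\nu$ on $\Gamma$. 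The definition $A = -\nabla_\Gamma\nu$ together with the symmetry of $A$ from Lemma~\ref{L:Weingarten} reduces the last sum to $-AY$, so the normal coefficient equals $AX\cdot Y$. To identify the tangential component $P_\Gamma(Y\cdot\nabla)\widetilde{X}$ with $\overline{\nabla}_YX$ I invoke the classical characterization of the Levi-Civita connection on a Riemannian submanifold of Euclidean space as the tangential projection of the ambient flat connection: the map $(X,Y)\mapsto P_\Gamma(Y\cdot\nabla)\widetilde{X}$ is $C^\infty(\Gamma)$-linear in $Y$, a derivation in $X$, torsion-free, and metric-compatible, so by uniqueness of the Levi-Civita connection it must coincide with $\overline{\nabla}$.

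For \eqref{E:Bochner_Beltrami} I fix an arbitrary base point $y_0\in\Gamma$ and choose a local orthonormal tangent frame $\{e_1,e_2\}$ on $\Gamma$ that is geodesic normal at $y_0$, i.e.\ satisfies $\overline{\nabla}_{e_\alpha}e_\beta(y_0)=0$. At $y_0$ the standard local expressions collapse to $\Delta_B X = \sum_\alpha \overline{\nabla}_{e_\alpha}\overline{\nabla}_{e_\alpha} X$ and, componentwise, $\Delta_\Gamma X = \sum_\alpha (e_\alpha\cdot\nabla)(e_\alpha\cdot\nabla)\widetilde{X}$, both reductions using the vanishing of the Christoffel symbols at $y_0$ together with orthonormality of the frame. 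Applying \eqref{E:Gauss_Formula} once gives $(e_\alpha\cdot\nabla)\widetilde{X} = \overline{\nabla}_{e_\alpha}X + (AX\cdot e_\alpha)\nu$, and applying it a second time together with the product rule, the identity $(e_\alpha\cdot\nabla)\nu = -A e_\alpha$ on $\Gamma$, and tangential projection by $P_\Gamma$ produces
\[P_\Gamma(e_\alpha\cdot\nabla)(e_\alpha\cdot\nabla)\widetilde{X} = \overline{\nabla}_{e_\alpha}\overline{\nabla}_{e_\alpha}X - (AX\cdot e_\alpha)Ae_\alpha\]
at $y_0$. Summing over $\alpha$ and using that $AX$ is tangential with $AX = \sum_\alpha (AX\cdot e_\alpha)e_\alpha$, so that $\sum_\alpha (AX\cdot e_\alpha)Ae_\alpha = A^2X$, yields $P_\Gamma\Delta_\Gamma X(y_0) = \Delta_B X(y_0) - A^2X(y_0)$; since $y_0$ is arbitrary the identity holds globally.

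The main obstacle I anticipate lies in the bookkeeping for \eqref{E:Bochner_Beltrami}: one must carefully verify the two frame-dependent reductions at $y_0$ (both requiring geodesic normality and producing extra connection-coefficient terms in a generic frame) and then track which of the many terms generated by iterating \eqref{E:Gauss_Formula} cancel upon tangential projection and which assemble into the clean $A^2X$ correction. An alternative route via Christoffel symbols in arbitrary local coordinates is mechanical but obscures the geometric origin of the $A^2X$ term as the square of the second fundamental form, which is presumably why the authors defer the detailed computation to Appendix~\ref{S:Appendix_B}.
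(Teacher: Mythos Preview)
Your argument is correct. For \eqref{E:Gauss_Formula} the paper omits the proof entirely, citing it as the classical Gauss formula; your tangential--normal decomposition together with the uniqueness characterization of the Levi-Civita connection is the standard route. For \eqref{E:Bochner_Beltrami} you take a genuinely different path from Appendix~\ref{S:Appendix_B}: you localize at a point $y_0$ in a geodesic normal frame, which kills the correction term $\overline{\nabla}_{\overline{\nabla}_{e_\alpha}e_\alpha}X$ in the trace formula for $\Delta_B$ from the outset, and then iterate \eqref{E:Gauss_Formula} and project. The paper instead works in an arbitrary orthonormal frame, keeps the full expression $\Delta_BX=\sum_i(\overline{\nabla}_i\overline{\nabla}_iX-\overline{\nabla}_{\overline{\nabla}_ie_i}X)$, expands each piece component by component in the ambient Cartesian coordinates, and shows that the cross terms $\{(e_i\cdot\nabla_\Gamma)e_i\}\cdot\nabla_\Gamma X_k$ cancel between the two summands; the remainder is reassembled via $\sum_i e_i\otimes e_i=P_\Gamma$. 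Your approach is shorter and more geometric, at the price of invoking the existence of a normal frame; the paper's approach is purely computational and stays within the fixed-Cartesian-coordinate framework of Section~\ref{S:Preliminaries}, which is consistent with its goal of giving a global coordinate expression for $\Delta_B$. One point worth making explicit in your write-up: the reduction $\Delta_\Gamma X_j=\sum_\alpha(e_\alpha\cdot\nabla_\Gamma)(e_\alpha\cdot\nabla_\Gamma)X_j$ at $y_0$ holds because the residual term $\sum_\alpha\{(e_\alpha\cdot\nabla_\Gamma)e_\alpha\}\cdot\nabla_\Gamma X_j$ vanishes there---by \eqref{E:Gauss_Formula} and geodesic normality, $(e_\alpha\cdot\nabla_\Gamma)e_\alpha(y_0)=(Ae_\alpha\cdot e_\alpha)\nu$ is purely normal while $\nabla_\Gamma X_j$ is tangential.
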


The formula \eqref{E:Gauss_Formula} is well-known as the Gauss formula (see e.g.~\cite[Section~4.2]{C15bk} and~\cite[Section~VII.3]{KN96bk}) and we omit its proof.
Note that $(Y\cdot\nabla)\widetilde{X}=(Y\cdot\nabla_\Gamma)X$ on $\Gamma$ since $Y$ is tangential.
Hence the Gauss formula \eqref{E:Gauss_Formula} is also expressed as
\begin{align} \label{E:Gauss_Formula_Another}
  (Y\cdot\nabla_\Gamma)X = \overline{\nabla}_YX+(AX\cdot Y)\nu \quad\text{on}\quad \Gamma
\end{align}
for tangential vector fields $X$ and $Y$ on $\Gamma$.
We also call \eqref{E:Gauss_Formula_Another} the Gauss formula.

A proof of the formula \eqref{E:Bochner_Beltrami} is given in Appendix~\ref{S:Appendix_B}.
Note that \eqref{E:Bochner_Beltrami} is useful by itself since it gives a global expression under the fixed Cartesian coordinate system of the Bochner Laplacian acting on tangential vector fields on $\Gamma$, which is originally defined intrinsically and represented under only local coordinate systems.

Combining Lemmas~\ref{L:Viscous_General} and~\ref{L:Riemannian_Connection} we get the following formula on the surface divergence  of the projected strain rate, which is crucial for comparison of our limit Navier-Stokes system and the incompressible viscous fluid system on a manifold derived by Taylor~\cite{T92} (see Remark~\ref{R:NS_Limit_Comparison}).

\begin{lemma} \label{L:Compare_Viscous}
  For a tangential vector field $v$ on $\Gamma$ satisfying $\mathrm{div}_\Gamma v=0$ we have
  \begin{align} \label{E:Compare_Viscous}
    2P_\Gamma\mathrm{div}_\Gamma(P_\Gamma D^{tan}(v)P_\Gamma) = \Delta_Bv+Kv \quad\text{on}\quad \Gamma.
  \end{align}
\end{lemma}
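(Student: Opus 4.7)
The plan is to take the identity of Lemma~\ref{L:Viscous_General}, apply the orthogonal projection $P_\Gamma$ to kill all the normal pieces, then convert $P_\Gamma(\Delta_\Gamma v)$ into the Bochner Laplacian via \eqref{E:Bochner_Beltrami}, and finally eliminate the resulting $A^2 v$ term by a Cayley--Hamilton identity for $A$ on the tangent plane.

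First I would apply $P_\Gamma$ to \eqref{E:Viscous_General}. The term $2\mathrm{tr}[A\nabla_\Gamma v]\nu$ is annihilated by $P_\Gamma$, and the term $\nabla_\Gamma(\mathrm{div}_\Gamma v)$ vanishes by the hypothesis $\mathrm{div}_\Gamma v=0$ (note $P_\Gamma$ fixes any tangential gradient). For the last term I need $P_\Gamma(\nabla_\Gamma v)\nu$. Since $v$ is tangential, differentiating $v\cdot\nu=0$ with $\partial_i^{tan}$ and using $A=-\nabla_\Gamma\nu$ gives $(\nabla_\Gamma v)\nu = Av$, which is already tangential by \eqref{E:W_OrthProj}, so $P_\Gamma(\nabla_\Gamma v)\nu=Av$. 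Together with $P_\Gamma^2=P_\Gamma$ this yields
\begin{align*}
2P_\Gamma\mathrm{div}_\Gamma(P_\Gamma D^{tan}(v)P_\Gamma) = P_\Gamma(\Delta_\Gamma v) + HAv.
\end{align*}
Using \eqref{E:Bochner_Beltrami} in the form $P_\Gamma(\Delta_\Gamma v)=\Delta_B v - A^2 v$, the right-hand side becomes $\Delta_B v + HAv - A^2 v$.

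It remains to prove $HAv-A^2v = Kv$ for every tangential $v$. The Weingarten map $A$ is symmetric, satisfies $A\nu=0$ by \eqref{E:W_Normal}, and has eigenvalues $\kappa_1,\kappa_2$ on the tangent plane with $H=\kappa_1+\kappa_2$ and $K=\kappa_1\kappa_2$. Applying the Cayley--Hamilton theorem to the restriction of $A$ to the two-dimensional space $T_y\Gamma$ gives $A^2 = HA - K\,\mathrm{id}$ on tangent vectors, i.e.\ $A^2 v = HAv - Kv$. Rearranging and substituting gives the claimed identity $\Delta_B v + Kv$.

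The proof is almost entirely a substitution exercise once the correct tools are in hand, so the only delicate point is the small lemma $(\nabla_\Gamma v)\nu=Av$ for tangential $v$ together with the Cayley--Hamilton reduction; these are both short, and I do not anticipate a real obstacle. If there is anything subtle, it is merely in tracking that all normal components are correctly absorbed by $P_\Gamma$ before the Bochner identification is applied.
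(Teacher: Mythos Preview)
Your proof is correct and follows essentially the same route as the paper: apply $P_\Gamma$ to \eqref{E:Viscous_General}, use $(\nabla_\Gamma v)\nu=Av$ for tangential $v$ to reach $P_\Gamma(\Delta_\Gamma v)+HAv$, then invoke \eqref{E:Bochner_Beltrami} together with the identity $HAv=A^2v+Kv$. The only cosmetic difference is that the paper obtains this last identity by diagonalizing $A$ in the principal directions, whereas you phrase it as Cayley--Hamilton on the two-dimensional tangent plane; these are the same observation.
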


\begin{proof}
  Let $v$ be a tangential vector field on $\Gamma$ satisfying $\mathrm{div}_\Gamma v=0$.
  Then
  \begin{align*}
    (\nabla_\Gamma v)\nu = \nabla_\Gamma(v\cdot\nu)-(\nabla_\Gamma\nu)v = Av
  \end{align*}
  by $v\cdot\nu=0$ and $-\nabla_\Gamma\nu=A$.
  Applying this and
  \begin{align*}
    P_\Gamma(\mathrm{tr}[A\nabla_\Gamma v]\nu) = \mathrm{tr}[A\nabla_\Gamma v]P_\Gamma\nu = 0, \quad \mathrm{div}_\Gamma v = 0
  \end{align*}
  to the formula \eqref{E:Viscous_General}, and observing that $(\nabla_\Gamma v)\nu=Av$ is tangential, we have
  \begin{align} \label{Pf1:Compare_Viscous}
    2P_\Gamma\mathrm{div}_\Gamma(P_\Gamma D^{tan}(v)P_\Gamma) = P_\Gamma(\Delta_\Gamma v)+HAv.
  \end{align}
  Moreover, since $A$ is symmetric and has the eigenvalues $0$, $\kappa_1$, and $\kappa_2$, where the eigenvector corresponding to the eigenvalue $0$ is $\nu$ (see Lemma~\ref{L:Weingarten}), for each $y\in\Gamma$ we can take an orthonormal basis $\{e_1,e_2\}$ of the tangent plane of $\Gamma$ at $y$ such that $Ae_i=\kappa_ie_i$, $i=1,2$.
  (The vectors $e_1$ and $e_2$ are called the principal directions at $y$. See e.g.~\cite[Section VII.5]{KN96bk} for details.)
  Expressing the tangential vector $v$ as a linear combination of $e_1$ and $e_2$ and using $H=\kappa_1+\kappa_2$ and $K=\kappa_1\kappa_2$ we easily obtain $HAv = Kv+A^2v$.
  Applying this and \eqref{E:Bochner_Beltrami} to \eqref{Pf1:Compare_Viscous} we obtain \eqref{E:Compare_Viscous}.
\end{proof}

Besides derivation of limit equations, we are also interested in thin width limits of energy identities for the Euler and Navier-Stokes equations.
To derive limit energy identities we give change of variables formulas for integrals over level-set surfaces and tubular neighborhoods of $\Gamma$.
For $y\in\Gamma$ and $\rho\in[-\varepsilon,\varepsilon]$ we set
\begin{align} \label{E:Jacobian}
  J(y,\rho) := \{1-\rho\kappa_1(y)\}\{1-\rho\kappa_2(y)\} = 1-\rho H(y)+\rho^2 K(y).
\end{align}
Here the second equality follows from the definition of the Gaussian curvature and \eqref{E:Sum_Curvature}.
The function $J$ is the Jacobian appearing in the following change of variables formulas (see~\cite[Section 14.6]{GT01bk} or Appendix~\ref{S:Appendix_A}).

\begin{lemma} \label{L:Co-area}
  For a function $f$ on $\overline{\Omega_\varepsilon}$ we have
  \begin{align} \label{E:Co-area_NBD}
    \int_{\Omega_\varepsilon}f(x)\,dx = \int_{\Gamma}\int_{-\varepsilon}^\varepsilon f(y+\rho\nu(y))J(y,\rho)\,d\rho\,d\mathcal{H}^2(y)
  \end{align}
  and
  \begin{multline} \label{E:Co-area_BDY}
    \int_{\partial\Omega_\varepsilon}f(x)\,d\mathcal{H}^2(x) = \int_{\Gamma}f(y+\varepsilon\nu(y))J(y,\varepsilon)\,d\mathcal{H}^2(y) \\
    +\int_{\Gamma}f(y-\varepsilon\nu(y))J(y,-\varepsilon)\,d\mathcal{H}^2(y).
  \end{multline}
  Here $\mathcal{H}^2$ denotes the two-dimensional Hausdorff measure.
\end{lemma}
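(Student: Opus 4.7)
The plan is to reduce both identities to a local computation via a partition of unity on $\Gamma$, and then to compute the Jacobian of the normal coordinate map $\Phi(y,\rho) := y + \rho\nu(y)$ by diagonalizing the Weingarten map $A$.

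First I would cover $\Gamma$ by finitely many coordinate patches and fix a subordinate smooth partition of unity, so that both \eqref{E:Co-area_NBD} and \eqref{E:Co-area_BDY} reduce to the case where $f$ is supported in the image of a single local parameterization $\Psi(s,\rho):=\sigma(s)+\rho\,\nu(\sigma(s))$, with $\sigma\colon U\subset\mathbb{R}^2\to\Gamma$ a smooth chart. Differentiating and using $A=-\nabla_\Gamma\nu$ gives
\begin{align*}
\partial_{s_i}\Psi=(I_3-\rho A(\sigma))\partial_{s_i}\sigma,\qquad \partial_\rho\Psi=\nu(\sigma).
\end{align*}
At each $y\in\Gamma$, by Lemma~\ref{L:Weingarten} and the discussion after it, $A$ has eigenvalues $0,\kappa_1,\kappa_2$ with eigenvector $\nu$ for the eigenvalue $0$, so in an orthonormal basis of principal directions together with $\nu$, the matrix $I_3-\rho A$ is diagonal with entries $1-\rho\kappa_1,\ 1-\rho\kappa_2,\ 1$, whose determinant is exactly $J(y,\rho)$. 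Hence
\begin{align*}
\det\bigl(\partial_{s_1}\Psi,\partial_{s_2}\Psi,\partial_\rho\Psi\bigr)=J(\sigma(s),\rho)\,\det\bigl(\partial_{s_1}\sigma,\partial_{s_2}\sigma,\nu(\sigma)\bigr),
\end{align*}
and the factor $|\det(\partial_{s_1}\sigma,\partial_{s_2}\sigma,\nu(\sigma))|=|\partial_{s_1}\sigma\times\partial_{s_2}\sigma|$ is the standard surface area element on $\Gamma$. Applying the Euclidean change of variables on $U\times(-\varepsilon,\varepsilon)$ and summing over the partition of unity yields \eqref{E:Co-area_NBD}.

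For the boundary formula \eqref{E:Co-area_BDY}, $\partial\Omega_\varepsilon$ decomposes into the two parallel surfaces $\{d=\varepsilon\}$ and $\{d=-\varepsilon\}$, which are parameterized by $y\mapsto y\pm\varepsilon\nu(y)$. The tangent vectors are now $(I_3\mp\varepsilon A)\partial_{s_i}\sigma$, so the induced surface area factor in the same orthonormal basis is $|(1\mp\varepsilon\kappa_1)(1\mp\varepsilon\kappa_2)|=|J(y,\pm\varepsilon)|$. Adding the two contributions and using the same partition of unity argument gives \eqref{E:Co-area_BDY}. The only point that requires mild care is verifying that $\varepsilon$ is small enough that $\Phi$ is a diffeomorphism on $\Gamma\times[-\varepsilon,\varepsilon]$ and $J(y,\rho)>0$ throughout; both are guaranteed by the uniform bound on $\kappa_1,\kappa_2$ and the choice of tubular neighborhood already fixed in Section~\ref{S:Preliminaries}, after which the absolute values can be dropped. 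All remaining steps are routine multivariable calculus, so there is no genuine obstacle beyond bookkeeping.
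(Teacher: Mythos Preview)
Your proposal is correct and follows essentially the same route as the paper: a partition of unity on $\Gamma$, a local parametrization pushed forward by $y\mapsto y+\rho\nu(y)$, and the key identity $\det(I_3-\rho A)=(1-\rho\kappa_1)(1-\rho\kappa_2)=J(y,\rho)$ coming from the eigenstructure of $A$. The only cosmetic difference is that the paper first proves the level-set formula $\int_{\Gamma_\rho}f\,d\mathcal{H}^2=\int_\Gamma f(y+\rho\nu)J(y,\rho)\,d\mathcal{H}^2$ and then invokes the co-area formula to integrate in $\rho$, whereas you apply the three-dimensional change of variables to $\Psi(s,\rho)$ directly; the underlying Jacobian computation is identical.
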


When we use Lemma~\ref{L:Co-area} with the moving surface $\Gamma(t)$ we write $J(y,t,\rho)$ for the Jacobian given by \eqref{E:Jacobian}.

\subsection{Material derivatives and differentiation of composite functions with the closest point mapping}
Now let us return to the moving surface $\Gamma(t)$.
We first give a material time derivative of a function on $S_T$.
Let $v$ be a vector field on $S_T$ with $v\cdot\nu=V_\Gamma^N$.
Suppose that there exists the flow map $\Phi_v$ of $v$, i.e. $\Phi_v(\cdot,t)\colon\Gamma(0)\to\mathbb{R}^3$ is a diffeomorphism onto its range for each $t\in[0,T]$ and
\begin{align*}
  \Phi_v(Y,0) = Y, \quad \frac{d\Phi_v}{dt}(Y,t) = v(\Phi_v(Y,t),t) \quad\text{for}\quad (Y,t) \in \Gamma(0)\times(0,T).
\end{align*}
Note that $\Phi_v(\cdot,t)$ is a diffeomorphism from $\Gamma(0)$ onto $\Phi_v(\Gamma(0),t)=\Gamma(t)$ for each $t\in[0,T]$ since the normal component of $v$ is equal to the outward normal velocity $V_\Gamma^N$ of the moving surface $\Gamma(t)$, which completely determines the change of the shape of $\Gamma(t)$.
We define the material derivative of a function $f$ on $S_T$ along the velocity field $v$ as
\begin{align*}
  \partial^\bullet_vf(\Phi_v(Y,t),t) := \frac{d}{dt}\bigl(f(\Phi_v(Y,t),t)\bigr), \quad (Y,t)\in\Gamma(0)\times(0,T).
\end{align*}
By the chain rule of differentiation it is also represented as
\begin{align} \label{E:Material_Euler_Coordinate}
  \partial^\bullet_vf(y,t) = \partial_t\tilde{f}(y,t)+v(y,t)\cdot\nabla\tilde{f}(y,t), \quad (y,t) \in S_T,
\end{align}
where $\tilde{f}$ is an arbitrary extension of $f$ to $N_T$ satisfying $\tilde{f}|_{S_T}=f$.
We write $\partial^\circ$ for $\partial^\bullet_v$ with $v=V_\Gamma^N\nu$ and call it the normal time derivative.
Note that the normal time derivative of a function $f$ on $S_T$ is equal to the time derivative of its constant extension $\bar{f}$ in the normal direction, i.e.
\begin{align*}
  \partial^\circ f(y,t) = \partial_t\bar{f}(y,t) = \frac{d}{dt}\bigl(f(\pi(y,t),t)\bigr), \quad (y,t) \in S_T.
\end{align*}
Also, for a tangential vector field $v^T$ on $S_T$ the material derivative of $f$ along the velocity field of the form $v=V_\Gamma^N\nu+v^T$ is expressed as
\begin{align} \label{E:Material_and_Normal}
  \partial^\bullet_vf = \partial^\circ f+v^T\cdot\nabla_\Gamma f \quad\text{on}\quad S_T
\end{align}
by \eqref{E:Material_Euler_Coordinate} and $v^T\cdot\nabla\tilde{f}=v^T\cdot\nabla_\Gamma f$ on $S_T$ since $v^T$ is tangential.
See also~\cite[Section~3]{CFG05} for the time derivative of functions on a moving surface.

In the following sections we frequently differentiate the composition of a function on $\Gamma(t)$ and the closest point mapping $\pi(\cdot,t)$.
To avoid repetition of the same calculations we give several formulas on derivatives of composite functions with $\pi$.

Let $f(x,t)$ be a function on $Q_{\varepsilon,T}$.
Based on the normal coordinate system $x=\pi(x,t)+d(x,t)\nu(\pi(x,t),t)$ for $x\in\Omega_\varepsilon(t)$, we expand $f(x,t)$ in powers of the signed distance $d(x,t)$:
\begin{align*}
  f(x,t) = g(\pi(x,t),t)+d(x,t)g^1(\pi(x,t),t)+\cdots.
\end{align*}
Here $g$, $g^1$, and the coefficients of higher order terms in $d(x,t)$ are considered as functions on $S_T$.
Also, for $k\in\mathbb{N}$ we write $R(d(x,t)^k)$ for the terms of order higher than $k-1$ with respect to small $d(x,t)$, i.e.
\begin{gather} \label{E:Residual_Function}
  \begin{gathered}
    f(x,t) = g(\pi(x,t),t)+\cdots+d(x,t)^{k-1}g^{k-1}(\pi(x,t),t)+R(d(x,t)^k), \\
    R(d(x,t)^k) = d(x,t)^kg^k(\pi(x,t),t)+d(x,t)^{k+1}g^{k+1}(\pi(x,t),t)+\cdots.
  \end{gathered}
\end{gather}
In the sequel, we also use Landau's symbol $O(\varepsilon^k)$ (as $\varepsilon\to0$) for a nonnegative integer $k$, i.e. $O(\varepsilon^k)$ is a quantity satisfying $|O(\varepsilon^k)|\leq C\varepsilon^k$ for small $\varepsilon>0$ with a constant $C>0$ independent of $\varepsilon$.
Note that, contrary to $O(\varepsilon^k)$, we may differentiate $R(d(x,t)^k)$ with respect to $x$ and $t$ since it just stands for the higher order terms in the expansion \eqref{E:Residual_Function} with respect to small $d(x,t)$, and the $l$-th order derivative of $R(d(x,t)^k)$ is $R(d(x,t)^{k-l})$ for $l\leq k$.
Also, $R(d(x,t)^k)=O(\varepsilon^k)$ for $(x,t)\in Q_{\varepsilon,T}$ and $k\in\mathbb{N}$ by $|d(x,t)|<\varepsilon$ on $Q_{\varepsilon,T}$.
We use the same notations on the expansion \eqref{E:Residual_Function} for functions on $\Omega_\varepsilon(t)$ with each fixed $t\in[0,T]$.

\begin{lemma} \label{L:Diff_Composite}
  Let $f$ be a scalar- or vector-valued function on $S_T$.
  The derivatives of the composite function $f(\pi(x,t),t)$ with respect to $x$ and $t$ are of the form
  \begin{align}
    \nabla\bigl(f(\pi,t)\bigr) &= \nabla_\Gamma f(\pi,t)+d(x,t)[A\nabla_\Gamma f](\pi,t)+R(d(x,t)^2), \label{E:Nabla_Composite} \\
    \partial_t\bigl(f(\pi,t)\bigr) &= \partial^\circ f(\pi,t)+d(x,t)[(\nabla_\Gamma V_\Gamma^N\cdot\nabla_\Gamma)f](\pi,t)+R(d(x,t)^2) \label{E:Dt_Composite}
  \end{align}
  for $(x,t)\in Q_{\varepsilon,T}$.
  Here we abbreviate $\pi(x,t)$ to $\pi$.
\end{lemma}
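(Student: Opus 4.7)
The plan is to reduce both formulas to the chain rule applied to $f(\pi(x,t),t)$, combined with expansions of $\nabla\pi$ and $\partial_t\pi$ in powers of $d(x,t)$ around $\Gamma(t)$. The key structural input is the normal coordinate identity $\pi(x,t) = x - d(x,t)\,\nu(\pi(x,t),t)$, used together with \eqref{E:Nabla_Dist} and \eqref{E:Dt_Dist_NBD}.

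For \eqref{E:Nabla_Composite}, I would differentiate $\pi_j(x,t) = x_j - d(x,t)\,\nu_j(\pi(x,t),t)$ with respect to $x_i$, using $\partial_i d = \nu_i(\pi)$ from \eqref{E:Nabla_Dist} and the identity $\partial_i[\nu_j(\pi(x,t),t)] = \partial_i\partial_j d(x,t)$, to obtain
\begin{align*}
  \nabla\pi(x,t) = P_\Gamma(\pi(x,t),t) - d(x,t)\,\nabla^2 d(x,t).
\end{align*}
A Taylor expansion of $\nabla^2 d$ along the normal ray from $\pi(x,t)$, combined with \eqref{E:W_Hessian}, gives $-\nabla^2 d(x,t) = A(\pi(x,t),t) + R(d(x,t))$. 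Now for any smooth extension $\tilde f$ of $f$ to $N_T$, the chain rule yields $\partial_i[f(\pi,t)] = \sum_j (\partial_j\tilde f)(\pi,t)\,\partial_i\pi_j(x,t)$, and the identity $\nabla^2 d\cdot\nabla d = \tfrac12\nabla|\nabla d|^2 = 0$ shows that each row of $\nabla\pi$ is orthogonal to $\nu(\pi)$. Hence the normal part of $\nabla\tilde f$ drops out, leaving $\nabla[f(\pi,t)] = [P_\Gamma(\pi) + d\,A(\pi) + R(d^2)]\,\nabla_\Gamma f(\pi,t)$; since $P_\Gamma\nabla_\Gamma f = \nabla_\Gamma f$, this is exactly \eqref{E:Nabla_Composite}.

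For \eqref{E:Dt_Composite}, I would differentiate the same normal coordinate identity in $t$ at fixed $x$, using $\partial_t d(x,t) = -V_\Gamma^N(\pi,t)$ from \eqref{E:Dt_Dist_NBD} and swapping space and time via $\partial_t[\nu(\pi(x,t),t)] = \partial_t\nabla d(x,t) = \nabla\partial_t d(x,t) = -\nabla[V_\Gamma^N(\pi(x,t),t)]$. Applying the already-established \eqref{E:Nabla_Composite} to the scalar $V_\Gamma^N$ in the last term produces
\begin{align*}
  \partial_t\pi(x,t) = V_\Gamma^N(\pi,t)\,\nu(\pi,t) + d(x,t)\,\nabla_\Gamma V_\Gamma^N(\pi,t) + R(d(x,t)^2).
\end{align*}
The chain rule then gives $\partial_t[f(\pi,t)] = (\partial_t\tilde f)(\pi,t) + \nabla\tilde f(\pi,t)\cdot\partial_t\pi(x,t)$. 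The combination $(\partial_t\tilde f)(\pi,t) + V_\Gamma^N(\pi,t)\,(\nu\cdot\nabla\tilde f)(\pi,t)$ is precisely $\partial^\circ f(\pi,t)$ by \eqref{E:Material_Euler_Coordinate} with $v = V_\Gamma^N\nu$, while the order-$d$ contribution equals $d(x,t)\,[\nabla_\Gamma V_\Gamma^N\cdot\nabla_\Gamma]f(\pi,t)$, since $\nabla_\Gamma V_\Gamma^N$ is tangential and pairs with $\nabla\tilde f$ to produce a tangential directional derivative independent of the extension.

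The main obstacle I anticipate is careful bookkeeping: verifying which terms are genuinely of order $d$ and which collapse into $R(d^2)$, and confirming at each stage that the expressions are independent of the auxiliary extension $\tilde f$. All the expansions close cleanly thanks to two elementary identities, $\nabla^2 d\cdot\nabla d = 0$ (equivalently $A\nu = 0$, see \eqref{E:W_Normal}) and $\partial_t\nabla d = \nabla\partial_t d$, which together guarantee that normal-direction contributions either vanish or recombine into $\partial^\circ f$.
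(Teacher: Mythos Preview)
Your proposal is correct and follows essentially the same route as the paper: both proofs reduce the claim via the chain rule to expansions of $\nabla\pi$ and $\partial_t\pi$, obtain $\nabla\pi = P_\Gamma(\pi) - d\,\nabla^2 d$ and $\partial_t\pi = V_\Gamma^N\nu - d\,\partial_t\nabla d$ from the normal coordinate identity, and then use \eqref{E:W_Hessian}, \eqref{E:Dt_Dist_NBD}, and $\partial_t\nabla d = \nabla\partial_t d$ to close the expansions. Your remark that $\nabla^2 d\cdot\nabla d = 0$ makes every row of $\nabla\pi$ orthogonal to $\nu(\pi)$ is a clean way to see extension-independence; the paper achieves the same effect by noting $P_\Gamma\nabla\tilde f = \nabla_\Gamma f$ and $A\nabla\tilde f = A\nabla_\Gamma f$ directly.
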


We also give an expansion formula for the divergence of a matrix-valued function which we need to derive limit equations of the Navier-Stokes equations.

\begin{lemma} \label{L:Div_Matrix}
  Let $S$ and $S^1$ be $3\times 3$ matrix-valued functions on $\Gamma(t)$ with each fixed $t\in(0,T)$.
  For $x\in \Omega_\varepsilon(t)$ we set
  \begin{align*}
    D(x) = S(\pi(x,t))+d(x,t)S^1(\pi(x,t))+R(d(x,t)^2).
  \end{align*}
  Then we have
  \begin{align} \label{E:Div_Matrix}
    \mathrm{div}\,D(x) = \mathrm{div}_\Gamma S(\pi(x,t))+\bigl(S^1(\pi(x,t))\bigr)^T\nu(\pi,t)+R(d(x,t)).
  \end{align}
  for $x\in \Omega_\varepsilon(t)$.
  Here $(S^1)^T$ denotes the transpose of the matrix $S^1$.
\end{lemma}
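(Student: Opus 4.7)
The plan is to compute the divergence componentwise and then invoke the scalar expansion formula \eqref{E:Nabla_Composite} from Lemma~\ref{L:Diff_Composite} entrywise. Fix $t\in(0,T)$ (which I will suppress) and abbreviate $\pi=\pi(x,t)$ and $d=d(x,t)$. By definition,
\begin{align*}
  [\mathrm{div}\,D(x)]_j = \sum_{i=1}^{3}\partial_i D_{ij}(x), \quad D_{ij}(x) = S_{ij}(\pi)+d\,S^1_{ij}(\pi)+R(d^2),
\end{align*}
so I would differentiate each of the three summands separately using the product rule.

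First, for the leading term $S_{ij}(\pi)$, Lemma~\ref{L:Diff_Composite} applied with $f=S_{ij}$ gives
\begin{align*}
  \partial_i\bigl(S_{ij}(\pi)\bigr) = \partial_i^{tan}S_{ij}(\pi)+d\,[A\nabla_\Gamma S_{ij}]_i(\pi)+R(d^2),
\end{align*}
and summing over $i$ produces $[\mathrm{div}_\Gamma S]_j(\pi)$ plus an $R(d)$ remainder. Second, for $d\,S^1_{ij}(\pi)$, the product rule combined with $\partial_i d=\nu_i(\pi)$ (formula \eqref{E:Nabla_Dist}) and another application of Lemma~\ref{L:Diff_Composite} to $S^1_{ij}$ gives
\begin{align*}
  \partial_i\bigl(d\,S^1_{ij}(\pi)\bigr) = \nu_i(\pi)\,S^1_{ij}(\pi)+d\,\partial_i^{tan}S^1_{ij}(\pi)+R(d^2),
\end{align*}
and summing over $i$ yields $\sum_i\nu_i(\pi)\,S^1_{ij}(\pi)=[(S^1)^T\nu]_j(\pi)$ plus an $R(d)$ term. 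Third, the residual $R(d^2)$ satisfies $\partial_iR(d^2)=R(d)$, as noted right after \eqref{E:Residual_Function}.

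Adding the three contributions and collecting remainders gives the claimed identity
\begin{align*}
  [\mathrm{div}\,D(x)]_j = [\mathrm{div}_\Gamma S(\pi)]_j+[(S^1(\pi))^T\nu(\pi)]_j+R(d),
\end{align*}
which proves \eqref{E:Div_Matrix} componentwise. There is no real obstacle here: the entire argument is bookkeeping built on top of Lemma~\ref{L:Diff_Composite} and the identity $\nabla d=\nu\circ\pi$; the only point one should be careful about is the appearance of the transpose, which comes from the index convention $[\mathrm{div}\,M]_j=\sum_i\partial_i M_{ij}$ so that $\nu$ is contracted with the first index of $S^1$, producing $(S^1)^T\nu$ rather than $S^1\nu$.
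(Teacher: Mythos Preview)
Your proof is correct and follows essentially the same route as the paper's: both compute $\partial_i D_{ij}$ componentwise, apply the expansion of $\nabla(f\circ\pi)$ from Lemma~\ref{L:Diff_Composite} (the paper cites the intermediate formula \eqref{E:Nabla_Proj} for $\nabla\pi$, which amounts to the same thing), use $\partial_i d=\nu_i\circ\pi$ from \eqref{E:Nabla_Dist}, and then sum over $i$ to obtain $\mathrm{div}_\Gamma S+(S^1)^T\nu+R(d)$. Your remark about the transpose arising from the index convention is exactly the point.
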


\section{Limit equations of the Euler equations} \label{S:Limit_Eu}
We consider the incompressible Euler equations in $\Omega_\varepsilon(t)$:
\begin{alignat}{2}
  \partial u_t+(u\cdot\nabla)u+\nabla p &= 0 &\quad &\text{in}\quad Q_{\varepsilon,T}, \label{E:Eu_Momentum} \\
  \mathrm{div}\,u &= 0 &\quad &\text{in}\quad Q_{\varepsilon,T}, \label{E:Eu_Continuity} \\
  u\cdot\nu_\varepsilon &= V_\varepsilon^N &\quad &\text{on}\quad \partial_\ell Q_{\varepsilon,T}. \label{E:Eu_Boundary}
\end{alignat}
Here $u=(u_1,u_2,u_3)$ is the velocity of a bulk fluid and $p$ is the pressure.
The goal of this section is to derive limit equations of the Euler equations as $\varepsilon$ goes to zero.
According to the normal coordinate system \eqref{E:N_Coordinate}, we expand $u$ and $p$ with respect to the signed distance $d(x,t)$ as
\begin{align}
  u(x,t) &= v(\pi(x,t),t)+d(x,t)v^1(\pi(x,t),t)+R(d(x,t)^2), \label{E:Eu_U_Expand}\\
  p(x,t) &= q(\pi(x,t),t)+d(x,t)q^1(\pi(x,t),t)+R(d(x,t)^2). \label{E:P_Expand}
\end{align}
Here we used the notation \eqref{E:Residual_Function}.
The limit equations are given as the principal term in the expansion with respect to $d(x,t)$ of the Euler equations in $\Omega_\varepsilon(t)$.

\begin{theorem} \label{T:Eu_Limit}
  Let $u$ and $p$ satisfy the Euler equations \eqref{E:Eu_Momentum}--\eqref{E:Eu_Boundary} in the moving thin domain $\Omega_\varepsilon(t)$.
  Then the normal component of the zeroth order term $v$ in the expansion \eqref{E:Eu_U_Expand} is equal to the outward normal velocity of the moving surface $\Gamma(t)$, i.e. $v\cdot\nu=V_\Gamma^N$.
  Moreover, $v$ and the zeroth order term $q$ and the first order term $q^1$ in the expansion \eqref{E:P_Expand} satisfy
  \begin{align}
    \partial^\bullet_vv+\nabla_\Gamma q+q^1\nu &= 0 \quad\text{on}\quad S_T, \label{E:Eu_Momentum_Limit} \\
    \mathrm{div}_\Gamma v &= 0 \quad\text{on}\quad S_T. \label{E:Eu_Continuity_Limit}
  \end{align}
\end{theorem}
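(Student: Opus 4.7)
The plan is to substitute the normal-coordinate expansions \eqref{E:Eu_U_Expand} and \eqref{E:P_Expand} into each of the three equations \eqref{E:Eu_Momentum}--\eqref{E:Eu_Boundary} and equate the leading (zeroth-order in $d$) coefficients at points of $\Gamma(t)$. Throughout I would make systematic use of $\nabla d=\nu\circ\pi$ from \eqref{E:Nabla_Dist}, $\partial_td=-V_\Gamma^N\circ\pi$ from \eqref{E:Dt_Dist_NBD}, and the composite-function expansion formulas in Lemma~\ref{L:Diff_Composite}.

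First I would dispose of the boundary condition and the continuity equation. Using \eqref{E:N_Vect_Dom} and \eqref{E:N_Vel_Dom}, the identity $u\cdot\nu_\varepsilon=V_\varepsilon^N$ evaluated at $d=\pm\varepsilon$ reads $\pm(v+\varepsilon v^1+R(\varepsilon^2))\cdot\nu=\pm V_\Gamma^N$ at $\pi(x,t)$. Since $v$ and $v^1$ do not depend on $\varepsilon$, the sum of the two cases yields $v\cdot\nu=V_\Gamma^N$ on $S_T$ (the first assertion of the theorem), while the difference forces $v^1\cdot\nu=0$ on $S_T$; this subsidiary identity is not stated in the theorem but is needed below. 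For \eqref{E:Eu_Continuity}, differentiating \eqref{E:Eu_U_Expand} componentwise and applying \eqref{E:Nabla_Composite} gives $\nabla u=\nabla_\Gamma v+\nu\otimes v^1+R(d)$, so $\mathrm{div}\,u=\mathrm{div}_\Gamma v+\nu\cdot v^1+R(d)$. Extracting the $d^0$ coefficient and combining with $v^1\cdot\nu=0$ yields \eqref{E:Eu_Continuity_Limit}.

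The momentum equation is the principal computation. I would expand each term of \eqref{E:Eu_Momentum} to zeroth order in $d$: formula \eqref{E:Dt_Composite} with $\partial_td=-V_\Gamma^N\circ\pi$ gives $\partial_tu=\partial^\circ v-V_\Gamma^N v^1+R(d)$; the expression for $\nabla u$ above together with the just-established $v\cdot\nu=V_\Gamma^N$ gives $(u\cdot\nabla)u=(v\cdot\nabla_\Gamma)v+V_\Gamma^N v^1+R(d)$; and \eqref{E:P_Expand} combined with \eqref{E:Nabla_Composite} gives $\nabla p=\nabla_\Gamma q+q^1\nu+R(d)$. Summing these expansions, the two $V_\Gamma^N v^1$ contributions cancel exactly, and upon recognising $\partial^\circ v+(v\cdot\nabla_\Gamma)v=\partial^\bullet_vv$ on $S_T$ (applying \eqref{E:Material_Euler_Coordinate} to the normal extension $\bar{v}$, whose Cartesian gradient on $\Gamma$ equals $\nabla_\Gamma v$), the $d^0$ coefficient of \eqref{E:Eu_Momentum} becomes precisely \eqref{E:Eu_Momentum_Limit}. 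The main obstacle I anticipate is bookkeeping: both $\partial_tu$ and $(u\cdot\nabla)u$ contribute an $O(1)$ term proportional to $v^1$, and only the cancellation of these two contributions—triggered by the first assertion $v\cdot\nu=V_\Gamma^N$—yields a limit momentum equation free of $v^1$. The remaining first-order data $q^1$ survives in \eqref{E:Eu_Momentum_Limit} solely through the normal coupling $q^1\nu$, which will be fixed by taking the normal component of the limit equation.
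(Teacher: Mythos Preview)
Your proposal is correct and follows essentially the same route as the paper: extract $v\cdot\nu=V_\Gamma^N$ and $v^1\cdot\nu=0$ from the boundary condition at $d=\pm\varepsilon$, expand $\nabla u$, $\partial_t u$, $(u\cdot\nabla)u$, and $\nabla p$ to zeroth order in $d$ via Lemma~\ref{L:Diff_Composite}, and observe the cancellation of the $V_\Gamma^N v^1$ terms to obtain \eqref{E:Eu_Momentum_Limit} and \eqref{E:Eu_Continuity_Limit}. The only cosmetic difference is that the paper writes the inertia term as $(v^T\cdot\nabla_\Gamma)v$ and invokes \eqref{E:Material_and_Normal} to assemble $\partial^\bullet_v v$, whereas you write $(v\cdot\nabla_\Gamma)v$ and appeal to the constant normal extension; since $\nabla_\Gamma v_j$ is tangential these are identical.
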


Before starting to prove Theorem~\ref{T:Eu_Limit} we give remarks on the limit equations \eqref{E:Eu_Momentum_Limit}--\eqref{E:Eu_Continuity_Limit} and necessary conditions on the motion of $\Gamma(t)$ for the existence of incompressible fluids in $\Gamma(t)$ and $\Omega_\varepsilon(t)$ for all $\varepsilon>0$.

\begin{remark} \label{R:Eu_Limit_Comparison}
  Let us explain how the limit equations \eqref{E:Eu_Momentum_Limit} and \eqref{E:Eu_Continuity_Limit} determine $v$, $q$, and $q^1$.
  As stated in Theorem~\ref{T:Eu_Limit}, the normal component of $v$ is equal to the outward normal velocity of the moving surface.
  The tangential component of $v$ and the scalar function $q$ are determined by the equations
  \begin{align} \label{E:Eu_Limit_Tangential}
    P_\Gamma\partial^\bullet_vv+\nabla_\Gamma q = 0, \quad \mathrm{div}_\Gamma v = 0 \quad\text{on}\quad S_T.
  \end{align}
  Finally the scalar function $q^1$ is given just by the inner product of \eqref{E:Eu_Momentum_Limit} and $\nu$:
  \begin{align} \label{E:Eu_Limit_Normal_Move}
    q^1 = -\partial^\bullet_vv\cdot\nu \quad\text{on}\quad S_T.
  \end{align}
  Note that $q^1$ comes from the normal derivative of the pressure $p$ of the bulk fluid in the moving thin domain (see \eqref{E:Nabla_P} below).

  The system \eqref{E:Eu_Limit_Tangential} is the same as the incompressible Euler system (II) in~\cite{KLG16} with the constant density.
  When the surface $\Gamma(t)=\Gamma$ is stationary, the limit velocity $v$ is tangential ($v\cdot\nu=V_\Gamma^N=0$) and $P_\Gamma\{(v\cdot\nabla)v\}=\overline{\nabla}_vv$ holds on $\Gamma$ by the Gauss formula \eqref{E:Gauss_Formula}, where $\overline{\nabla}_vv$ is the covariant derivative.
  From this and the fact that $P_\Gamma$ is independent of the time it follows that
  \begin{align} \label{E:Projection_Material_Stationary}
    P_\Gamma\partial^\bullet_vv = P_\Gamma\partial_tv+P_\Gamma\{(v\cdot\nabla)v\} = \partial_tv+\overline{\nabla}_vv \quad\text{on}\quad \Gamma.
  \end{align}
  Hence the tangential limit system \eqref{E:Eu_Limit_Tangential} becomes
  \begin{align*}
    \partial_tv+\overline{\nabla}_vv+\nabla_\Gamma q = 0, \quad \mathrm{div}_\Gamma v = 0 \quad\text{on}\quad \Gamma\times(0,T),
  \end{align*}
  which is the same as the Euler system on a manifold derived by Arnol$'$d~\cite{A66,A89bk} (see also Ebin and Marsden~\cite{EM70}).
  Also, applying $v\cdot\nu=0$, \eqref{E:Gauss_Formula}, and the fact that $\nu$ is independent of time to \eqref{E:Eu_Limit_Normal_Move} we obtain
  \begin{align} \label{E:Eu_Normal_Limit_Fixed}
    q^1 = - \partial^\bullet_vv\cdot\nu = -\partial_t(v\cdot\nu)-\{(v\cdot\nabla)v\}\cdot\nu = -Av\cdot v,
  \end{align}
  which does not vanish in general even if the surface is stationary.
\end{remark}

\begin{remark} \label{R:Incompressibility}
  For the existence of a surface incompressible fluid obeying \eqref{E:Eu_Continuity_Limit} it is required that the area of the moving surface $\Gamma(t)$ is preserved in time.
  Indeed, by the Leibniz formula (see~\cite[Lemma~2.2]{DE07}) with a velocity field $v$ on $S_T$ satisfying $v\cdot\nu=V_\Gamma^N$ and \eqref{E:Eu_Continuity_Limit} we have
  \begin{align} \label{E:Area_Preserve}
    \frac{d}{dt}|\Gamma(t)| = \frac{d}{dt}\int_{\Gamma(t)}1\,d\mathcal{H}^2 = \int_{\Gamma(t)}\mathrm{div}_\Gamma v\,d\mathcal{H}^2 = 0,
  \end{align}
  where $|\Gamma(t)|$ is the area of $\Gamma(t)$.
  Similarly, when the moving thin domain $\Omega_\varepsilon(t)$ is filled with an incompressible fluid satisfying \eqref{E:Eu_Continuity} and the impermeable boundary condition \eqref{E:Eu_Boundary}, its volume $|\Omega_\varepsilon(t)|$ must remain constant by the Reynolds transport theorem (see e.g.~\cite{G81bk}):
  \begin{align*}
    \frac{d}{dt}|\Omega_\varepsilon(t)| &= \frac{d}{dt}\int_{\Omega_\varepsilon(t)}1\,dx = \int_{\Omega_\varepsilon(t)}V_\varepsilon^N\,d\mathcal{H}^2 \\
    &= \int_{\partial\Omega_\varepsilon(t)}u\cdot\nu_\varepsilon\,d\mathcal{H}^2 = \int_{\Omega_\varepsilon(t)}\mathrm{div}\,u\,dx = 0.
  \end{align*}
  By the change of variables formula \eqref{E:Co-area_NBD} the volume of $\Omega_\varepsilon(t)$ is expressed as
  \begin{align*}
    |\Omega_\varepsilon(t)| &= \int_{\Omega_\varepsilon(t)}1\,dx = \int_{\Gamma(t)}\int_{-\varepsilon}^\varepsilon J(y,t,\rho)\,d\rho\,d\mathcal{H}^2 \\
    &= 2\varepsilon|\Gamma(t)|+\frac{2}{3}\varepsilon^3\int_{\Gamma(t)}K\,d\mathcal{H}^2.
  \end{align*}
  Hence we need to assume
  \begin{align*}
    \frac{d}{dt}|\Gamma(t)| = 0, \quad \frac{d}{dt}\int_{\Gamma(t)}K\,d\mathcal{H}^2 = 0
  \end{align*}
  for the existence of an incompressible fluid in the $\varepsilon$-tubular neighborhood $\Omega_\varepsilon(t)$ of $\Gamma(t)$ for all $\varepsilon>0$.
  However, by the Gauss-Bonnet theorem we have
  \begin{align*}
    \int_{\Gamma(t)}K\,d\mathcal{H}^2 = 2\pi\chi(\Gamma(t)),
  \end{align*}
  where $\chi(\Gamma(t))$ is the Euler characteristic of $\Gamma(t)$ (see e.g.~\cite[Section~C.5]{T11bk}).
  Since the Euler characteristic is a topological invariant and the moving surface $\Gamma(t)$ does not change its topology, the integral of the Gaussian curvature $K$ over $\Gamma(t)$ is constant in time.
  Therefore, only the area preserving condition \eqref{E:Area_Preserve} on $\Gamma(t)$ is necessary for the existence of incompressible fluids on $\Gamma(t)$ and in $\Omega_\varepsilon(t)$ for all $\varepsilon>0$.
  Note that this assertion is valid only for a moving surface in $\mathbb{R}^3$ or a moving hypersurface in $\mathbb{R}^4$.
  Indeed, when $\Gamma(t)$ is a moving hypersurface in $\mathbb{R}^n$ with $n>4$, the Jacobian $J(y,t,\rho)$ is a polynomial in $\rho$ of degree greater than three (see e.g.~\cite[Section~14.6]{GT01bk} and~\cite[Section~5.1]{M17}) and thus we need more constraints on the motion of $\Gamma(t)$.
\end{remark}

\begin{proof}[Proof of Theorem~\ref{T:Eu_Limit}]
  For the sake of simplicity, we use the abbreviations
  \begin{align} \label{E:Abbrev}
    f(\pi,t) = f(\pi(x,t),t), \quad R(d^k) = R(d(x,t)^k)
  \end{align}
  for a function $f$ on $S_T$ and $k\in\mathbb{N}$.
  Since $\nu_\varepsilon$ and $V_\varepsilon^N$ are given by \eqref{E:N_Vect_Dom} and \eqref{E:N_Vel_Dom}, the boundary condition \eqref{E:Eu_Boundary} reads
  \begin{align*}
    u(x,t)\cdot\nu(\pi,t) = V_\Gamma^N(\pi,t), \quad x\in\partial\Omega_\varepsilon(t).
  \end{align*}
  We substitute \eqref{E:Eu_U_Expand} for $u$ in the above equality.
  Then
  \begin{align*}
    v(\pi,t)\cdot\nu(\pi,t)\pm\varepsilon v^1(\pi,t)\cdot\nu(\pi,t)+O(\varepsilon^2) = V_\Gamma^N(\pi,t)
  \end{align*}
  when $d(x,t)=\pm\varepsilon$ (double-sign corresponds).
  Since $v(\pi,t)$, $v^1(\pi,t)$, $\nu(\pi,t)$, and $V_\Gamma^N(\pi,t)$ are independent of $\varepsilon$, it follows from the above equation that
  \begin{align}
    v(\pi,t)\cdot\nu(\pi,t) &= V_\Gamma^N(\pi,t), \label{E:Eu_V0_N} \\
    v^1(\pi,t)\cdot\nu(\pi,t) &= 0. \label{E:Eu_V1_N}
  \end{align}
  The first statement of the theorem follows from the equality \eqref{E:Eu_V0_N}.
  Let us write $v=V_\Gamma^N\nu+v^T$ with a tangential velocity field $v^T$ on $\Gamma(t)$ and derive the equations \eqref{E:Eu_Momentum_Limit} and \eqref{E:Eu_Continuity_Limit}.
  By \eqref{E:Nabla_Dist} and \eqref{E:Nabla_Composite} we have
  \begin{align} \label{E:Eu_Nabla_U}
    \nabla u(x,t) &= \nabla\bigl(v(\pi,t)\bigr)+\nabla d(x,t)\otimes v^1(\pi,t)+R(d) \\
    &= \nabla_\Gamma v(\pi,t)+\nu(\pi,t)\otimes v^1(\pi,t)+R(d) \notag
  \end{align}
  and
  \begin{align} \label{E:Nabla_P}
    \nabla p(x,t) = \nabla_\Gamma q(\pi,t)+q^1(\pi,t)\nu(\pi,t)+R(d).
  \end{align}
  Also, by \eqref{E:Dt_Dist_NBD} and \eqref{E:Dt_Composite},
  \begin{align} \label{E:Eu_Dt_U}
    \partial_tu(x,t) &= \partial_t\bigl(v(\pi,t)\bigr)+\partial_td(x,t)v^1(\pi,t)+R(d) \\
    &= \partial^\circ v(\pi,t)-V_\Gamma^N(\pi,t)v^1(\pi,t)+R(d). \notag
  \end{align}
  From \eqref{E:Eu_Nabla_U} the gradient of the $j$-th component of $u$ is
  \begin{align*}
    \nabla u_j(x,t) = \nabla_\Gamma v_j(\pi,t)+v_j^1(\pi,t)\nu(\pi,t)+R(d).
  \end{align*}
  We take the inner product of this equation and \eqref{E:Eu_U_Expand}, and then apply \eqref{E:Eu_V0_N} and $v\cdot\nabla_\Gamma v_j=v^T\cdot\nabla_\Gamma v_j$ to get the $j$-th component of the inertia term
  \begin{align*}
    u(x,t)\cdot\nabla u_j(x,t) = v^T(\pi,t)\cdot\nabla_\Gamma v_j(\pi,t)+V_\Gamma^N(\pi,t)v_j^1(\pi,t)+R(d).
  \end{align*}
  Hence the inertia term $(u\cdot\nabla)u$ is of the form
  \begin{align}\label{E:Eu_Inertia}
    [(u\cdot\nabla)u](x,t) = [(v^T\cdot\nabla_\Gamma)v](\pi,t)+V_\Gamma^N(\pi,t)v^1(\pi,t)+R(d).
  \end{align}
  Substituting \eqref{E:Nabla_P}, \eqref{E:Eu_Dt_U}, and \eqref{E:Eu_Inertia} for \eqref{E:Eu_Momentum} and applying \eqref{E:Material_and_Normal} we obtain
  \begin{align*}
    \partial^\bullet_vv(\pi,t)+\nabla_\Gamma q(\pi,t)+q^1(\pi,t)\nu(\pi,t) = R(d).
  \end{align*}
  In this equation, each term on the left-hand side is independent of $d$.
  Therefore, the equation \eqref{E:Eu_Momentum_Limit} should be satisfied.

  Finally, by \eqref{E:Eu_V1_N} and \eqref{E:Eu_Nabla_U} we have
  \begin{align*}
    \mathrm{div}\,u(x,t) &= \mathrm{tr}[\nabla u(x,t)] = \mathrm{div}_\Gamma v(\pi,t)+\nu(\pi,t)\cdot v^1(\pi,t)+R(d) \\
    &= \mathrm{div}_\Gamma v(\pi,t)+R(d)
  \end{align*}
  and thus the equation \eqref{E:Eu_Continuity} reads $\mathrm{div}_\Gamma v(\pi,t) = R(d)$.
  Since the left-hand side is independent of $d$, we conclude that $v$ satisfies the equation \eqref{E:Eu_Continuity_Limit}.
\end{proof}

\section{Limit equations of the Navier-Stokes equations} \label{S:Limit_NS}
In this section, we consider the incompressible Navier-Stokes equations in $\Omega_\varepsilon(t)$:
\begin{alignat}{2}
  \partial_tu+(u\cdot\nabla)u+\nabla p &= \mu_0\Delta u &\quad &\text{in}\quad Q_{\varepsilon,T}, \label{E:NS_Momentum}\\
  \mathrm{div}\,u &= 0 &\quad &\text{in}\quad Q_{\varepsilon,T}. \label{E:NS_Continuity}
\end{alignat}
Here $u=(u_1,u_2,u_3)$ is the velocity of a bulk fluid, $p$ is the pressure, and $\mu_0>0$ is the viscosity coefficient.
On these equations we impose the (perfect slip) Navier boundary condition of the form
\begin{alignat}{2}
  u\cdot\nu_\varepsilon &= V_\varepsilon^N &\quad &\text{on}\quad \partial_\ell Q_{\varepsilon,T}, \label{E:NS_B_Norm} \\
  [D(u)\nu_\varepsilon]_{\text{tan}} &= 0 &\quad &\text{on}\quad \partial_\ell Q_{\varepsilon,T}. \label{E:NS_B_Tan}
\end{alignat}
Here $[a]_{\text{tan}}$ denotes the tangential component to $\partial\Omega_\varepsilon(t)$ of a vector $a\in\mathbb{R}^3$ and $D(u)$ is the strain rate tensor given by
\begin{align*}
  D(u) := \frac{\nabla u+(\nabla u)^T}{2},
\end{align*}
where $(\nabla u)^T$ is the transposed matrix of $\nabla u$.

In order to derive limit equations of the Navier-Stokes equations \eqref{E:NS_Momentum}--\eqref{E:NS_B_Tan} we expand the velocity field $u$ with respect to the signed distance $d(x,t)$ as
\begin{multline} \label{E:NS_U_Expand}
  u(x,t) = v(\pi(x,t),t)+d(x,t)v^1(\pi(x,t),t) \\
  +d(x,t)^2v^2(\pi(x,t),t)+R(d(x,t)^3)
\end{multline}
and the pressure $p$ as \eqref{E:P_Expand}.
We need to expand $u$ up to the second order term in $d(x,t)$ since the momentum equation \eqref{E:NS_Momentum} has the second order derivatives of $u$.

\begin{theorem} \label{T:NS_Limit}
  Let $u$ and $p$ satisfy the Navier-Stokes equations \eqref{E:NS_Momentum}--\eqref{E:NS_B_Tan} in the moving thin domain $\Omega_\varepsilon(t)$.
  Then the normal component of the zeroth order term $v$ in the expansion \eqref{E:NS_U_Expand} is equal to the outward normal velocity of the moving surface $\Gamma(t)$, i.e. $v\cdot\nu=V_\Gamma^N$.
  Moreover, the velocity field $v$ and the zeroth and first order terms $q$ and $q^1$ in the expansion \eqref{E:P_Expand} satisfy
  \begin{align}
    \partial^\bullet_vv+\nabla_\Gamma q+q^1\nu = 2\mu_0\mathrm{div}_\Gamma(P_\Gamma D^{tan}(v)P_\Gamma) \quad&\text{on}\quad S_T, \label{E:NS_Momentum_Limit} \\
    \mathrm{div}_\Gamma v = 0 \quad&\text{on}\quad S_T. \label{E:NS_Continuity_Limit}
  \end{align}
  Here $D^{tan}(v)$ is the tangential strain rate given by \eqref{E:Tan_Strain}.
\end{theorem}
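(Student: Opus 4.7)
The plan is to carry out the same substitute-and-match strategy used in the proof of Theorem~\ref{T:Eu_Limit}, but now with $u$ expanded one order further in $d$ (since $\mu_0\Delta u$ involves second-order derivatives) and with additional information extracted from the tangential Navier condition \eqref{E:NS_B_Tan}. First I would substitute \eqref{E:NS_U_Expand} into the impermeability condition \eqref{E:NS_B_Norm} and, using \eqref{E:N_Vect_Dom}--\eqref{E:N_Vel_Dom}, match powers of $\varepsilon$ on the two components $d=\pm\varepsilon$ of $\partial\Omega_\varepsilon(t)$ to obtain
\[
  v\cdot\nu=V_\Gamma^N,\qquad v^1\cdot\nu=0,\qquad v^2\cdot\nu=0,
\]
the first of which is the first assertion of the theorem.

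Next, using Lemma~\ref{L:Diff_Composite} and \eqref{E:Nabla_Dist}, a direct calculation gives
\[
  \nabla u = \nabla_\Gamma v+\nu\otimes v^1+d\bigl[A\nabla_\Gamma v+\nabla_\Gamma v^1+2\nu\otimes v^2\bigr]+R(d^2),
\]
and hence $D(u)=S+dS^1+R(d^2)$ with the symmetric matrices $S=D^{tan}(v)+\tfrac{1}{2}(\nu\otimes v^1+v^1\otimes\nu)$ and an explicit $S^1$; taking the trace of the $O(1)$ piece immediately yields \eqref{E:NS_Continuity_Limit}. Substituting \eqref{E:NS_U_Expand} into \eqref{E:NS_B_Tan} and separately matching coefficients in $\varepsilon$ on the two boundary components produces $P_\Gamma S\nu=0$ and $P_\Gamma S^1\nu=0$; combined with $v^1\cdot\nu=0$, the former gives the key formula $v^1=-2P_\Gamma D^{tan}(v)\nu$. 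The left-hand side of \eqref{E:NS_Momentum} is handled verbatim as in Theorem~\ref{T:Eu_Limit} and contributes $\partial^\bullet_v v+\nabla_\Gamma q+q^1\nu+R(d)$; for the right-hand side I would rewrite $\mu_0\Delta u=2\mu_0\,\mathrm{div}\,D(u)$ and apply Lemma~\ref{L:Div_Matrix} to obtain $2\mu_0\bigl[\mathrm{div}_\Gamma S+(S^1)^T\nu\bigr]+R(d)$. The heart of the proof is therefore the identity
\[
  \mathrm{div}_\Gamma S+(S^1)^T\nu = \mathrm{div}_\Gamma\bigl(P_\Gamma D^{tan}(v)P_\Gamma\bigr).
\]

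To establish it, substituting $v^1=-2P_\Gamma D^{tan}(v)\nu$ into the definition of $S$ produces the clean decomposition $S=P_\Gamma D^{tan}(v)P_\Gamma+\alpha\,\nu\otimes\nu$ with scalar $\alpha=D^{tan}(v)\nu\cdot\nu$, while a short computation using $\mathrm{div}_\Gamma\nu=-H$, $A\nu=0$ and $(\nu\cdot\nabla_\Gamma)\nu=0$ gives $\mathrm{div}_\Gamma(\alpha\,\nu\otimes\nu)=-\alpha H\nu$. Symmetry of $S^1$ together with $P_\Gamma S^1\nu=0$ then yields $(S^1)^T\nu=(S^1\nu\cdot\nu)\nu$, and both scalars $\alpha$ and $S^1\nu\cdot\nu$ are shown to vanish by a term-by-term check using $v\cdot\nu=V_\Gamma^N$, $v^k\cdot\nu=0$ for $k=1,2$, $A\nu=0$ and $(\nu\cdot\nabla_\Gamma)\nu=0$. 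The principal obstacle is precisely this last reconciliation step: the viscous expansion naturally produces two a priori nonzero normal corrections (one from the decomposition of $S$, one from $(S^1)^T\nu$), and both must vanish outright rather than merely be absorbed into $q^1$, in order to recover the Boussinesq--Scriven form on the right-hand side of \eqref{E:NS_Momentum_Limit}. This cancellation relies on the full strength of both boundary conditions together with the algebraic identities $A\nu=0$ and $(\nu\cdot\nabla_\Gamma)\nu=0$ from Lemma~\ref{L:Weingarten}; everything else is routine bookkeeping following the template of Theorem~\ref{T:Eu_Limit}.
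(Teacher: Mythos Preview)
Your proposal is correct and follows essentially the same route as the paper: expand $u$ to second order, read off $v\cdot\nu=V_\Gamma^N$, $v^1\cdot\nu=v^2\cdot\nu=0$ from \eqref{E:NS_B_Norm}, compute $D(u)=S+dS^1+R(d^2)$, extract $P_\Gamma S\nu=0$ and $P_\Gamma S^1\nu=0$ from \eqref{E:NS_B_Tan}, use the former to get $v^1=-2P_\Gamma D^{tan}(v)\nu$, rewrite $\mu_0\Delta u=2\mu_0\,\mathrm{div}\,D(u)$, and apply Lemma~\ref{L:Div_Matrix}. The only organizational difference is in the final reconciliation: the paper observes directly that $S^1\nu=\tfrac{1}{2}(A\nabla_\Gamma v+\nabla_\Gamma v^1)\nu+v^2$ is tangential, so $P_\Gamma S^1\nu=0$ forces $S^1\nu=0$ outright, and similarly simplifies $S$ to $P_\Gamma D^{tan}(v)P_\Gamma+(\nu\otimes\nu)D^{tan}(v)(\nu\otimes\nu)$ and notes the second summand vanishes since $(\nu\otimes\nu)\nabla_\Gamma v=0$; you instead isolate the scalars $\alpha=D^{tan}(v)\nu\cdot\nu$ and $S^1\nu\cdot\nu$ and kill them term-by-term. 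Both arrive at the same conclusion with the same ingredients, and your detour through $\mathrm{div}_\Gamma(\alpha\,\nu\otimes\nu)=-\alpha H\nu$ is harmless but unnecessary once $\alpha=0$.
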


\begin{remark} \label{R:NS_Limit_Comparison}
  As in Remark~\ref{R:Eu_Limit_Comparison}, the normal component of $v$ is equal to $V_\Gamma^N$, the tangential component of $v$ and the scalar function $q$ are determined by
  \begin{align} \label{E:NS_Limit_Tan_move}
    P_\Gamma\partial^\bullet_vv+\nabla_\Gamma q = 2\mu_0P_\Gamma\mathrm{div}_\Gamma(P_\Gamma D^{tan}(v)P_\Gamma), \quad \mathrm{div}_\Gamma v = 0 \quad\text{on}\quad S_T,
  \end{align}
  and the scalar function $q^1$ is given by the normal component of \eqref{E:NS_Momentum_Limit}.
  The tangential system \eqref{E:NS_Limit_Tan_move} is the same as the tangential incompressible Navier-Stokes-Scriven-Koba (NSSK) system in~\cite{KLG16} with constant density (see (4.4) in~\cite{KLG16}).

  When $\Gamma(t)=\Gamma$ is fixed in time, the tangential system \eqref{E:NS_Limit_Tan_move} is the same as the incompressible Navier-Stokes system on a fixed manifold derived by Taylor~\cite{T92}
  \begin{align} \label{E:NS_Manifold}
    \partial_tv+\overline{\nabla}_vv+\nabla_\Gamma q = \mu_0(\Delta_Bv+Kv), \quad \mathrm{div}_\Gamma v = 0 \quad\text{on}\quad \Gamma\times(0,T)
  \end{align}
  for a tangential velocity field $v$ on $\Gamma$, although the authors of~\cite{KLG16} claim that the system \eqref{E:NS_Limit_Tan_move} on the stationary surface $\Gamma$ is different from Taylor's model \eqref{E:NS_Manifold} (see Remark~\ref{R:Deformation_Tensor} below).
  Indeed, when the surface $\Gamma$ is stationary, i.e. $V_\Gamma^N=0$, the velocity field $v$ in the system \eqref{E:NS_Limit_Tan_move} is tangential and by applying \eqref{E:Projection_Material_Stationary} to the left-hand side of the first equation in \eqref{E:NS_Limit_Tan_move} we obtain
  \begin{align*}
    \partial_tv+\overline{\nabla}_vv+\nabla_\Gamma q = 2\mu_0P_\Gamma\mathrm{div}_\Gamma(P_\Gamma D^{tan}(v)P_\Gamma), \quad \mathrm{div}_\Gamma v = 0 \quad\text{on}\quad \Gamma\times(0,T).
  \end{align*}
  Moreover, since $v$ is tangential and satisfies $\mathrm{div}_\Gamma v=0$, the right-hand side of the first equation in the above system is the same as that in Taylor's system \eqref{E:NS_Manifold} by \eqref{E:Compare_Viscous}.
  Hence the tangential incompressible Navier-Stokes system \eqref{E:NS_Limit_Tan_move} on the stationary surface $\Gamma$ agrees with the system \eqref{E:NS_Manifold} given by Taylor.

  As in the case of the Euler equations (see Remark~\ref{R:Eu_Limit_Comparison}), when the surface is stationary the function $q^1$ in \eqref{E:NS_Momentum_Limit} is given by
  \begin{align*}
    q^1 = \{-\partial^\bullet_vv+2\mu_0\mathrm{div}_\Gamma(P_\Gamma D^{tan}(v)P_\Gamma)\}\cdot\nu = -Av\cdot v+2\mu_0\mathrm{tr}[A\nabla_\Gamma v],
  \end{align*}
  where the second equality follows from \eqref{E:Viscous_General} and \eqref{E:Eu_Normal_Limit_Fixed}.
  From this formula we observe that $q^1$ does not vanish in general even if the surface is stationary.
\end{remark}

\begin{remark} \label{R:Deformation_Tensor}
  The authors of \cite{KLG16} argue that the tangential incompressible Navier-Stokes system \eqref{E:NS_Limit_Tan_move} on a stationary surface $\Gamma$ is different from the Navier-Stokes system \eqref{E:NS_Manifold} on a manifold given by Taylor~\cite{T92}, which is inconsistent with our argument in Remark~\ref{R:NS_Limit_Comparison}.
  Unfortunately, there seems to be a flaw in derivation of Taylor's system \eqref{E:NS_Manifold} in~\cite[Section~5]{KLG16}.
  The authors of~\cite{KLG16} applied an energetic variational approach with the dissipation energy given by the tangential strain rate $D^{tan}(v)=\{\nabla_\Gamma v+(\nabla_\Gamma v)^T\}/2$ to obtain \eqref{E:NS_Manifold}.
  In their derivation of \eqref{E:NS_Manifold} they claim that $P_\Gamma\mathrm{div}_\Gamma\bigl(P_\Gamma D^{tan}(v)\bigr)=\Delta_B v+Kv$ holds on $\Gamma$ when $\Gamma$ is stationary and $v$ is tangential and satisfies $\mathrm{div}_\Gamma v=0$ (see the argument after~\cite[Theorem~5.1]{KLG16}).
  However, we have
  \begin{align*}
    2P_\Gamma\mathrm{div}_\Gamma\bigl(P_\Gamma D^{tan}(v)\bigr) = \Delta_B v+Kv-A^2v
  \end{align*}
  for any tangential vector field $v$ on $\Gamma$ satisfying $\mathrm{div}_\Gamma v=0$, since the sum of the first two terms on the right-hand side is equal to $2P_\Gamma\mathrm{div}_\Gamma(P_\Gamma D^{tan}(v)P_\Gamma)$ by \eqref{E:Compare_Viscous} and
  \begin{multline*}
    2P_\Gamma\mathrm{div}_\Gamma\bigl(P_\Gamma D^{tan}(v)\bigr)-2P_\Gamma\mathrm{div}_\Gamma(P_\Gamma D^{tan}(v)P_\Gamma) \\
    = 2P_\Gamma\mathrm{div}_\Gamma\bigl(P_\Gamma D^{tan}(v)(\nu\otimes\nu)\bigr) = -A^2v
  \end{multline*}
  holds by the same calculations as in the proof of Lemma~\ref{L:Viscous_General} (see Appendix~\ref{S:Appendix_A}).

  It seems that their choice of the dissipation energy for derivation of \eqref{E:NS_Manifold} comes from a subtle misunderstanding of the strain rate tensor in Taylor's model, which is called the deformation tensor in~~\cite{MT01,T92}.
  Taylor~\cite{T92} defined the deformation tensor $\mathrm{Def}\,v$ for a tangential vector field $v$ on $\Gamma$ as a symmetric tensor field of type $(0,2)$ on the manifold $\Gamma$ (see e.g.~\cite[Chapter~12]{L13bk} for tensor fields) satisfying
  \begin{align} \label{E:Def_Deformation}
    (\mathrm{Def}\,v)(X,Y) = \frac{1}{2}\Bigl(\overline{\nabla}_Xv\cdot Y+X\cdot\overline{\nabla}_Yv\Bigr), \quad X,Y\in C^\infty(T\Gamma),
  \end{align}
  where $C^\infty(T\Gamma)$ is the set of all smooth tangential vector fields on $\Gamma$.
  (See also (2.3) in~\cite{MT01}. Note that (2.3) in~\cite{MT01} is a formula for one-forms on $\Gamma$ and here we identify tangential vector fields on $\Gamma$ with one-forms on $\Gamma$ via raising and lowering indices.)
  Let us show that the right-hand side of \eqref{E:Def_Deformation} is equal to $\{D^{tan}(v)X\}\cdot Y$.
  By the Gauss formula \eqref{E:Gauss_Formula_Another} and the fact that the covariant derivative $\overline{\nabla}_Xv$ is tangential,
  \begin{align*}
    \overline{\nabla}_Xv = P_\Gamma\{(X\cdot\nabla_\Gamma)v\} = P_\Gamma(\nabla_\Gamma v)^TX \quad\text{on}\quad \Gamma,
  \end{align*}
  where the second equality just follows from our notation on the tangential gradient matrix (see Section~\ref{S:Preliminaries}).
  From this formula and the fact that $P_\Gamma$ is symmetric and that $Y$ is tangential it follows that
  \begin{align*}
    \overline{\nabla}_Xv\cdot Y = \{P_\Gamma(\nabla_\Gamma v)^TX\}\cdot Y = \{(\nabla_\Gamma v)^TX\}\cdot(P_\Gamma Y) = \{(\nabla_\Gamma v)^TX\}\cdot Y.
  \end{align*}
  Similarly we have $X\cdot\overline{\nabla}_Yv = X\cdot\{(\nabla_\Gamma v)^TY\} = \{(\nabla_\Gamma v)X\}\cdot Y$ and thus
  \begin{align*}
    \frac{1}{2}\Bigl(\overline{\nabla}_Xv\cdot Y+X\cdot\overline{\nabla}_Yv\Bigr) = \frac{1}{2}\Bigl(\{\nabla_\Gamma v+(\nabla_\Gamma v)^T\}X\Bigr)\cdot Y = \{D^{tan}(v)X\}\cdot Y.
  \end{align*}
  Therefore, for any $X$, $Y\in C^\infty(T\Gamma)$ the equality
  \begin{align} \label{E:Wrong_Deformation}
    (\mathrm{Def}\,v)(X,Y) = \{D^{tan}(v)X\}\cdot Y
  \end{align}
  holds.
  Therefore, the deformation tensor $\mathrm{Def}\,v$ can be identified with the restriction on $C^\infty(T\Gamma)\times C^\infty(T\Gamma)$ of the symmetric bilinear map
  \begin{align*}
    \mathcal{T}_{D^{tan}(v)}\colon C^\infty(\Gamma)^3\times C^\infty(\Gamma)^3 \to C^\infty(\Gamma), \quad (F,G) \mapsto \{D^{tan}(v)F\}\cdot G.
  \end{align*}
  Here $C^\infty(\Gamma)$ denotes the set of all smooth functions on $\Gamma$ and $C^\infty(\Gamma)^3$ is the set of all smooth three-dimensional vector fields on $\Gamma$ not necessarily tangential.
  However, it does not mean that $\mathrm{Def}\,v$ can be identified with the matrix $D^{tan}(v)$.
  Since $\mathrm{Def}\,v$ is a tensor field of type $(0,2)$ on the manifold $\Gamma$, for any $X\in C^\infty(T\Gamma)$ the mapping
  \begin{align*}
    (\mathrm{Def}\,v)(X,\cdot)\colon C^\infty(T\Gamma)\to C^\infty(\Gamma), \quad Y \mapsto (\mathrm{Def}\,v)(X,Y)
  \end{align*}
  is a linear map from $C^\infty(T\Gamma)$ into $C^\infty(\Gamma)$, i.e. a one-form on $\Gamma$.
  By identifying one-forms on $\Gamma$ with tangential vector fields on $\Gamma$ via raising and lowering indices, we may consider $(\mathrm{Def}\,v)(X,\cdot)=(\mathrm{Def}\,v)X$ as a tangential vector field on $\Gamma$.
  On the other hand, for a tangential vector field $X$ on $\Gamma$ the vector field $D^{tan}(v)X$ is not tangential in general, even if $v$ is tangential to $\Gamma$.
  Indeed, since $(\nabla_\Gamma v)^T\nu=(\nabla_\Gamma v)^TP_\Gamma\nu=0$ and $(\nabla_\Gamma v)\nu=-(\nabla_\Gamma\nu)v=Av$, where the second relation follows from the fact that $v$ is tangential, we have
  \begin{align*}
    D^{tan}(v)\nu = \frac{1}{2}\{(\nabla_\Gamma v)\nu+(\nabla_\Gamma v)^T\nu\} = \frac{1}{2}Av.
  \end{align*}
  From this equality and the symmetry of the matrix $D^{tan}(v)$ it follows that
  \begin{align*}
    D^{tan}(v)X\cdot\nu = X\cdot D^{tan}(v)\nu = \frac{1}{2}X\cdot Av
  \end{align*}
  for any tangential vector field $X$ on $\Gamma$.
  The last term does not vanish and thus the vector field $D^{tan}(v)X$ is not tangential on $\Gamma$ in general.

  To give a proper interpretation of the deformation tensor as a matrix, we observe that in \eqref{E:Wrong_Deformation} the vector fields $X$ and $Y$ are tangential to $\Gamma$ and thus
  \begin{align*}
    \{D^{tan}(v)X\}\cdot Y = \{D^{tan}(v)P_\Gamma X\}\cdot(P_\Gamma Y) = \{P_\Gamma D^{tan}(v)P_\Gamma X\}\cdot Y
  \end{align*}
  by the symmetry of the orthogonal projection $P_\Gamma$.
  Then \eqref{E:Wrong_Deformation} becomes
  \begin{align*}
    (\mathrm{Def}\,v)(X,Y) = \{P_\Gamma D^{tan}(v)P_\Gamma X\}\cdot Y
  \end{align*}
  for all tangential vector fields $X$ and $Y$ on $\Gamma$.
  Moreover, the matrix $P_\Gamma D^{tan}(v)P_\Gamma$ is symmetric and for any $X\in C^\infty(T\Gamma)$ the vector field $P_\Gamma D^{tan}(v)P_\Gamma X$ is tangential to $\Gamma$.
  Therefore, we may identify the deformation tensor
  \begin{align*}
    \mathrm{Def}\,v=\mathcal{T}_{D^{tan}(v)}|_{C^\infty(T\Gamma)\times C^\infty(T\Gamma)}\colon C^\infty(T\Gamma)\times C^\infty(T\Gamma) \to C^\infty(\Gamma)
  \end{align*}
  with the symmetric matrix $P_\Gamma D^{tan}(v)P_\Gamma$.

  The matrix $P_\Gamma D^{tan}(v)P_\Gamma$ is called a projected strain rate in~\cite{KLG16} and employed to define the dissipation energy in their energetic variational method for derivation of the incompressible NSSK system on the moving surface (see~\cite[Lemma~3.4 and Section~4]{KLG16}).
  Therefore, the strain rate tensor in Taylor's system~\eqref{E:NS_Manifold} is the same as that in the tangential incompressible Navier-Stokes system~\eqref{E:NS_Limit_Tan_move}.
\end{remark}

\begin{proof}[Proof of Theorem~\ref{T:NS_Limit}]
  As in the proof of Theorem~\ref{T:Eu_Limit} we use the abbreviations \eqref{E:Abbrev}.
  Due to the first boundary condition \eqref{E:NS_B_Norm} we have
  \begin{align}
    v(\pi,t)\cdot\nu(\pi,t) &= V_\Gamma^N(\pi,t), \label{E:NS_V0_N} \\
    v^1(\pi,t)\cdot\nu(\pi,t) &= 0, \label{E:NS_V1_N} \\
    v^2(\pi,t)\cdot\nu(\pi,t) &= 0 \label{E:NS_V2_N}
  \end{align}
  and the surface divergence-free condition \eqref{E:NS_Continuity_Limit} for $v$ by the same argument as in the proof of Theorem~\ref{T:Eu_Limit}.
  Moreover, we already calculated the expansion of the left-hand side of \eqref{E:NS_Momentum} in the proof of Theorem~\ref{T:Eu_Limit}:
  \begin{multline} \label{E:NS_Left_Exp}
    \partial_tu(x,t)+[(u\cdot\nabla)u](x,t)+\nabla p(x,t) \\
    = \partial^\bullet_vv(\pi,t)+\nabla_\Gamma q(\pi,t)+q^1(\pi,t)\nu(\pi,t)+R(d).
  \end{multline}
  Let us show that the expansion of the viscous term $\Delta u$ is of the form
  \begin{align} \label{E:NS_Visc_Exp}
    \Delta u(x,t) = 2[\mathrm{div}_\Gamma(P_\Gamma D^{tan}(v)P_\Gamma)](\pi,t)+R(d).
  \end{align}
  Since $\Delta u=2\mathrm{div}\,D(u)$ holds by the divergence-free condition \eqref{E:NS_Continuity}, we consider the expansion in powers of $d$ of the strain rate tensor $D(u)$.
  We differentiate both sides of \eqref{E:NS_U_Expand} with respect to $x$ and apply \eqref{E:Nabla_Dist} and \eqref{E:Nabla_Composite} to get
  \begin{multline} \label{E:NS_Nabla_U}
    \nabla u(x,t) = \nabla_\Gamma v(\pi,t)+[\nu\otimes v^1](\pi,t) \\
    +d(x,t)\{[A\nabla_\Gamma v](\pi,t)+\nabla_\Gamma v^1(\pi,t)+2[\nu\otimes v^2](\pi,t)\}+R(d^2).
  \end{multline}
  Hence the strain rate tensor of $u$ is expressed as
  \begin{align} \label{E:NS_Strain_Exp}
    D(u)(x,t) = S(\pi,t)+d(x,t)S^1(\pi,t)+R(d^2),
  \end{align}
  where
  \begin{align}
    S &:= D^{tan}(v)+\frac{\nu\otimes v^1+v^1\otimes\nu}{2}, \label{E:NS_Strain0} \\
    S^1 &:= \frac{A\nabla_\Gamma v+(A\nabla_\Gamma v)^T}{2}+D^{tan}(v^1)+\nu\otimes v^2+v^2\otimes\nu. \label{E:NS_Strain1}
  \end{align}
  Let us write the second boundary condition \eqref{E:NS_B_Tan} in terms of $S$ and $S^1$.
  By \eqref{E:N_Vect_Dom} and \eqref{E:N_Vel_Dom} the boundary condition \eqref{E:NS_B_Tan} reads
  \begin{align*}
    P_\Gamma(\pi,t)D(u)(x,t)\nu(\pi,t) = 0, \quad x\in\partial\Omega_\varepsilon(t).
  \end{align*}
  We substitute \eqref{E:NS_Strain_Exp} for the above $D(u)(x,t)$ to obtain
  \begin{align*}
    P_\Gamma(\pi,t)S(\pi,t)\nu(\pi,t)\pm\varepsilon P_\Gamma(\pi,t)S^1(\pi,t)\nu(\pi,t)+O(\varepsilon^2) = 0
  \end{align*}
  according to $d(x,t)=\pm\varepsilon$ (double-sign corresponds).
  Since the matrices $S(\pi,t)$, $S^1(\pi,t)$, $P_\Gamma(\pi,t)$, and the vector $\nu(\pi,t)$ are independent of $\varepsilon$, we have
  \begin{align}
    P_\Gamma(\pi,t)S(\pi,t)\nu(\pi,t) &= 0  \label{E:NS_Strain0_N} \\
    P_\Gamma(\pi,t)S^1(\pi,t)\nu(\pi,t) &= 0. \label{E:NS_Strain1_N}
  \end{align}
  Substituting \eqref{E:NS_Strain0} for $S$ in \eqref{E:NS_Strain0_N} and observing
  \begin{align*}
    (\nu\otimes v^1)\nu = (v^1\cdot\nu)\nu = 0,\quad (v^1\otimes\nu)\nu = (\nu\cdot\nu)v^1 = v^1, \quad P_\Gamma v^1 = v^1
  \end{align*}
  by \eqref{E:NS_V1_N} we get
  \begin{align} \label{E:NS_V1_Full}
    v^1(\pi,t) = -2P_\Gamma(\pi,t)D^{tan}(v)(\pi,t)\nu(\pi,t).
  \end{align}
  Moreover, we multiply $\nu$ by $S^1$ given by \eqref{E:NS_Strain1} and apply
  \begin{align*}
    (A\nabla_\Gamma v)^T\nu = (\nabla_\Gamma v)^TA\nu = 0, \quad (\nabla_\Gamma v^1)^T\nu = (\nabla_\Gamma v^1)^TP_\Gamma\nu = 0
  \end{align*}
  by the symmetry of $A$ and $P_\Gamma$, $\nabla_\Gamma=P_\Gamma\nabla_\Gamma$, and \eqref{E:W_Normal}, and then use $(\nu\otimes v^2)\nu = 0$ and $(v^2\otimes\nu)\nu = v^2$ by \eqref{E:NS_V2_N} to obtain
  \begin{align} \label{E:NS_S1nu_Express}
    S^1\nu = \frac{1}{2}(A\nabla_\Gamma v+\nabla_\Gamma v^1)\nu+v^2.
  \end{align}
  It is tangential to $\Gamma(t)$ by $\nabla_\Gamma=P_\Gamma\nabla$, \eqref{E:W_Normal} and \eqref{E:NS_V2_N}.
  Hence \eqref{E:NS_Strain1_N} yields
  \begin{align}
    S^1(\pi,t)\nu(\pi,t) = 0. \label{E:NS_S1nu_Full}
  \end{align}
  Now we apply the formula \eqref{E:Div_Matrix} to the expansion \eqref{E:NS_Strain_Exp}.
  Then by the symmetry of $S^1$ (see \eqref{E:NS_Strain1}) and the equality \eqref{E:NS_S1nu_Full} we get
  \begin{align} \label{E:NS_DIV_Strain}
    \mathrm{div}\,D(u)(x,t) = \mathrm{div}_\Gamma S(\pi,t)+R(d).
  \end{align}
  Let us write $S$ in terms of $v$.
  Substituting \eqref{E:NS_V1_Full} for \eqref{E:NS_Strain0}, using the formulas
  \begin{align*}
    (Ma)\otimes b = M(a\otimes b), \quad a\otimes(Mb) = (a\otimes b)M^T
  \end{align*}
  for a square matrix $M$ of order three and three-dimensional vectors $a$ and $b$, and observing $\bigl(P_\Gamma D^{tan}(v)\bigr)^T=D^{tan}(v)P_\Gamma$ by the symmetry of $P_\Gamma$ and $D^{tan}(v)$, we have
  \begin{align*}
    S &= D^{tan}(v)-(\nu\otimes\nu)D^{tan}(v)P_\Gamma-P_\Gamma D^{tan}(v)(\nu\otimes\nu) \\
    &= P_\Gamma D^{tan}(v)P_\Gamma+(\nu\otimes\nu)D^{tan}(v)(\nu\otimes\nu).
  \end{align*}
  Here the second term on the last line vanishes by $(\nu\otimes\nu)\nabla_\Gamma v = (\nabla_\Gamma v)^T(\nu\otimes\nu) = 0$.
  Hence it follows that
  \begin{align} \label{E:NS_Strain0_Full}
    S(\pi,t) = P_\Gamma(\pi,t)D^{tan}(v)(\pi,t)P_\Gamma(\pi,t)
  \end{align}
  and we obtain \eqref{E:NS_Visc_Exp} by applying \eqref{E:NS_DIV_Strain} and \eqref{E:NS_Strain0_Full} to $\Delta u=2\mathrm{div}\,D(u)$.
  Finally, we substitute \eqref{E:NS_Left_Exp} and \eqref{E:NS_Visc_Exp} for the momentum equation \eqref{E:NS_Momentum} to get
  \begin{multline*}
    \partial^\bullet_vv(\pi,t)+\nabla_\Gamma q(\pi,t)+q^1(\pi,t)\nu(\pi,t)+R(d) \\
    = 2\mu_0[\mathrm{div}_\Gamma(P_\Gamma D^{tan}(v)P_\Gamma)](\pi,t)+R(d).
  \end{multline*}
  Since all terms except for $R(d)$ are independent of $d$, we conclude that the equation \eqref{E:NS_Momentum_Limit} should be satisfied.
\end{proof}

\begin{remark} \label{R:NS_Limit_Partial_Slip}
  We may replace the perfect slip condition \eqref{E:NS_B_Tan} by the partial slip condition
  \begin{align*}
    [D(u)\nu_\varepsilon]_{\text{tan}}+k(u^T-v_\Omega^T) = 0 \quad\text{on}\quad \partial_\ell Q_{\varepsilon,T},
  \end{align*}
  where $u^T=(I_3-\nu_\varepsilon\otimes\nu_\varepsilon)u$, $k>0$ is a constant, and $v_\Omega^T(\cdot,t)$ is a given tangential velocity field on $\partial\Omega_\varepsilon(t)$.
  However, it makes the limit velocity overdetermined.
  Indeed, suppose that $v_\Omega^T$ is given by
  \begin{align*}
    v_\Omega^T(x,t) =
    \begin{cases}
      v_{\text{outer}}(\pi(x,t),t) &\text{if}\quad d(x,t) = \varepsilon, \\
      v_{\text{inner}}(\pi(x,t),t) &\text{if}\quad d(x,t) = -\varepsilon,
    \end{cases}
  \end{align*}
  where $v_{\text{outer}}(\cdot,t)$ and $v_{\text{inner}}(\cdot,t)$ are given tangential velocity fields on $\Gamma(t)$. Then the same calculations as in the proof of Theorem~\ref{T:NS_Limit} yield
  \begin{align*}
    v = V_\Gamma^N\nu+\frac{v_{\text{outer}}+v_{\text{inner}}}{2}.
  \end{align*}
  Hence the limit velocity $v$ is completely determined by given velocities while it should satisfy similar equations to \eqref{E:NS_Momentum_Limit} and \eqref{E:NS_Continuity_Limit}.
\end{remark}

\begin{remark} \label{R:NS_Limit_Viscous_Another}
  In the proof of Theorem~\ref{T:NS_Limit} we obtained the expansion \eqref{E:NS_Visc_Exp} of the viscous term $\Delta u$ by using the expansion of the strain rate tensor $D(u)$.
  Here let us expand $\Delta u$ by direct calculations.
  In what follows, we abbreviate $\pi(x,t)$ and $R(d(x,t))$ to $\pi$ and $R(d)$ for $x\in\Omega_\varepsilon(t)$ and suppress the argument $t$.
  By \eqref{E:NS_Nabla_U} the gradient of the $j$-th component of $u$ ($j=1,2,3$) is
  \begin{align} \label{E:NS_ Nabla_Uj}
    \nabla u_j(x) = \nabla_\Gamma v_j(\pi)+v_j^1(\pi)\nu(\pi)+d(x)F_j(\pi)+R(d^2),
  \end{align}
  where $F_j = A\nabla_\Gamma v_j+\nabla_\Gamma v_j^1+2v_j^2\nu$.
  We differentiate both sides of \eqref{E:NS_ Nabla_Uj} with respect to $x$ and apply \eqref{E:Nabla_Dist}, \eqref{E:Nabla_Composite}, and $\nabla_\Gamma\nu=-A$ to get
  \begin{align*}
    \nabla^2 u_j(x) = \nabla_\Gamma^2v_j(\pi)+[\nabla_\Gamma v_j^1\otimes\nu](\pi)-v_j^1(\pi)A(\pi)+[\nu\otimes F_j](\pi)+R(d).
  \end{align*}
  Taking the trace of both sides and observing $A\nabla_\Gamma v_j\cdot\nu=\nabla_\Gamma v_j^1\cdot\nu=0$ we obtain
  \begin{align*}
    \Delta u_j(x) = \Delta_\Gamma v_j(\pi)-v_j^1(\pi)H(\pi)+2v_j^2(\pi)+R(d)
  \end{align*}
  for each $j=1,2,3$ and thus
  \begin{align*}
    \Delta u_j(x) = \Delta_\Gamma v(\pi)-H(\pi)v^1(\pi)+2v^2(\pi)+R(d).
  \end{align*}
  Let us express $v^1$ and $v^2$ in terms of $v$.
  The first order term $v^1$ is given by \eqref{E:NS_V1_Full}, $\nabla_\Gamma=P_\Gamma\nabla_\Gamma$, and $(\nabla_\Gamma v)^T\nu=(\nabla_\Gamma v)^TP_\Gamma\nu=0$:
  \begin{align*}
    v^1 = -2P_\Gamma D^{tan}(v)\nu = -(\nabla_\Gamma v)\nu.
  \end{align*}
  By \eqref{E:NS_S1nu_Express} and \eqref{E:NS_S1nu_Full} we can represent $v^2$ in terms of $v$ and $v^1$ as
  \begin{align*}
    v^2 = -\frac{1}{2}(A\nabla_\Gamma v+\nabla_\Gamma v^1)\nu.
  \end{align*}
  From this it follows that $v^2=0$ since $v^1=-(\nabla_\Gamma v)\nu$ is tangential and thus
  \begin{align*}
    (\nabla_\Gamma v^1)\nu = \nabla_\Gamma(v^1\cdot\nu)-(\nabla_\Gamma\nu)v^1 = Av^1 = -A(\nabla_\Gamma v)\nu.
  \end{align*}
  Hence we obtain another expansion formula of the viscous term
  \begin{align} \label{E:NS_Visc_Exp_Another}
     \Delta u(x) = \Delta_\Gamma v(\pi)+[H(\nabla_\Gamma v)\nu](\pi)+R(d).
   \end{align}
   Comparing the expansions \eqref{E:NS_Visc_Exp} and \eqref{E:NS_Visc_Exp_Another} we expect that the equality
   \begin{align} \label{E:NS_Visc_Equiv}
     2\mathrm{div}_\Gamma(P_\Gamma D^{tan}(v)P_\Gamma) = \Delta_\Gamma v+H(\nabla_\Gamma v)\nu
   \end{align}
   holds for the limit velocity $v$.
   Let us prove this equality.
   By the formula \eqref{E:Viscous_General} for the left-hand side, the proof of \eqref{E:NS_Visc_Equiv} reduces to showing
  \begin{align} \label{E:NS_Visc_Reduced}
    \nabla_\Gamma(\mathrm{div}_\Gamma v) = 0, \quad 2\mathrm{tr}[A\nabla_\Gamma v] = (\Delta_\Gamma v)\cdot\nu.
  \end{align}
  The first equality follows from the surface divergence-free condition \eqref{E:NS_Continuity_Limit} for the limit velocity $v$.
  To obtain the second equality we need to observe the expansion of the divergence-free condition \eqref{E:NS_Continuity} in powers of the signed distance $d$ up to the first order term.
  Taking the trace of \eqref{E:NS_Nabla_U} and using $v^1\cdot\nu=0$ and $v^2=0$ we have
  \begin{align*}
    \mathrm{div}\,u(x) = \mathrm{div}_\Gamma v(\pi)+d(x)\{\mathrm{tr}[A\nabla_\Gamma v](\pi)+\mathrm{div}_\Gamma v^1(\pi)\}+R(d^2).
  \end{align*}
  Since the left-hand side vanishes for all $x\in\Omega_\varepsilon(t)$ by \eqref{E:NS_Continuity}, observing the first order term in $d(x)$ on the right-hand side we obtain
  \begin{align} \label{E:NS_Visc_Div_First}
    \mathrm{tr}[A\nabla_\Gamma v]+\mathrm{div}_\Gamma v^1 = 0.
  \end{align}
  To the second term on the left-hand side we apply $v^1=-(\nabla_\Gamma v)\nu$.
  Then since
  \begin{align*}
    \mathrm{div}_\Gamma[(\nabla_\Gamma v)\nu] &= (\mathrm{div}_\Gamma\nabla_\Gamma v)\cdot\nu+\mathrm{tr}[(\nabla_\Gamma\nu)^T\nabla_\Gamma v], \\
    &= (\Delta_\Gamma v)\cdot\nu-\mathrm{tr}[A^T\nabla_\Gamma v]
  \end{align*}
  and the Weingarten map $A$ is symmetric, the equality \eqref{E:NS_Visc_Div_First} becomes
  \begin{align*}
    2\mathrm{tr}[A\nabla_\Gamma v]-(\Delta_\Gamma v)\cdot\nu = 0.
  \end{align*}
  Hence the second equality in \eqref{E:NS_Visc_Reduced} holds and \eqref{E:NS_Visc_Equiv} follows.
\end{remark}

\section{Energy identities} \label{S:Energy_Identity}
The purpose of this section is to find a relation between energy identities of the Euler and Navier-Stokes equations in the moving thin domains and those of the limit equations on the moving surface.
We first derive the energy identities from the equations and then show that the energy identities of the limit surface equations are also derived as thin width limits of those of the original bulk equations.

\subsection{Euler equations}

\begin{lemma} \label{L:Energy_Eu_Domain}
  Let $u$ and $p$ satisfy the Euler equations \eqref{E:Eu_Momentum}--\eqref{E:Eu_Boundary} in the moving thin domain $\Omega_\varepsilon(t)$.
  Then we have
  \begin{align} \label{E:Energy_Eu_Domain}
    \frac{d}{dt}\int_{\Omega_\varepsilon(t)}\frac{|u|^2}{2}\,dx = -\int_{\partial\Omega_\varepsilon(t)}pV_\varepsilon^N\,d\mathcal{H}^2.
  \end{align}
\end{lemma}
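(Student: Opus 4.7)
The plan is to combine the Reynolds transport theorem for the time-dependent domain $\Omega_\varepsilon(t)$ with the Euler momentum equation and the impermeability boundary condition. The crucial cancellation between a convective boundary flux and a kinetic-energy boundary flux is what produces the clean identity whose only surviving boundary term involves the pressure.

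First I would apply the Reynolds transport theorem (already mentioned in the paper, e.g. in Remark~\ref{R:Incompressibility}) to $f=|u|^2/2$, obtaining
\begin{align*}
  \frac{d}{dt}\int_{\Omega_\varepsilon(t)}\frac{|u|^2}{2}\,dx
  = \int_{\Omega_\varepsilon(t)} u\cdot\partial_tu\,dx
  + \int_{\partial\Omega_\varepsilon(t)}\frac{|u|^2}{2}V_\varepsilon^N\,d\mathcal{H}^2.
\end{align*}
Next I would use the momentum equation \eqref{E:Eu_Momentum} to replace $\partial_tu$ by $-(u\cdot\nabla)u-\nabla p$, and then rewrite the resulting integrands in divergence form by using the incompressibility \eqref{E:Eu_Continuity}:
\begin{align*}
  u\cdot[(u\cdot\nabla)u] &= (u\cdot\nabla)\frac{|u|^2}{2} = \mathrm{div}\!\left(\frac{|u|^2}{2}u\right), \\
  u\cdot\nabla p &= \mathrm{div}(pu).
\end{align*}

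Applying the divergence theorem to both divergences over $\Omega_\varepsilon(t)$ converts them into boundary integrals over $\partial\Omega_\varepsilon(t)$, each weighted by $u\cdot\nu_\varepsilon$. Here I would invoke the boundary condition \eqref{E:Eu_Boundary}, $u\cdot\nu_\varepsilon=V_\varepsilon^N$, to replace $u\cdot\nu_\varepsilon$ by $V_\varepsilon^N$. The kinetic-energy boundary term produced by the divergence theorem then exactly cancels the kinetic-energy boundary term arising from the Reynolds formula, and what remains is precisely $-\int_{\partial\Omega_\varepsilon(t)}pV_\varepsilon^N\,d\mathcal{H}^2$, which is \eqref{E:Energy_Eu_Domain}.

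The calculation is entirely standard, and I do not anticipate a substantive obstacle; the only point requiring care is sign bookkeeping and ensuring that the boundary flux from Reynolds' formula (which uses $V_\varepsilon^N$) is matched against the flux from the divergence theorem (which uses $u\cdot\nu_\varepsilon$) via exactly the boundary condition \eqref{E:Eu_Boundary}. This is what makes the impermeability assumption essential: without it, an additional uncanceled kinetic flux of the form $\tfrac{1}{2}|u|^2(u\cdot\nu_\varepsilon-V_\varepsilon^N)$ would persist.
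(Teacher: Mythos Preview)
Your proposal is correct and follows essentially the same approach as the paper: Reynolds transport theorem, substitution of the momentum equation, conversion to boundary integrals via the divergence theorem together with \eqref{E:Eu_Continuity}, and use of \eqref{E:Eu_Boundary} to cancel the kinetic-energy fluxes. The only cosmetic difference is that you write $u\cdot[(u\cdot\nabla)u]=\mathrm{div}\bigl(\tfrac{|u|^2}{2}u\bigr)$ directly, whereas the paper obtains \eqref{E:Int_Inartia_Domain} by integrating $u\cdot(u\cdot\nabla)u$ by parts and moving the duplicate term to the left; the two computations are equivalent.
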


The identity \eqref{E:Energy_Eu_Domain} means that the rate of change of the kinetic energy of the incompressible perfect fluid in a moving domain is equal to the rate of work done by the pressure caused by the motion of the boundary.

\begin{proof}
  By the Reynolds transport theorem (see~\cite{G81bk}) and \eqref{E:Eu_Momentum},
  \begin{align} \label{Pf1:Energy_Eu_Domain}
    \frac{d}{dt}\int_{\Omega_\varepsilon(t)}\frac{|u|^2}{2}\,dx &= \int_{\Omega_\varepsilon(t)}u\cdot\partial_tu\,dx+\int_{\partial\Omega_\varepsilon(t)}\frac{|u|^2}{2}V_\varepsilon^N\,d\mathcal{H}^2 \\
    &= \int_{\Omega_\varepsilon(t)}u\cdot\{-(u\cdot\nabla)u-\nabla p\}\,dx+\int_{\partial\Omega_\varepsilon(t)}\frac{|u|^2}{2}V_\varepsilon^N\,d\mathcal{H}^2. \notag
  \end{align}
  By integration by parts and the equations \eqref{E:Eu_Continuity} and \eqref{E:Eu_Boundary} we have
  \begin{multline*}
    \int_{\Omega_\varepsilon(t)}u\cdot(u\cdot\nabla)u\,dx \\
    \begin{aligned}
      &= \int_{\partial\Omega_\varepsilon(t)}|u|^2(u\cdot\nu_\varepsilon)\,d\mathcal{H}^2-\int_{\Omega_\varepsilon(t)}\{u\cdot(u\cdot\nabla)u+|u|^2\mathrm{div}\,u\}\,dx \\
      &= \int_{\partial\Omega_\varepsilon(t)}|u|^2V_\varepsilon^N\,d\mathcal{H}^2-\int_{\Omega_\varepsilon(t)}u\cdot(u\cdot\nabla)u\,dx.
    \end{aligned}
  \end{multline*}
  Therefore,
  \begin{align} \label{E:Int_Inartia_Domain}
    \int_{\Omega_\varepsilon(t)}u\cdot(u\cdot\nabla)u\,dx = \int_{\partial\Omega_\varepsilon(t)}\frac{|u|^2}{2}V_\varepsilon^N\,d\mathcal{H}^2.
  \end{align}
  On the other hand, by integration by parts
  \begin{align*}
    \int_{\Omega_\varepsilon(t)}u\cdot\nabla p\,dx = \int_{\partial\Omega_\varepsilon(t)}(u\cdot\nu_\varepsilon)p\,d\mathcal{H}^2-\int_{\Omega_\varepsilon(t)}(\mathrm{div}\,u)p\,dx
  \end{align*}
  and we apply \eqref{E:Eu_Continuity} and \eqref{E:Eu_Boundary} to the right-hand side to get
  \begin{align} \label{E:Int_Pressure_Domain}
    \int_{\Omega_\varepsilon(t)}u\cdot\nabla p\,dx = \int_{\partial\Omega_\varepsilon(t)}pV_\varepsilon^N\,d\mathcal{H}^2.
  \end{align}
  Substituting \eqref{E:Int_Inartia_Domain} and \eqref{E:Int_Pressure_Domain} for \eqref{Pf1:Energy_Eu_Domain} we obtain the energy identity \eqref{E:Energy_Eu_Domain}.
\end{proof}

\begin{lemma} \label{L:Energy_Eu_Surface}
  Let $v$, $q$, and $q^1$ satisfy the limit equations \eqref{E:Eu_Momentum_Limit} and \eqref{E:Eu_Continuity_Limit} of the Euler equations.
  Suppose that the normal component of $v$ is equal to the outward normal velocity of $\Gamma(t)$, i.e. $v\cdot\nu=V_\Gamma^N$.
  Then we have
  \begin{align} \label{E:Energy_Eu_Surface}
    \frac{d}{dt}\int_{\Gamma(t)}\frac{|v|^2}{2}\,d\mathcal{H}^2 = \int_{\Gamma(t)}(qH-q^1)V_\Gamma^N\,d\mathcal{H}^2.
  \end{align}
\end{lemma}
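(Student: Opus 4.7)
The plan is to mirror the strategy used in the proof of Lemma~\ref{L:Energy_Eu_Domain}, but with the Reynolds transport theorem replaced by the Leibniz formula for evolving surfaces (\cite[Lemma~2.2]{DE07}, which was already invoked in Remark~\ref{R:Incompressibility}) and with the Euclidean divergence theorem replaced by its surface analogue on the closed surface $\Gamma(t)$. Since the normal component $v\cdot\nu=V_\Gamma^N$ coincides with the outward normal velocity of $\Gamma(t)$, the velocity field $v$ is an admissible transport velocity for the Leibniz formula, which yields
\begin{align*}
  \frac{d}{dt}\int_{\Gamma(t)}\frac{|v|^2}{2}\,d\mathcal{H}^2
  = \int_{\Gamma(t)}\Bigl(v\cdot\partial^\bullet_vv+\frac{|v|^2}{2}\,\mathrm{div}_\Gamma v\Bigr)\,d\mathcal{H}^2
  = \int_{\Gamma(t)}v\cdot\partial^\bullet_vv\,d\mathcal{H}^2,
\end{align*}
where the surface-incompressibility \eqref{E:Eu_Continuity_Limit} kills the second term.

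Next I would substitute the momentum equation \eqref{E:Eu_Momentum_Limit} in the form $\partial^\bullet_vv=-\nabla_\Gamma q-q^1\nu$ and use $v\cdot\nu=V_\Gamma^N$ to rewrite the integrand as
\begin{align*}
  v\cdot\partial^\bullet_vv = -v\cdot\nabla_\Gamma q - q^1 V_\Gamma^N.
\end{align*}
The pressure-gradient term is then recast using the Leibniz rule for the surface divergence, $\mathrm{div}_\Gamma(qv)=\nabla_\Gamma q\cdot v+q\,\mathrm{div}_\Gamma v$, and the surface-incompressibility again kills the last term, leaving $v\cdot\nabla_\Gamma q=\mathrm{div}_\Gamma(qv)$.

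The key step is the integration by parts on the closed surface $\Gamma(t)$ for the non-tangential vector field $qv$. Writing $qv=q(v-(v\cdot\nu)\nu)+qV_\Gamma^N\nu$ and using $\nabla_\Gamma(qV_\Gamma^N)\cdot\nu=0$ together with $\mathrm{div}_\Gamma\nu=-H$, one obtains
\begin{align*}
  \mathrm{div}_\Gamma(qv) = \mathrm{div}_\Gamma\bigl(q(v-V_\Gamma^N\nu)\bigr) - qHV_\Gamma^N.
\end{align*}
Integrating over the boundaryless $\Gamma(t)$, the surface divergence of the tangential field $q(v-V_\Gamma^N\nu)$ integrates to zero (this is the divergence theorem on a closed surface), so
\begin{align*}
  \int_{\Gamma(t)}v\cdot\nabla_\Gamma q\,d\mathcal{H}^2 = -\int_{\Gamma(t)}qHV_\Gamma^N\,d\mathcal{H}^2.
\end{align*}
Combining the above identities gives \eqref{E:Energy_Eu_Surface}.

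The only mildly delicate step is this last surface integration by parts, since $v$ is not tangential and a careful bookkeeping of signs (recalling $H=-\mathrm{div}_\Gamma\nu$) is needed to arrive at the curvature contribution $qHV_\Gamma^N$; everything else is a direct transcription of the bulk argument in Lemma~\ref{L:Energy_Eu_Domain}, with the Reynolds transport theorem replaced by the surface Leibniz formula.
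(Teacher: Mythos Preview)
Your proof is correct and follows essentially the same route as the paper: apply the Leibniz formula with transport velocity $v$, kill the $\mathrm{div}_\Gamma v$ term by \eqref{E:Eu_Continuity_Limit}, substitute \eqref{E:Eu_Momentum_Limit}, and handle $v\cdot\nabla_\Gamma q$ by splitting $qv$ into its tangential part (whose surface divergence integrates to zero on the closed surface) and the normal part $qV_\Gamma^N\nu$ (which produces the curvature term $-qHV_\Gamma^N$ via $\mathrm{div}_\Gamma\nu=-H$). The only cosmetic difference is that the paper names the tangential part $v^T$ from the outset whereas you write it as $v-V_\Gamma^N\nu$.
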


The right-hand side of \eqref{E:Energy_Eu_Surface} represents the rate of work done by the moving surface to the fluid.
Note that it contains the scalar function $q^1$, which corresponds to the normal derivative of the surface pressure.

\begin{proof}
  By the assumption we can write $v=V_\Gamma^N\nu+v^T$ with a tangential velocity field $v^T$ on $\Gamma(t)$.
  We apply the Leibniz formula (see~\cite[Lemma 2.2]{DE07}) with $v=V_\Gamma^N\nu+v^T$ to the integral of $|v|^2/2$ over $\Gamma(t)$.
  (Note that the tangential velocity $v^T$ does not affect the change of the shape of $\Gamma(t)$.)
  Then we have
  \begin{align*}
    \frac{d}{dt}\int_{\Gamma(t)}\frac{|v|^2}{2}\,d\mathcal{H}^2 &= \int_{\Gamma(t)}\left\{\partial_v^\bullet\left(\frac{|v|^2}{2}\right)+\frac{|v|^2}{2}\,\mathrm{div}_\Gamma v\right\}\,d\mathcal{H}^2 \\
    &= \int_{\Gamma(t)}v\cdot\partial^\bullet_vv\,d\mathcal{H}^2+\int_{\Gamma(t)}\frac{|v|^2}{2}\,\mathrm{div}_\Gamma v\,d\mathcal{H}^2.
  \end{align*}
  To the last line we apply the equations \eqref{E:Eu_Momentum_Limit} and \eqref{E:Eu_Continuity_Limit}.
  Then
  \begin{align} \label{Pf1:Energy_Eu_Surface}
    \frac{d}{dt}\int_{\Gamma(t)}\frac{|v|^2}{2}\,d\mathcal{H}^2 = -\int_{\Gamma(t)}v\cdot(\nabla_\Gamma q+q^1\nu)\,d\mathcal{H}^2.
  \end{align}
  For the first term on the right-hand side,
  \begin{align*}
    v\cdot\nabla_\Gamma q = \mathrm{div}_\Gamma(qv)+q\,\mathrm{div}_\Gamma v = -qHV_\Gamma^N+\mathrm{div}_\Gamma(qv^T)
  \end{align*}
  by $v=V_\Gamma^N\nu+v^T$, $\nabla_\Gamma(qV_\Gamma^N)\cdot\nu=0$, $\mathrm{div}_\Gamma\nu=-H$, and \eqref{E:Eu_Continuity_Limit}.
  Moreover, the integral of the surface divergence of the tangential vector field $qv^T$ over $\Gamma(t)$ vanishes by Stokes' theorem since $\Gamma(t)$ is closed.
  Hence we have
  \begin{align} \label{E:Int_Pressure_Tan}
    \int_{\Gamma(t)}v\cdot\nabla_\Gamma q\,d\mathcal{H}^2 = -\int_{\Gamma(t)}qHV_\Gamma^N\,d\mathcal{H}^2.
  \end{align}
  For the second term we have
  \begin{align} \label{E:Int_Pressure_Norm}
    \int_{\Gamma(t)}v\cdot(q^1\nu)\,d\mathcal{H}^2 = \int_{\Gamma(t)}q^1V_\Gamma^N\,d\mathcal{H}^2
  \end{align}
  by $v\cdot\nu=V_\Gamma^N$.
  The energy identity \eqref{E:Energy_Eu_Surface} follows from \eqref{Pf1:Energy_Eu_Surface}, \eqref{E:Int_Pressure_Tan}, and \eqref{E:Int_Pressure_Norm}.
\end{proof}

Let us show that the energy identity \eqref{E:Energy_Eu_Surface} on the moving surface can be derived as a thin width limit of that in the moving thin domain \eqref{E:Energy_Eu_Domain}.
As in Section~\ref{S:Limit_Eu} we expand the velocity $u$ and the pressure $p$ in powers of the signed distance $d$ as \eqref{E:Eu_U_Expand} and \eqref{E:P_Expand} and determine the zeroth order term in $\varepsilon$ of the energy identity \eqref{E:Energy_Eu_Domain}.

\begin{theorem} \label{T:Energy_Eu_Limit}
  Let $u$ and $p$ satisfy the energy identity \eqref{E:Energy_Eu_Domain}.
  Then the zeroth order term $v$ in the expansion \eqref{E:Eu_U_Expand} and the zeroth and first order terms $q$ and $q^1$ in the expansion \eqref{E:P_Expand} satisfy the energy identity \eqref{E:Energy_Eu_Surface}.
\end{theorem}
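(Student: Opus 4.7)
The plan is to apply the co-area formulas \eqref{E:Co-area_NBD} and \eqref{E:Co-area_BDY} to both sides of \eqref{E:Energy_Eu_Domain}, insert the expansions \eqref{E:Eu_U_Expand} and \eqref{E:P_Expand}, and exploit the explicit polynomial form \eqref{E:Jacobian} of the Jacobian $J(y,t,\rho)=1-\rho H(y,t)+\rho^2 K(y,t)$ together with the opposite signs of $\nu_\varepsilon$ and $V_\varepsilon^N$ on the two components of $\partial\Omega_\varepsilon(t)$ given in \eqref{E:N_Vect_Dom}--\eqref{E:N_Vel_Dom}. One then identifies the leading $O(\varepsilon)$ coefficients on both sides and divides by $2\varepsilon$.

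For the left-hand side, I will use \eqref{E:Co-area_NBD} to write
\begin{align*}
  \int_{\Omega_\varepsilon(t)}\frac{|u(x,t)|^2}{2}\,dx
  = \int_{\Gamma(t)}\int_{-\varepsilon}^{\varepsilon}\frac{|u(y+\rho\nu(y,t),t)|^2}{2}\,J(y,t,\rho)\,d\rho\,d\mathcal{H}^2(y).
\end{align*}
Substituting $u(y+\rho\nu,t)=v(y,t)+\rho v^1(y,t)+R(\rho^2)$ and $J=1-\rho H+\rho^2 K$, the integrand becomes $\tfrac12|v|^2+\rho(\,\text{stuff}\,)+R(\rho^2)$. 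Since the $\rho$-integral is over the symmetric interval $[-\varepsilon,\varepsilon]$, every odd power of $\rho$ drops out, leaving
\begin{align*}
  \int_{\Omega_\varepsilon(t)}\frac{|u|^2}{2}\,dx = 2\varepsilon\int_{\Gamma(t)}\frac{|v|^2}{2}\,d\mathcal{H}^2+O(\varepsilon^3),
\end{align*}
and differentiating in $t$ yields the $O(\varepsilon)$ contribution $2\varepsilon\,\tfrac{d}{dt}\int_{\Gamma(t)}\tfrac{|v|^2}{2}\,d\mathcal{H}^2$.

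For the right-hand side, I will use \eqref{E:Co-area_BDY} together with $V_\varepsilon^N(y\pm\varepsilon\nu,t)=\pm V_\Gamma^N(y,t)$ to combine the two surface integrals into
\begin{align*}
  \int_{\partial\Omega_\varepsilon(t)}pV_\varepsilon^N\,d\mathcal{H}^2
  = \int_{\Gamma(t)}V_\Gamma^N\bigl[p(y+\varepsilon\nu,t)J(y,t,\varepsilon)-p(y-\varepsilon\nu,t)J(y,t,-\varepsilon)\bigr]\,d\mathcal{H}^2.
\end{align*}
Expanding $p(y\pm\varepsilon\nu,t)=q\pm\varepsilon q^1+R(\varepsilon^2)$ and the two Jacobians, the $O(1)$ pieces cancel (the even-in-$\varepsilon$ part of the bracket vanishes by construction), and the $O(\varepsilon)$ piece equals $2\varepsilon(q^1-qH)$. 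Hence
\begin{align*}
  -\int_{\partial\Omega_\varepsilon(t)}pV_\varepsilon^N\,d\mathcal{H}^2 = 2\varepsilon\int_{\Gamma(t)}(qH-q^1)V_\Gamma^N\,d\mathcal{H}^2+O(\varepsilon^2).
\end{align*}
Equating the two $O(\varepsilon)$ coefficients (equivalently, dividing \eqref{E:Energy_Eu_Domain} by $2\varepsilon$ and letting $\varepsilon\to 0$) produces \eqref{E:Energy_Eu_Surface}.

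There is no serious obstacle here: the proof is a bookkeeping exercise built entirely on the explicit Taylor expansions already set up in the preliminaries. The one point that requires a little care is to ensure that one keeps enough terms in the expansions of $u$, $p$, and $J$ to capture the $O(\varepsilon)$ contributions but no more; the cancellation of odd powers of $\rho$ in the bulk integral and the cancellation of even powers of $\varepsilon$ in the boundary integral (due to the sign flip of $V_\varepsilon^N$) are what make the leading order identify cleanly with \eqref{E:Energy_Eu_Surface}. The smoothness assumptions on $\Gamma(t)$ and on the coefficients $v,v^1,q,q^1$ justify commuting $d/dt$ with the $\varepsilon$-expansion on the left-hand side.
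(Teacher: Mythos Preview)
Your proposal is correct and follows essentially the same route as the paper: apply the co-area formulas \eqref{E:Co-area_NBD}--\eqref{E:Co-area_BDY}, insert the expansions \eqref{E:Eu_U_Expand}--\eqref{E:P_Expand} and the explicit Jacobian \eqref{E:Jacobian}, and read off the $O(\varepsilon)$ coefficient. The only cosmetic differences are that you invoke the parity of the $\rho$-integrand to kill the odd terms in one stroke (the paper instead splits the integral as $I_1+I_2+I_3$ and treats each piece), and you justify the time differentiation of the remainder by smoothness, whereas the paper spells this out via the Reynolds transport theorem applied to $I_3$.
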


\begin{proof}
  From the expansion \eqref{E:Eu_U_Expand} we have
  \begin{align*}
    \frac{|u(x,t)|^2}{2} = \frac{|v(\pi(x,t),t)|^2}{2}+d(x,t)V(\pi(x,t),t)+R(d(x,t)^2)
  \end{align*}
  for $x\in\Omega_\varepsilon(t)$, where $V:=v\cdot v^1$.
  Using this expansion we write
  \begin{align*}
    \int_{\Omega_\varepsilon(t)}\frac{|u(x,t)|^2}{2}\,dx = I_1+I_2+I_3,
  \end{align*}
  where
  \begin{align*}
    I_1 &:= \int_{\Omega_\varepsilon(t)}\frac{|v(\pi(x,t),t)|^2}{2}\,dx, \\
    I_2 &:= \int_{\Omega_\varepsilon(t)}d(x,t)V(\pi(x,t),t)\,dx, \\
    I_3 &:= \int_{\Omega_\varepsilon(t)}R(d(x,t)^2)\,dx.
  \end{align*}
  To $I_1$ and $I_2$ we apply the change of variables formula \eqref{E:Co-area_NBD} to get
  \begin{align*}
    I_1 &= \int_{\Gamma(t)}\int_{-\varepsilon}^\varepsilon\frac{|v(y,t)|^2}{2}J(y,t,\rho)\,d\rho\,d\mathcal{H}^2(y) \\
    &= 2\varepsilon\int_{\Gamma(t)}\frac{|v(y,t)|^2}{2}\,d\mathcal{H}^2(y)+\varepsilon^2f_1(\varepsilon,t), \\
    I_2 &= \int_{\Gamma(t)}\int_{-\varepsilon}^\varepsilon\rho V(y,t)J(y,t,\rho)\,d\rho\,d\mathcal{H}^2(y) = \varepsilon^2f_2(\varepsilon,t),
  \end{align*}
  where $f_1$ and $f_2$ are polynomials in $\varepsilon$ with time-dependent coefficients.
  (Note that the Jacobian $J(y,t,\rho)$ given by \eqref{E:Jacobian} is a polynomial in $\rho$ and the principal curvatures of $\Gamma(t)$.)
  Hence
  \begin{align} \label{Pf1:energy_Eu_Limit}
    \frac{dI_1}{dt} = 2\varepsilon\frac{d}{dt}\int_{\Gamma(t)}\frac{|v(y,t)|^2}{2}\,d\mathcal{H}^2(y)+O(\varepsilon^2), \quad \frac{dI_2}{dt} = O(\varepsilon^2).
  \end{align}
  For $I_3$, using the Reynolds transport theorem and observing that the first order time derivative of $R(d(x,t)^2)$ is $R(d(x,t))$ we have
  \begin{align*}
    \frac{dI_3}{dt} = \int_{\Omega_\varepsilon(t)}R(d(x,t))\,dx+\int_{\partial\Omega_\varepsilon(t)}R(d(x,t)^2)V_\varepsilon^N(x,t)\,d\mathcal{H}^2(x).
  \end{align*}
  We apply the change of variables formula \eqref{E:Co-area_NBD} to the first term on the right-hand side of the above equality.
  Then by $R(d(x,t))=R(\rho)=O(\varepsilon)$ and $J(y,t,\rho)=O(1)$ for $d(x,t)=\rho\in(-\varepsilon,\varepsilon)$ with $x\in\Omega_\varepsilon(t)$ to get
  \begin{align*}
    \int_{\Omega_\varepsilon(t)}R(d(x,t))\,dx = \int_{\Gamma(t)}\int_{-\varepsilon}^\varepsilon R(\rho)J(y,t,\rho)\,d\rho\,d\mathcal{H}^2(y) = O(\varepsilon^2).
  \end{align*}
  Moreover, by $R(d(x,t)^2)=O(\varepsilon^2)$ for $x\in\partial\Omega_\varepsilon(t)$ and
  \begin{align*}
    |V_\varepsilon^N(x,t)| = |V_\Gamma^N(\pi(x,t),t)| = O(1), \quad x\in\partial\Omega_\varepsilon(t),
  \end{align*}
  which follows from \eqref{E:N_Vel_Dom} and the fact that $V_\Gamma^N$ is independent of $\varepsilon$, and the change of variables formula \eqref{E:Co-area_BDY} and $J(y,t,\pm\varepsilon)=O(1)$, we have
  \begin{align*}
    \int_{\partial\Omega_\varepsilon(t)}R(d(x,t)^2)V_\varepsilon^N(x,t)\,d\mathcal{H}^2(x) = \sum_{\rho=\pm\varepsilon}\int_{\Gamma(t)}O(\varepsilon^2)J(y,t,\rho)\,d\mathcal{H}^2(y) = O(\varepsilon^2).
  \end{align*}
  Hence $dI_3/dt=O(\varepsilon^2)$.
  From this estimate and \eqref{Pf1:energy_Eu_Limit} it follows that
  \begin{align} \label{E:Int_Kinetic_Limit}
    \frac{d}{dt}\int_{\Omega_\varepsilon(t)}\frac{|u(x,t)|^2}{2}\,dx &= \frac{d}{dt}(I_1+I_2+I_3) \\
    &= 2\varepsilon\frac{d}{dt}\int_{\Gamma(t)}\frac{|v(y,t)|^2}{2}\,d\mathcal{H}^2(y)+O(\varepsilon^2). \notag
  \end{align}
  Let us expand the right-hand side of the energy identity \eqref{E:Energy_Eu_Domain} in $\varepsilon$.
  By the expansion \eqref{E:P_Expand} of the pressure $p$, the relation \eqref{E:N_Vel_Dom}, and the formula \eqref{E:Co-area_BDY},
  \begin{align} \label{Pf2:Energy_Eu_Limit}
    \int_{\partial\Omega_\varepsilon(t)}p(x,t)V_\varepsilon^N(x,t)\,d\mathcal{H}^2(x) = J_1+\varepsilon J_2+O(\varepsilon^2),
  \end{align}
  where
  \begin{align*}
    J_1 &:= \int_{\Gamma(t)}q(y,t)V_\Gamma^N(y,t)\{J(y,t,\varepsilon)-J(y,t,-\varepsilon)\}\,d\mathcal{H}^2(y), \\
    J_2 &:= \int_{\Gamma(t)}q^1(y,t)V_\Gamma^N(y,t)\{J(y,t,\varepsilon)+J(y,t,-\varepsilon)\}\,d\mathcal{H}^2(y).
  \end{align*}
  From \eqref{E:Jacobian} we have
  \begin{align*}
    J(y,t,\varepsilon)-J(y,t,-\varepsilon) &= -2\varepsilon H(y,t)+O(\varepsilon^2), \\
    J(y,t,\varepsilon)+J(y,t,-\varepsilon) &= 2+O(\varepsilon^2).
  \end{align*}
  Hence
  \begin{align*}
    J_1 &= -2\varepsilon\int_{\Gamma(t)}q(y,t)H(y,t)V_\Gamma^N(y,t)\,d\mathcal{H}^2(y), \\
    J_2 &= 2\int_{\Gamma(t)}q^1(y,t)V_\Gamma^N(y,t)\,d\mathcal{H}^2(y)
  \end{align*}
  and applying them to the right-hand side of \eqref{Pf2:Energy_Eu_Limit} we get
  \begin{multline} \label{E:Int_Pressure_Limit}
    \int_{\partial\Omega_\varepsilon(t)}p(x,t)V_\varepsilon^N(x,t)\,d\mathcal{H}^2(x) \\
    = -2\varepsilon\int_{\Gamma(t)}\{q(y,t)H(y,t)-q^1(y,t)\}V_\Gamma^N(y,t)\,d\mathcal{H}^2(y)+O(\varepsilon^2).
  \end{multline}
  Finally, we substitute \eqref{E:Int_Kinetic_Limit} and \eqref{E:Int_Pressure_Limit} for \eqref{E:Energy_Eu_Domain} and divide both sides by $2\varepsilon$ to obtain
  \begin{multline*}
    \frac{d}{dt}\int_{\Gamma(t)}\frac{|v(y,t)|^2}{2}\,d\mathcal{H}^2(y) \\
    = \int_{\Gamma(t)}\{q(y,t)H(y,t)-q^1(y,t)\}V_\Gamma^N(y,t)\,d\mathcal{H}^2(y)+O(\varepsilon).
  \end{multline*}
  Since the left-hand side and the first term on the right-hand side are independent of $\varepsilon$, we conclude that the identity \eqref{E:Energy_Eu_Surface} should be satisfied.
\end{proof}

\subsection{Navier-Stokes equations}

\begin{lemma} \label{L:Energy_NS_Domain}
  Let $u$ and $p$ satisfy the Navier-Stokes equations \eqref{E:NS_Momentum}--\eqref{E:NS_B_Tan} in the moving thin domain $\Omega_\varepsilon(t)$.
  Then we have
  \begin{align} \label{E:Energy_NS_Domain}
    \frac{d}{dt}\int_{\Omega_\varepsilon(t)}\frac{|u|^2}{2}\,dx = -2\mu_0\int_{\Omega_\varepsilon(t)}|D(u)|^2\,dx+\int_{\partial\Omega_\varepsilon(t)}(\sigma\nu_\varepsilon\cdot\nu_\varepsilon)V_\varepsilon^N\,d\mathcal{H}^2.
  \end{align}
  Here $\sigma:=2\mu_0D(u)-pI_3$ denotes the Cauchy stress tensor.
\end{lemma}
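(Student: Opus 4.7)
The plan is to follow the template of the Euler energy identity (Lemma~\ref{L:Energy_Eu_Domain}) but with an extra viscous term, and to show that the boundary conditions \eqref{E:NS_B_Norm} and \eqref{E:NS_B_Tan} exactly convert the boundary contribution from $\mu_0\Delta u$ into the normal-normal component of the Cauchy stress tensor. First, by the Reynolds transport theorem and the momentum equation \eqref{E:NS_Momentum},
\begin{align*}
  \frac{d}{dt}\int_{\Omega_\varepsilon(t)}\frac{|u|^2}{2}\,dx = \int_{\Omega_\varepsilon(t)}u\cdot\{-(u\cdot\nabla)u-\nabla p+\mu_0\Delta u\}\,dx+\int_{\partial\Omega_\varepsilon(t)}\frac{|u|^2}{2}V_\varepsilon^N\,d\mathcal{H}^2.
\end{align*}
The inertia and pressure contributions are handled exactly as in the proof of Lemma~\ref{L:Energy_Eu_Domain}: using \eqref{E:NS_Continuity} and \eqref{E:NS_B_Norm}, we obtain the same identities \eqref{E:Int_Inartia_Domain} and \eqref{E:Int_Pressure_Domain} (with $\nu_\varepsilon$, $V_\varepsilon^N$), so that the inertia term cancels the boundary term $\int|u|^2V_\varepsilon^N/2$ and the pressure contributes $-\int_{\partial\Omega_\varepsilon(t)}pV_\varepsilon^N\,d\mathcal{H}^2$.

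The only new work is the viscous term. Since $\mathrm{div}\,u=0$, we have $\Delta u=2\,\mathrm{div}\,D(u)$, so integration by parts (componentwise) gives
\begin{align*}
  \mu_0\int_{\Omega_\varepsilon(t)}u\cdot\Delta u\,dx = 2\mu_0\int_{\partial\Omega_\varepsilon(t)}u\cdot D(u)\nu_\varepsilon\,d\mathcal{H}^2-2\mu_0\int_{\Omega_\varepsilon(t)}\nabla u:D(u)\,dx,
\end{align*}
and symmetry of $D(u)$ turns the last integrand into $|D(u)|^2$. For the boundary term, I decompose $u=(u\cdot\nu_\varepsilon)\nu_\varepsilon+[u]_{\text{tan}}$ and $D(u)\nu_\varepsilon=(D(u)\nu_\varepsilon\cdot\nu_\varepsilon)\nu_\varepsilon+[D(u)\nu_\varepsilon]_{\text{tan}}$; then $u\cdot D(u)\nu_\varepsilon=(u\cdot\nu_\varepsilon)(D(u)\nu_\varepsilon\cdot\nu_\varepsilon)+[u]_{\text{tan}}\cdot[D(u)\nu_\varepsilon]_{\text{tan}}$, and the two boundary conditions \eqref{E:NS_B_Norm} and \eqref{E:NS_B_Tan} reduce this to $V_\varepsilon^N(D(u)\nu_\varepsilon\cdot\nu_\varepsilon)$.

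Combining, the pressure and viscous boundary integrands assemble into
\begin{align*}
  (2\mu_0\,D(u)\nu_\varepsilon\cdot\nu_\varepsilon-p)V_\varepsilon^N = (\sigma\nu_\varepsilon\cdot\nu_\varepsilon)V_\varepsilon^N,
\end{align*}
using $\sigma=2\mu_0D(u)-pI_3$ and $\nu_\varepsilon\cdot\nu_\varepsilon=1$, which yields \eqref{E:Energy_NS_Domain}. The only subtle point is the boundary analysis of $u\cdot D(u)\nu_\varepsilon$: one must split both $u$ and $D(u)\nu_\varepsilon$ into normal and tangential parts and use \emph{both} Navier boundary conditions simultaneously, since neither alone eliminates the mixed term. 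Everything else is a routine adaptation of the Euler computation.
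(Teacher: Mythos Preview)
Your proof is correct and follows essentially the same route as the paper: Reynolds transport plus \eqref{E:NS_Momentum}, reuse of \eqref{E:Int_Inartia_Domain} and \eqref{E:Int_Pressure_Domain} for the inertia and pressure terms, the identity $\Delta u=2\,\mathrm{div}\,D(u)$, integration by parts, and the reduction of the boundary integrand $u\cdot D(u)\nu_\varepsilon$ to $V_\varepsilon^N(D(u)\nu_\varepsilon\cdot\nu_\varepsilon)$ via \eqref{E:NS_B_Norm}--\eqref{E:NS_B_Tan}. Your normal/tangential splitting of both $u$ and $D(u)\nu_\varepsilon$ is slightly more explicit than the paper's one-line statement, but the argument is the same.
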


The first term on the right-hand side of \eqref{E:Energy_NS_Domain} represents the energy dissipation by viscosity and the second term stands for the rate of work done by the normal component of the stress vector $\sigma\nu_\varepsilon$ on the moving boundary.

\begin{proof}
  By the Reynolds transport theorem (see~\cite{G81bk}) and the equation \eqref{E:NS_Momentum},
  \begin{multline} \label{Pf1:Energy_NS_Domain}
    \frac{d}{dt}\int_{\Omega_\varepsilon(t)}\frac{|u|^2}{2}\,dx = \int_{\Omega_\varepsilon(t)}u\cdot\partial_tu\,dx+\int_{\partial\Omega_\varepsilon(t)}\frac{|u|^2}{2}V_\varepsilon^N\,d\mathcal{H}^2 \\
    = \int_{\Omega_\varepsilon(t)}u\cdot\{-(u\cdot\nabla)u-\nabla p+\mu_0\Delta u\}\,dx+\int_{\partial\Omega_\varepsilon(t)}\frac{|u|^2}{2}V_\varepsilon^N\,d\mathcal{H}^2.
  \end{multline}
  We already computed the integrals of $u\cdot(u\cdot\nabla)u$ and $u\cdot\nabla p$ over $\Omega_\varepsilon(t)$ in the proof of Lemma~\ref{L:Energy_Eu_Domain}, see \eqref{E:Int_Inartia_Domain} and \eqref{E:Int_Pressure_Domain}.
  Let us calculate the integral of $u\cdot\Delta u$.
  Since $\Delta u=2\mathrm{div}\,D(u)$ by the divergence-free condition \eqref{E:NS_Continuity},
  \begin{align*}
    \int_{\Omega_\varepsilon(t)}u\cdot\Delta u\,dx &= 2\int_{\Omega_\varepsilon(t)}u\cdot\mathrm{div}\,D(u)\,dx \\
    &= 2\int_{\partial\Omega_\varepsilon(t)}u\cdot D(u)^T\nu_\varepsilon\,d\mathcal{H}^2-2\int_{\Omega_\varepsilon(t)}\nabla u:D(u)\,dx,
  \end{align*}
  where $F:G:=\mathrm{tr}[F^TG]$ for square matrices $F$ and $G$ of order three.
  In the last line we use the symmetry of the strain rate tensor $D(u)$ and the boundary conditions \eqref{E:NS_B_Norm} and \eqref{E:NS_B_Tan} to get
  \begin{align*}
    u\cdot D(u)^T\nu_\varepsilon = (u\cdot\nu_\varepsilon)(D(u)\nu_\varepsilon\cdot\nu_\varepsilon) = V_\varepsilon^N(D(u)\nu_\varepsilon\cdot\nu_\varepsilon)
  \end{align*}
  on $\partial\Omega_\varepsilon(t)$.
  Also, we easily observe that
  \begin{align*}
    \nabla u:D(u) = (\nabla u)^T:D(u) = |D(u)|^2 \left(=\sum_{i,j=1}^3[D(u)]_{ij}^2\right).
  \end{align*}
  Here $[D(u)]_{ij}$ is the $(i,j)$-entry of $D(u)$, i.e. $[D(u)]_{ij}=(\partial_iu_j+\partial_ju_i)/2$.
  Hence
  \begin{align} \label{E:Int_Viscosity_Domain}
    \int_{\Omega_\varepsilon(t)}u\cdot\Delta u\,dx = 2\int_{\partial\Omega_\varepsilon}(D(u)\nu_\varepsilon\cdot\nu_\varepsilon)V_\varepsilon^N\,d\mathcal{H}^2-2\int_{\Omega_\varepsilon(t)}|D(u)|^2\,dx.
  \end{align}
  Finally we substitute \eqref{E:Int_Inartia_Domain}, \eqref{E:Int_Pressure_Domain}, and \eqref{E:Int_Viscosity_Domain} for \eqref{Pf1:Energy_NS_Domain} to obtain \eqref{E:Energy_NS_Domain}.
\end{proof}

\begin{lemma} \label{L:Energy_NS_Surface}
  Let $v$, $q$, and $q^1$ satisfy the limit equations \eqref{E:NS_Momentum_Limit} and \eqref{E:NS_Continuity_Limit} of the Navier-Stokes equations.
  Suppose that the normal component of $v$ is equal to the outward normal velocity of $\Gamma(t)$, i.e. $v\cdot\nu=V_\Gamma^N$.
  Then we have
  \begin{multline} \label{E:Energy_NS_Surface}
    \frac{d}{dt}\int_{\Gamma(t)}\frac{|v|^2}{2}\,d\mathcal{H}^2 = -2\mu_0\int_{\Gamma(t)}|P_\Gamma D^{tan}(v)P_\Gamma|^2\,d\mathcal{H}^2 \\
    +\int_{\Gamma(t)}(qH-q^1)V_\Gamma^N\,d\mathcal{H}^2.
  \end{multline}
\end{lemma}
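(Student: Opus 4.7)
The strategy is to adapt the argument in Lemma~\ref{L:Energy_Eu_Surface} and handle the additional viscous term by surface integration by parts on the closed surface $\Gamma(t)$. By the Leibniz formula applied with the velocity field $v$ (which by assumption satisfies $v\cdot\nu=V_\Gamma^N$ and thus transports $\Gamma(t)$) together with the surface incompressibility \eqref{E:NS_Continuity_Limit}, one obtains
\begin{align*}
  \frac{d}{dt}\int_{\Gamma(t)}\frac{|v|^2}{2}\,d\mathcal{H}^2 = \int_{\Gamma(t)}v\cdot\partial^\bullet_vv\,d\mathcal{H}^2,
\end{align*}
exactly as in the proof of Lemma~\ref{L:Energy_Eu_Surface}. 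Substituting the momentum equation \eqref{E:NS_Momentum_Limit} splits the right-hand side into a pressure contribution and a viscous contribution. The pressure contribution is handled by the same computation used in Lemma~\ref{L:Energy_Eu_Surface} (using $v\cdot\nu=V_\Gamma^N$, $\mathrm{div}_\Gamma\nu=-H$, \eqref{E:NS_Continuity_Limit}, and Stokes' theorem on the closed $\Gamma(t)$ applied to the tangential field $qv^T$) and produces the term $\int_{\Gamma(t)}(qH-q^1)V_\Gamma^N\,d\mathcal{H}^2$ on the right-hand side of \eqref{E:Energy_NS_Surface}.

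The genuinely new step is to show that the viscous contribution equals the claimed dissipation:
\begin{align*}
  2\mu_0\int_{\Gamma(t)}v\cdot\mathrm{div}_\Gamma(P_\Gamma D^{tan}(v)P_\Gamma)\,d\mathcal{H}^2 = -2\mu_0\int_{\Gamma(t)}|P_\Gamma D^{tan}(v)P_\Gamma|^2\,d\mathcal{H}^2.
\end{align*}
Writing $M:=P_\Gamma D^{tan}(v)P_\Gamma$ and applying the scalar surface integration-by-parts formula componentwise (to $\int_{\Gamma(t)}v_j\,\mathrm{div}_\Gamma(Me_j)\,d\mathcal{H}^2$, then summing over $j$) gives
\begin{align*}
  \int_{\Gamma(t)}v\cdot\mathrm{div}_\Gamma M\,d\mathcal{H}^2 = -\int_{\Gamma(t)}\nabla_\Gamma v : M\,d\mathcal{H}^2 - \int_{\Gamma(t)}H(v\cdot M^T\nu)\,d\mathcal{H}^2.
\end{align*}
The mean-curvature term vanishes because $M^T=M$ and $M\nu=P_\Gamma D^{tan}(v)P_\Gamma\nu=0$ by $P_\Gamma\nu=0$. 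Since $M$ is symmetric, one may replace $\nabla_\Gamma v$ by its symmetric part in the contraction, so $\nabla_\Gamma v:M=D^{tan}(v):M$; then $P_\Gamma^2=P_\Gamma$ and the cyclic property of the trace give $D^{tan}(v):(P_\Gamma D^{tan}(v)P_\Gamma)=|P_\Gamma D^{tan}(v)P_\Gamma|^2$, which is the required dissipation identity. Assembling the three pieces produces \eqref{E:Energy_NS_Surface}.

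The main delicate point is the surface integration by parts step: on a closed surface this formula generates an extra mean-curvature term which must be carried along and then verified to vanish for the specific matrix $M=P_\Gamma D^{tan}(v)P_\Gamma$. This cancellation is a direct consequence of $P_\Gamma\nu=0$; the rest is purely algebraic (symmetry of $D^{tan}(v)$ and $P_\Gamma$ and $P_\Gamma^2=P_\Gamma$), and no further differential-geometric input — in particular no appeal to Lemma~\ref{L:Compare_Viscous} or the Bochner Laplacian — is needed for the energy identity itself.
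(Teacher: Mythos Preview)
Your proof is correct and follows essentially the same approach as the paper's: Leibniz formula plus \eqref{E:NS_Continuity_Limit} for the kinetic term, the computations \eqref{E:Int_Pressure_Tan}--\eqref{E:Int_Pressure_Norm} for the pressure terms, and surface integration by parts plus the algebraic identity $D^{tan}(v):P_\Gamma D^{tan}(v)P_\Gamma=|P_\Gamma D^{tan}(v)P_\Gamma|^2$ for the viscous term. The only cosmetic difference is that the paper writes $v\cdot\mathrm{div}_\Gamma M=\mathrm{div}_\Gamma(Mv)-\nabla_\Gamma v:M$ and observes that $Mv$ is tangential (so Stokes' theorem applies with no curvature correction), whereas you integrate by parts componentwise, produce the mean-curvature term $-\int_{\Gamma(t)}H(v\cdot M^T\nu)\,d\mathcal{H}^2$, and then kill it via $M\nu=0$; these are the same cancellation viewed from two sides.
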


The first and second term on the right-hand side of \eqref{E:Energy_NS_Surface} correspond to the energy dissipation of the surface fluid by viscosity and the rate of work done by the moving surface, respectively.

\begin{proof}
  As in the proof of Lemma~\ref{L:Energy_Eu_Surface} we use the Leibniz formula~\cite[Lemma~2.2]{DE07} with velocity field $v$ and the equations \eqref{E:NS_Momentum_Limit} and \eqref{E:NS_Continuity_Limit}:
  \begin{align} \label{Pf1:Energy_NS_Surface}
    \frac{d}{dt}\int_{\Gamma(t)}\frac{|v|^2}{2}\,d\mathcal{H}^2 &= \int_{\Gamma(t)}v\cdot\partial^\bullet_vv\,d\mathcal{H}^2+\int_{\Gamma(t)}\frac{|v|^2}{2}\,\mathrm{div}_\Gamma v\,d\mathcal{H}^2 \\
    &= \int_{\Gamma(t)}v\cdot\{-\nabla_\Gamma q-q^1\nu+2\mu_0\mathrm{div}_\Gamma(P_\Gamma D^{tan}(v)P_\Gamma)\}\,d\mathcal{H}^2. \notag
  \end{align}
  The first two terms in the last line were calculated in the proof of Lemma~\ref{L:Energy_Eu_Surface}, see \eqref{E:Int_Pressure_Tan} and \eqref{E:Int_Pressure_Norm}.
  For the viscous term,
  \begin{align*}
    v\cdot\mathrm{div}_\Gamma(P_\Gamma D^{tan}(v)P_\Gamma) = \mathrm{div}_\Gamma(P_\Gamma D^{tan}(v)P_\Gamma v)-\nabla_\Gamma v:P_\Gamma D^{tan}(v)P_\Gamma.
  \end{align*}
  The integral of the first term on the right-hand side over $\Gamma(t)$ vanishes by Stokes' theorem since $\Gamma(t)$ is closed and $P_\Gamma D^{tan}(v)P_\Gamma v$ is a tangential vector field on $\Gamma(t)$.
  Also, since the matrix $P_\Gamma D^{tan}(v)P_\Gamma$ is symmetric,
  \begin{align*}
    \nabla_\Gamma v:P_\Gamma D^{tan}(v)P_\Gamma = (\nabla_\Gamma v)^T:P_\Gamma D^{tan}(v)P_\Gamma = D^{tan}(v):P_\Gamma D^{tan}(v)P_\Gamma.
  \end{align*}
  Moreover, by the formulas $P_\Gamma^2=P_\Gamma^T=P_\Gamma$ and $E:FG=F^TE:G=EG^T:F$ for square matrices $E$, $F$, and $G$ of order three we obtain
  \begin{align*}
    \nabla_\Gamma v:P_\Gamma D^{tan}(v)P_\Gamma = D^{tan}(v):P_\Gamma D^{tan}(v)P_\Gamma = |P_\Gamma D^{tan}(v)P_\Gamma|^2.
  \end{align*}
  Hence the integral of the inner product of $v$ and $\mathrm{div}_\Gamma(P_\Gamma D^{tan}(v)P_\Gamma)$ is
  \begin{align} \label{E:Int_Viscous_Surface}
    \int_{\Gamma(t)}v\cdot\mathrm{div}_\Gamma(P_\Gamma D^{tan}(v)P_\Gamma)\,d\mathcal{H}^2 = -\int_{\Gamma(t)}|P_\Gamma D^{tan}(v)P_\Gamma|^2\,d\mathcal{H}^2.
  \end{align}
  Applying \eqref{E:Int_Pressure_Tan}, \eqref{E:Int_Pressure_Norm}, and \eqref{E:Int_Viscous_Surface} to \eqref{Pf1:Energy_NS_Surface} we obtain \eqref{E:Energy_NS_Surface}.
\end{proof}

As in the case of the Euler equations, the energy identity \eqref{E:Energy_NS_Surface} on the moving surface can be derived as a thin width limit of that in the moving thin domain \eqref{E:Energy_NS_Domain}.
Let us expand $u$ and $p$ in powers of $d$ as \eqref{E:NS_U_Expand} and \eqref{E:P_Expand} and determine the zeroth order term in $\varepsilon$ of the energy identity \eqref{E:Energy_NS_Domain}.

\begin{theorem} \label{T:Energy_NS_Limit}
  Let $u$ and $p$ satisfy the energy identity \eqref{E:Energy_NS_Domain}.
  Suppose that the velocity field $u$ satisfies the boundary conditions \eqref{E:NS_B_Norm} and \eqref{E:NS_B_Tan}.
  Then the zeroth order term $v$ in the expansion \eqref{E:NS_U_Expand} and the zeroth and first terms $q$ and $q^1$ in the expansion \eqref{E:P_Expand} satisfy the energy identity \eqref{E:Energy_NS_Surface}.
\end{theorem}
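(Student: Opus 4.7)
The strategy is to expand every term on the two sides of the bulk energy identity \eqref{E:Energy_NS_Domain} in powers of $\varepsilon$ (using the expansions \eqref{E:NS_U_Expand} and \eqref{E:P_Expand}, the signed-distance change of variables formulas \eqref{E:Co-area_NBD}--\eqref{E:Co-area_BDY}, and the Taylor expansion of the Jacobian \eqref{E:Jacobian}), and then to read off the coefficient of $\varepsilon$ on each side. Matching these coefficients will give the surface energy identity \eqref{E:Energy_NS_Surface}, since, by the same argument used to finish the proof of Theorem~\ref{T:Energy_Eu_Limit}, the leading coefficients are independent of $\varepsilon$.

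The kinetic energy term and the pressure part of the boundary integral are treated exactly as in the proof of Theorem~\ref{T:Energy_Eu_Limit}: one obtains
\begin{align*}
  \frac{d}{dt}\int_{\Omega_\varepsilon(t)}\frac{|u|^2}{2}\,dx &= 2\varepsilon\,\frac{d}{dt}\int_{\Gamma(t)}\frac{|v|^2}{2}\,d\mathcal{H}^2+O(\varepsilon^2), \\
  -\int_{\partial\Omega_\varepsilon(t)}p\,V_\varepsilon^N\,d\mathcal{H}^2 &= 2\varepsilon\int_{\Gamma(t)}(qH-q^1)V_\Gamma^N\,d\mathcal{H}^2+O(\varepsilon^2).
\end{align*}
Thus only the two genuinely viscous terms in \eqref{E:Energy_NS_Domain} are new.

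For the bulk dissipation, I would invoke the expansion $D(u)(x,t)=S(\pi,t)+d(x,t)S^1(\pi,t)+R(d^2)$ obtained in the proof of Theorem~\ref{T:NS_Limit}, together with the identification \eqref{E:NS_Strain0_Full} that $S=P_\Gamma D^{tan}(v)P_\Gamma$. Squaring gives $|D(u)|^2=|S|^2+2\,d\,(S:S^1)+R(d^2)$, and the change of variables formula \eqref{E:Co-area_NBD} together with $J(y,t,\rho)=1+O(\rho)$ yields
\begin{align*}
  -2\mu_0\int_{\Omega_\varepsilon(t)}|D(u)|^2\,dx = -2\mu_0\cdot 2\varepsilon\int_{\Gamma(t)}|P_\Gamma D^{tan}(v)P_\Gamma|^2\,d\mathcal{H}^2+O(\varepsilon^2),
\end{align*}
because the odd-in-$\rho$ contribution $2\rho\,(S:S^1)$ integrates to $O(\varepsilon^2)$. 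The viscous boundary contribution $2\mu_0\int_{\partial\Omega_\varepsilon(t)}(D(u)\nu_\varepsilon\cdot\nu_\varepsilon)V_\varepsilon^N\,d\mathcal{H}^2$ is the delicate point. Observing that $\nu_\varepsilon\otimes\nu_\varepsilon=\nu(\pi,t)\otimes\nu(\pi,t)$ on either component of $\partial\Omega_\varepsilon(t)$, we get on $\partial\Omega_\varepsilon(t)$
\begin{align*}
  D(u)\nu_\varepsilon\cdot\nu_\varepsilon = S(\pi,t)\nu(\pi,t)\cdot\nu(\pi,t)\pm\varepsilon\,S^1(\pi,t)\nu(\pi,t)\cdot\nu(\pi,t)+O(\varepsilon^2).
\end{align*}
Here $S\nu=P_\Gamma D^{tan}(v)P_\Gamma\nu=0$, and $S^1\nu=0$ by \eqref{E:NS_S1nu_Full}; both identities rely on the Navier boundary conditions \eqref{E:NS_B_Norm}--\eqref{E:NS_B_Tan}. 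Consequently the whole viscous boundary integral is $O(\varepsilon^2)$ after applying \eqref{E:Co-area_BDY} and $|V_\varepsilon^N|=O(1)$.

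Collecting the four expansions, substituting into \eqref{E:Energy_NS_Domain}, dividing by $2\varepsilon$, and equating the $\varepsilon$-independent terms on the two sides yields \eqref{E:Energy_NS_Surface}. The main obstacle is the verification that the viscous boundary work contributes only at order $\varepsilon^2$; this is precisely where both the perfect slip boundary condition and the structural identity $S=P_\Gamma D^{tan}(v)P_\Gamma$ derived in Theorem~\ref{T:NS_Limit} are used, and it is what makes the surface dissipation $|P_\Gamma D^{tan}(v)P_\Gamma|^2$ appear cleanly in the limit without spurious boundary terms.
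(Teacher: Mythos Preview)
Your proposal is correct and follows essentially the same approach as the paper: you reuse the kinetic-energy and pressure expansions from Theorem~\ref{T:Energy_Eu_Limit}, you invoke the strain-rate expansion $D(u)=S+dS^1+R(d^2)$ with $S=P_\Gamma D^{tan}(v)P_\Gamma$ and $S^1\nu=0$ from the proof of Theorem~\ref{T:NS_Limit} to handle the two viscous terms, and you finish by dividing through by $2\varepsilon$. Your treatment of the cross term $2d\,(S:S^1)$ in $|D(u)|^2$ via odd-in-$\rho$ integration is in fact slightly more explicit than the paper's, which simply absorbs the first-order remainder into the $O(\varepsilon^2)$ after the change of variables.
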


\begin{proof}
  The remaining part of the proof is to show that
  \begin{align} \label{E:Int_Viscous_Limit}
    \int_{\Omega_\varepsilon(t)}|D(u)(x,t)|^2\,dx = 2\varepsilon\int_{\Gamma(t)}|(P_\Gamma D^{tan}(v)P_\Gamma)(y,t)|^2\,d\mathcal{H}^2(y)+O(\varepsilon^2)
  \end{align}
  and
  \begin{align} \label{E:Int_Strain_Limit}
    \int_{\partial\Omega_\varepsilon(t)}[(D(u)\nu_\varepsilon\cdot\nu_\varepsilon)V_\varepsilon^N](x,t)\,d\mathcal{H}^2(x) = O(\varepsilon^2)
  \end{align}
  since we already computed other terms in the proof of Theorem~\ref{T:Energy_Eu_Limit}, see \eqref{E:Int_Kinetic_Limit} and \eqref{E:Int_Pressure_Limit}.
  By \eqref{E:NS_Strain_Exp} and \eqref{E:NS_Strain0_Full} in the proof of Theorem~\ref{T:NS_Limit} we have
  \begin{gather*}
    D(u)(x,t) = (P_\Gamma D^{tan}(v)P_\Gamma)(\pi(x,t),t)+d(x,t)S^1(\pi(x,t),t)+R(d(x,t)^2)
  \end{gather*}
  for $x\in \Omega_\varepsilon(t)$
  (Note that to get \eqref{E:NS_Strain0_Full} we only need the boundary conditions \eqref{E:NS_B_Norm} and \eqref{E:NS_B_Tan} for the Navier-Stokes equations.
  See the proof of Theorem~\ref{T:NS_Limit}.)
  Using this expansion and the change of variable formula \eqref{E:Co-area_NBD} we obtain \eqref{E:Int_Viscous_Limit} as
  \begin{multline*}
    \int_{\Omega_\varepsilon(t)}|D(u)(x,t)|^2\,dx \\
    \begin{aligned}
      &= \int_{\Omega_\varepsilon(t)}\{|(P_\Gamma D^{tan}(v)P_\Gamma)(\pi(x,t),t)|^2+R(d(x,t)^2)\}\,dx \\
      &= \int_{\Gamma(t)}\int_{-\varepsilon}^\varepsilon\{|(P_\Gamma D^{tan}(v)P_\Gamma)(y,t)|^2+R(\rho^2)\}J(y,t,\rho)\,d\rho\,d\mathcal{H}^2(y) \\
      &= 2\varepsilon\int_{\Gamma(t)}|(P_\Gamma D^{tan}(v)P_\Gamma)(y,t)|^2\,d\mathcal{H}^2(y)+O(\varepsilon^2).
    \end{aligned}
  \end{multline*}
  Let us show \eqref{E:Int_Strain_Limit}.
  By \eqref{E:N_Vect_Dom} we have
  \begin{align*}
    (D(u)\nu_\varepsilon)(x,t) &= \pm(P_\Gamma D^{tan}(v)P_\Gamma\nu)(\pi(x,t),t)+\varepsilon(S^1\nu)(\pi(x,t),t)+O(\varepsilon^2).
  \end{align*}
  for $x\in\partial\Omega_\varepsilon(t)$ according to $d(x,t)=\pm\varepsilon$ (double-sign corresponds).
  Moreover, the first two terms on the right-hand side vanishes since $P_\Gamma\nu=0$ and $S^1\nu=0$ on $\Gamma(t)$ by \eqref{E:NS_S1nu_Full}.
  (Note that, similarly to the proof of \eqref{E:NS_Strain0_Full}, only the boundary conditions \eqref{E:NS_B_Norm} and \eqref{E:NS_B_Tan} are necessary to show \eqref{E:NS_S1nu_Full}.
  See the proof of Theorem~\ref{T:NS_Limit}.)
  Hence $D(u)\nu_\varepsilon=O(\varepsilon^2)$ on $\partial\Omega_\varepsilon(t)$.
  Applying this estimate and
  \begin{align*}
    |\nu_\varepsilon(x,t)| = 1, \quad |V_\varepsilon^N(x,t)| = |V_\Gamma^N(\pi(x,t),t)| = O(1), \quad x\in\partial\Omega_\varepsilon(t),
  \end{align*}
  where the second relation follows from \eqref{E:N_Vel_Dom} and the fact that $V_\Gamma^N$ is independent of $\varepsilon$, to the left-hand side of \eqref{E:Int_Strain_Limit}, and then using the change of variables formula \eqref{E:Co-area_BDY} and $J(y,t,\pm\varepsilon)=O(1)$, we obtain \eqref{E:Int_Strain_Limit} as
  \begin{align*}
    \int_{\partial\Omega_\varepsilon(t)}[(D(u)\nu_\varepsilon\cdot\nu_\varepsilon)V_\varepsilon^N](x,t)\,d\mathcal{H}^2(x) = \sum_{\rho=\pm\varepsilon}\int_{\Gamma(t)}O(\varepsilon^2)J(y,t,\rho)\,d\mathcal{H}^2 = O(\varepsilon^2).
  \end{align*}
  Now we substitute \eqref{E:Int_Kinetic_Limit}, \eqref{E:Int_Pressure_Limit}, \eqref{E:Int_Viscous_Limit}, and \eqref{E:Int_Strain_Limit} for the energy identity \eqref{E:Energy_NS_Domain} and divide both sides by $2\varepsilon$ to obtain
  \begin{multline*}
    \frac{d}{dt}\int_{\Gamma(t)}\frac{|v(y,t)|^2}{2}\,d\mathcal{H}^2(y) = -2\mu_0\int_{\Gamma(t)}|(P_\Gamma D^{tan}(v)P_\Gamma)(y,t)|^2\,d\mathcal{H}^2(y) \\
    +\int_{\Gamma(t)}(qH-q^1)(y,t)V_\Gamma^N(y,t)\,d\mathcal{H}^2(y)+O(\varepsilon).
  \end{multline*}
  Since all terms except for $O(\varepsilon)$ are independent of $\varepsilon$, we conclude that the energy identity \eqref{E:Energy_NS_Surface} must be satisfied.
\end{proof}

\begin{remark} \label{R:Energy_NS_Limit}
  The assumption in Theorem~\ref{T:Energy_NS_Limit} that the boundary conditions \eqref{E:NS_B_Norm} and \eqref{E:NS_B_Tan} are satisfied is necessary to deal with integrals including the strain rate tensor $D(u)$.
  Note that, contrary to the case of the Navier-Stokes equations (Theorem~\ref{T:Energy_NS_Limit}), we do not need even the impermeable boundary condition \eqref{E:Eu_Boundary} to derive the thin width limit of the energy identity of the Euler equations in the moving thin domain, see Theorem~\ref{T:Energy_Eu_Limit}.
\end{remark}

\begin{appendices}

\section{Elementary calculations of various quantities on surfaces} \label{S:Appendix_A}
In this appendix we prove elementary facts on various quantities and differential operators on a surface given in Section~\ref{S:Preliminaries}.
Until the end of the proof of Lemma~\ref{L:Co-area} we fix and suppress $t\in[0,T]$.

\begin{proof}[Proof of Lemma~\ref{L:Weingarten}]
  Since $|\nu|^2=1$ on $\Gamma$, we have
  \begin{align*}
    0 = \nabla_\Gamma|\nu|^2 = 2(\nabla_\Gamma\nu)\nu = -2A\nu \quad\text{on}\quad \Gamma,
  \end{align*}
  which implies \eqref{E:W_Normal}.
  The formula \eqref{E:W_OrthProj} is an immediate consequence of \eqref{E:W_Normal}.
  Now let us prove \eqref{E:W_Hessian}.
  Let $\tilde{\nu}$ be an extension of $\nu$ to $N$.
  By \eqref{E:Nabla_Dist} and $\tilde{\nu}|_\Gamma=\nu$ we have
  \begin{align} \label{Pf1:Weingarten}
    \nabla^2d(x) = \nabla\pi(x)(\nabla\tilde{\nu})(\pi(x)), \quad x\in N.
  \end{align}
  Moreover, we differentiate both sides of \eqref{E:N_Coordinate} and apply \eqref{E:Nabla_Dist} to get
  \begin{align*}
    \nabla\pi(x) = P_\Gamma(\pi(x))-d(x)\nabla\pi(x)(\nabla\tilde{\nu})(\pi(x)), \quad x\in N.
  \end{align*}
  In particular, if $x=y\in\Gamma$ then $d(x)=0$, $\pi(x)=y$ and thus
  \begin{align*}
    \nabla\pi(y) = P_\Gamma(y), \quad y\in\Gamma.
  \end{align*}
  Applying this formula to \eqref{Pf1:Weingarten} with $x=y\in\Gamma$ we obtain \eqref{E:W_Hessian}.
\end{proof}

\begin{proof}[Proof of Lemma~\ref{L:Exchange_Derivative}]
  Let $f$ be a function on $\Gamma$ and $\tilde{f}$ its extension to $N$ satisfying $\tilde{f}|_\Gamma=f$.
  For $j=1,2,3$, by \eqref{E:Nabla_Dist} and the definition of the tangential derivative operators we have
  \begin{align*}
    \partial_j^{tan}f(y) = \sum_{l=1}^3\{\delta_{jl}-\partial_jd(y)\partial_ld(y)\}\partial_l\tilde{f}(y), \quad y\in\Gamma.
  \end{align*}
  From now on we suppress the argument $y$.
  By the above formula we have
  \begin{align*}
    \partial_i^{tan}\partial_j^{tan}f &= \sum_{k,l=1}^3\{\delta_{ik}-(\partial_id)(\partial_kd)\}\partial_k\Bigl[\{\delta_{jl}-(\partial_jd)(\partial_ld)\}\partial_l\tilde{f}\,\Bigr] \\
    &= \alpha_1+\alpha_2+\alpha_3
  \end{align*}
  for $i,j=1,2,3$, where
  \begin{align*}
    \alpha_1 &:= \sum_{k,l=1}^3\{\delta_{ik}-(\partial_id)(\partial_kd)\}\{\delta_{jl}-(\partial_jd)(\partial_ld)\}\partial_k\partial_l\tilde{f}, \\
    \alpha_2 &:= -\sum_{k,l=1}^3\{\delta_{ik}-(\partial_id)(\partial_kd)\}(\partial_k\partial_jd)(\partial_ld)\partial_l\tilde{f}, \\
    \alpha_3 &:= -\sum_{k,l=1}^3\{\delta_{ik}-(\partial_id)(\partial_kd)\}(\partial_jd)(\partial_k\partial_ld)\partial_l\tilde{f}.
  \end{align*}
  Similarly, we have $\partial_j^{tan}\partial_i^{tan}f=\beta_1+\beta_2+\beta_3$, where
  \begin{align*}
    \beta_1 &:= \sum_{k,l=1}^3\{\delta_{jl}-(\partial_jd)(\partial_ld)\}\{\delta_{ik}-(\partial_id)(\partial_kd)\}\partial_l\partial_k\tilde{f}, \\
    \beta_2 &:= -\sum_{k,l=1}^3\{\delta_{jl}-(\partial_jd)(\partial_ld)\}(\partial_l\partial_id)(\partial_kd)\partial_k\tilde{f}, \\
    \beta_3 &:= -\sum_{k,l=1}^3\{\delta_{jl}-(\partial_jd)(\partial_ld)\}(\partial_id)(\partial_l\partial_kd)\partial_k\tilde{f}.
  \end{align*}
  From $\partial_k\partial_l\tilde{f}=\partial_l\partial_k\tilde{f}$ it immediately follows that $\alpha_1=\beta_1$.
  Since $\partial_k\partial_jd=\partial_j\partial_kd$,
  \begin{align*}
    \alpha_2 &= -(\nabla d\cdot\nabla\tilde{f})\left\{\partial_i\partial_jd-(\partial_id)\sum_{k=1}^3(\partial_kd)(\partial_j\partial_kd)\right\} \\
    &= -(\nabla d\cdot\nabla\tilde{f})\left\{\partial_i\partial_jd-(\partial_id)\partial_j\left(\frac{|\nabla d|^2}{2}\right)\right\} \\
    &= -(\nabla d\cdot\nabla\tilde{f})\partial_i\partial_jd.
  \end{align*}
  Here the last equality follows from $|\nabla d|^2=1$ on $N$.
  By the same calculation we have $\beta_2=-(\nabla d\cdot\nabla\tilde{f})\partial_j\partial_id$.
  Hence $\alpha_2=\beta_2$ by $\partial_i\partial_jd=\partial_j\partial_id$.
  For $\alpha_3$ and $\beta_3$,
  \begin{align*}
    \alpha_3 &= -\Bigl[P_\Gamma(\nabla^2d)\nabla\tilde{f}\,\Bigr]_i\partial_jd = [A\nabla_\Gamma f]_i\nu_j, \\
    \beta_3 &= -\Bigl[P_\Gamma(\nabla^2d)\nabla\tilde{f}\,\Bigr]_j\partial_id = [A\nabla_\Gamma f]_j\nu_i
  \end{align*}
  by \eqref{E:Nabla_Dist}, \eqref{E:W_OrthProj}, \eqref{E:W_Hessian}, and the definition of the tangential gradient operator. (Note that we calculate values of functions at $y\in\Gamma(t).$)
  Therefore, we obtain
  \begin{align*}
    \partial_i^{tan}\partial_j^{tan}f-\partial_j^{tan}\partial_i^{tan}f &= (\alpha_1+\alpha_2+\alpha_3)-(\beta_1+\beta_2+\beta_3) \\
    &= [A\nabla_\Gamma f]_i\nu_j-[A\nabla_\Gamma f]_j\nu_i,
  \end{align*}
  that is, the formula \eqref{E:Exchange_Derivative} holds.
\end{proof}

\begin{proof}[Proof of Lemma~\ref{L:Viscous_General}]
  Let $v$ be a general vector field on $\Gamma$ which may have a nonzero normal component.
  Since $P_\Gamma\nabla_\Gamma v=\nabla_\Gamma v$ and $(\nabla_\Gamma v)^TP_\Gamma=(\nabla_\Gamma v)^T$ we have
  \begin{align} \label{Pf1:Viscous_General}
    2\mathrm{div}_\Gamma(P_\Gamma D^{tan}(v)P_\Gamma) = \mathrm{div}_\Gamma\bigl((\nabla_\Gamma v)P_\Gamma\bigr)+\mathrm{div}_\Gamma\bigl(P_\Gamma(\nabla_\Gamma v)^T\bigr).
  \end{align}
  Let us calculate each term on the right-hand side.
  For $i,j=1,2,3$ the $(i,j)$-entry of $(\nabla_\Gamma v)P_\Gamma$ is of the form
  \begin{align*}
    \bigl[(\nabla_\Gamma v)P_\Gamma\bigr]_{ij} = \sum_{k=1}^3(\partial_i^{tan}v_k)(\delta_{jk}-\nu_j\nu_k).
  \end{align*}
  Thus the $j$-th component of $\mathrm{div}_\Gamma\bigl((\nabla_\Gamma v)P_\Gamma\bigr)$ is
  \begin{align*}
    \Bigl[\mathrm{div}_\Gamma\bigl((\nabla_\Gamma v)P_\Gamma\bigr)\Bigr]_j = \sum_{i=1}^3\partial_i^{tan}\bigl[(\nabla_\Gamma v)P_\Gamma\bigr]_{ij} = \alpha_1+\alpha_2+\alpha_3,
  \end{align*}
  where
  \begin{align*}
    \alpha_1 &:= \sum_{i,k=1}^3\{(\partial_i^{tan})^2v_k\}(\delta_{jk}-\nu_j\nu_k), \\
    \alpha_2 &:= -\sum_{i,k=1}^3(\partial_i^{tan}v_k)(\partial_i^{tan}\nu_j)\nu_k = \sum_{i,k=1}^3(\partial_i^{tan}v_k)A_{ij}\nu_k, \\
    \alpha_3 &:= -\sum_{i,k=1}^3(\partial_i^{tan}v_k)\nu_j(\partial_i^{tan}\nu_k) = \sum_{i,k=1}^3(\partial_i^{tan}v_k)\nu_jA_{ik}.
  \end{align*}
  Here $A_{ij}$ is the $(i,j)$-entry of the Weingarten map $A=-\nabla_\Gamma\nu$.
  By the definitions of $\Delta_\Gamma$ and $P_\Gamma$ we have $\alpha_1=\bigl[P_\Gamma(\Delta_\Gamma v)\bigr]_j$, where $\Delta_\Gamma$ applies each component of the vector field $v$.
  Also, since $A$ is symmetric,
  \begin{align*}
    \alpha_2 = \sum_{i,k=1}^3A_{ji}(\partial_i^{tan}v_k)\nu_k = \bigl[A(\nabla_\Gamma v)\nu\bigr]_j.
  \end{align*}
  Similarly, we have $\alpha_3=\mathrm{tr}[A\nabla_\Gamma v]\nu_j$.
  Therefore, the equality
  \begin{align*}
    \Bigl[\mathrm{div}_\Gamma\bigl((\nabla_\Gamma v)P_\Gamma\bigr)\Bigr]_j = \bigl[P_\Gamma(\Delta_\Gamma v)\bigr]_j+\bigl[A(\nabla_\Gamma v)\nu\bigr]_j+\mathrm{tr}[A\nabla_\Gamma v]\nu_j
  \end{align*}
  holds for each $j=1,2,3$, which means that
  \begin{align} \label{Pf2:Viscous_General}
    \mathrm{div}_\Gamma\bigl((\nabla_\Gamma v)P_\Gamma\bigr) = P_\Gamma(\Delta_\Gamma v)+A(\nabla_\Gamma v)\nu+\mathrm{tr}[A\nabla_\Gamma v]\nu.
  \end{align}
  Calculations of the second term $\mathrm{div}_\Gamma\bigl(P_\Gamma(\nabla_\Gamma v)^T\bigr)$ are more complicated.
  Since
  \begin{align*}
    \Bigl[P_\Gamma(\nabla_\Gamma v)^T\Bigr]_{ij} = \sum_{k=1}^3(\delta_{ik}-\nu_i\nu_k)\partial_j^{tan}v_k,
  \end{align*}
  we have $\Bigl[\mathrm{div}_\Gamma\bigl(P_\Gamma(\nabla_\Gamma v)^T\bigr)\Bigr]_j=\beta_1+\beta_2+\beta_3$, where
  \begin{align*}
    \beta_1 &:= -\sum_{i,k=1}^3(\partial_i^{tan}\nu_i)\nu_k\partial_j^{tan}v_k = \sum_{i,k=1}^3A_{ii}\nu_k\partial_j^{tan}v_k, \\
    \beta_2 &:= -\sum_{i,k=1}^3\nu_i(\partial_i^{tan}\nu_k)\partial_j^{tan}v_k = \sum_{i,k=1}^3\nu_iA_{ik}\partial_j^{tan}v_k, \\
    \beta_3 &:= \sum_{i,k=1}^3(\delta_{ik}-\nu_i\nu_k)\partial_i^{tan}\partial_j^{tan}v_k.
  \end{align*}
  By the definition of the mean curvature,
  \begin{align*}
    \beta_1 = \mathrm{tr}[A]\sum_{k=1}^3(\partial_j^{tan}v_k)\nu_k = H\bigl[(\nabla_\Gamma v)\nu\bigr]_j.
  \end{align*}
  Since $A_{ik}=A_{ki}$ and $A\nu=0$,
  \begin{align*}
    \beta_2 = \sum_{i,k=1}^3(\partial_j^{tan}v_k)A_{ki}\nu_i = -\bigl[(\nabla_\Gamma v)A\nu\bigr]_j = 0.
  \end{align*}
  For $\beta_3$ we have
  \begin{align*}
    \beta_3 = \sum_{i=1}^3\partial_i^{tan}\partial_j^{tan}v_i-\sum_{k=1}^3\nu_k\{\nu\cdot\nabla_\Gamma(\partial_j^{tan}v_k)\}.
  \end{align*}
  The second term on the right-hand side vanishes since $\nu\cdot\nabla_\Gamma(\partial_j^{tan}v_k)=0$ for each $j$ and $k$.
  We apply \eqref{E:Exchange_Derivative} to the first term to get
  \begin{align*}
    \beta_3 &= \sum_{i=1}^3\partial_j^{tan}\partial_i^{tan}v_i+\sum_{i=1}^3[A\nabla_\Gamma v_i]_i\nu_j-\sum_{i=1}^3[A\nabla_\Gamma v_i]_j\nu_i \\
    &= \partial_j^{tan}(\mathrm{div}_\Gamma v)+\mathrm{tr}[A\nabla_\Gamma v]\nu_j-\bigl[A(\nabla_\Gamma v)\nu\bigr]_j.
  \end{align*}
  Therefore, it follows that
  \begin{align*}
    \Bigl[\mathrm{div}_\Gamma\bigl(P_\Gamma(\nabla_\Gamma v)^T\bigr)\Bigr]_j = \partial_j^{tan}(\mathrm{div}_\Gamma v)+\bigl[(HI_3-A)(\nabla_\Gamma v)\nu\bigr]_j+\mathrm{tr}[A\nabla_\Gamma v]\nu_j
  \end{align*}
  for each $j=1,2,3$ and thus
  \begin{align} \label{Pf3:Viscous_General}
    \mathrm{div}_\Gamma\bigl(P_\Gamma(\nabla_\Gamma v)^T\bigr) = \nabla_\Gamma(\mathrm{div}_\Gamma v)+(HI_3-A)(\nabla_\Gamma v)\nu+\mathrm{tr}[A\nabla_\Gamma v]\nu.
  \end{align}
  Substituting \eqref{Pf2:Viscous_General} and \eqref{Pf3:Viscous_General} for \eqref{Pf1:Viscous_General} we obtain the formula \eqref{E:Viscous_General}.
\end{proof}

\begin{proof}[Proof of Lemma~\ref{L:Co-area}]
  For $\rho\in[-\varepsilon,\varepsilon]$ let $\Gamma_\rho:=\{x\in\mathbb{R}^3\mid d(x)=\rho\}$ be a level-set surface of $\Gamma$.
  Suppose that the change of variables formula
    \begin{align} \label{E:Level-set_Integral}
    \int_{\Gamma_\rho}f(z)\,d\mathcal{H}^2(z) = \int_{\Gamma}f(y+\rho\nu(y))J(y,\rho)\,d\mathcal{H}^2(y)
  \end{align}
  holds for each $\rho\in[-\varepsilon,\varepsilon]$.
  Then \eqref{E:Co-area_NBD} and \eqref{E:Co-area_BDY} follow from this formula and
  \begin{align*}
    \int_{\Omega_\varepsilon}f(x)\,dx = \int_{-\varepsilon}^\varepsilon\left(\int_{\Gamma_\rho}f(z)\,d\mathcal{H}^2(z)\right)d\rho,
  \end{align*}
  which is the well-known co-area formula (see e.g.~\cite[Theorem~2.9]{DE13}), and
  \begin{align*}
    \int_{\partial\Omega_\varepsilon}f(x)\,d\mathcal{H}^2(x) = \int_{\Gamma_\varepsilon}f(z)\,d\mathcal{H}^2(z)+\int_{\Gamma_{-\varepsilon}}f(z)\,d\mathcal{H}^2(z).
  \end{align*}
  Let us prove \eqref{E:Level-set_Integral}.
  Since $\Gamma$ is compact, we may take finitely many open subsets $U_k$ of $\mathbb{R}^2$ and local parametrizations $\mu_k\colon U_k\to\Gamma$ ($k=1,\dots,N$) such that $\{\mu_k(U_k)\}_{k=1}^N$ is an open covering of $\Gamma$.
  Let $\{\varphi_k\}_{k=1}^N$ be a partition of unity of $\Gamma$ subordinate to the covering $\{\mu_k(U_k)\}_{k=1}^N$ and for each $\rho\in[-\varepsilon,\varepsilon]$ and $k=1,\dots,N$ set
  \begin{align*}
    \mu_k^\rho(s) := \mu_k(s)+\rho\nu(\mu_k(s)), \quad \varphi_k^\rho(\mu_k^\rho(s)) := \varphi_k(\mu_k(s)), \quad s\in U_k.
  \end{align*}
  Then $\mu_k^\rho\colon U_k\to\Gamma_\rho$ is a local parametrization of $\Gamma_\rho$ whose domain is the same as that of $\mu_k$ and $\{\mu_k^\rho(U_k)\}_{k=1}^N$ is an open covering of $\Gamma_\rho$.
  Moreover, $\{\varphi_k^\rho\}_{k=1}$ is a partition of unity of $\Gamma_\rho$ subordinate to the covering $\{\mu_k^\rho(U_k)\}_{k=1}^N$.
  By these partitions of unity and the definition of integrals over a surface, the proof of \eqref{E:Level-set_Integral} reduces to showing that, for any local parametrization $\mu\colon U\to\Gamma$ with an open subset $U$ of $\mathbb{R}^2$ and $\mu^\rho\colon U\to\Gamma_\rho$ given by $\mu^\rho(s):=\mu(s)+\rho\nu(\mu(s))$, $s\in U$, the formula
  \begin{align} \label{E:Measure_Transform}
    \sqrt{\det\theta^\rho(s)} = J(\mu(s),\rho)\sqrt{\det\theta(s)}, \quad s\in U
  \end{align}
  holds.
  Here $\theta$ is a square matrix of order two given by $\theta:=\nabla'\mu(\nabla'\mu)^T$, where
  \begin{align*}
    \nabla'\mu :=
    \begin{pmatrix}
      \partial'_1\mu_1 & \partial'_1\mu_2 & \partial'_1\mu_3 \\
      \partial'_2\mu_1 & \partial'_2\mu_2 & \partial'_2\mu_3
    \end{pmatrix}
    \quad \left(\partial'_i := \frac{\partial}{\partial s_i}\right),
  \end{align*}
  and $\theta^\rho:=\nabla'\mu^\rho(\nabla'\mu^\rho)^T$.
  We define square matrices $M$ and $M^\rho$ of order three as
  \begin{align*}
    M(s) :=
    \begin{pmatrix}
      \nabla'\mu(s) \\
      [\nu(\mu(s))]^T
    \end{pmatrix}, \quad
    M^\rho(s) :=
    \begin{pmatrix}
      \nabla'\mu^\rho(s) \\
      [\nu(\mu(s))]^T
    \end{pmatrix}.
  \end{align*}
  Here we see $\nu(\mu(s))$ as a three-dimensional row vector.
  In the following argument, we sometimes suppress the argument $s$ and abbreviate $\nu(\mu(s))$ to $\nu$.
  For $i=1,2$ the $i$-th component of $\nabla'\mu(s)\nu(\mu(s))\in\mathbb{R}^2$ is $\partial_i\mu(s)\cdot\nu(\mu(s))=0$ since $\partial_i\mu(s)$ is tangent to $\Gamma$ at $\mu(s)$.
  Therefore, $(\nabla'\mu)\nu=0$ and
  \begin{align*}
    MM^T =
    \begin{pmatrix}
      \nabla'\mu(\nabla'\mu)^T & (\nabla'\mu)\nu \\
      [(\nabla'\mu)\nu]^T & |\nu|^2
    \end{pmatrix}
    =
    \begin{pmatrix}
      \theta & 0 \\
      0 & 1
    \end{pmatrix},
  \end{align*}
  which implies $\det\theta=\det(MM^T)=(\det M)^2$.
  On the other hand, since
  \begin{align*}
    \mu^\rho(s) = \mu(s)+\rho\nu(\mu(s)) = \mu(s)+\rho\nabla d(\mu(s))
  \end{align*}
  by \eqref{E:Nabla_Dist} and thus
  \begin{align*}
    \nabla'\mu^\rho(s) = \nabla'\mu(s)\{I_3+\rho\nabla^2d(\mu(s))\} = \nabla'\mu(s)\{I_3-\rho A(\mu(s))\}
  \end{align*}
  by \eqref{E:W_Hessian}, we have $\nabla'\mu^\rho(s)\nu(\mu(s))=0$ by $\nabla'\mu(s)\nu(\mu(s))=0$ and \eqref{E:W_Normal}.
  Hence as in the case of $\theta$ and $M$ we have $\det\theta^\rho=(\det M^\rho)^2$.
  Moreover, by \eqref{E:W_Normal} and the symmetry of the matrix $I_3-\rho A$,
  \begin{align*}
    M^\rho =
    \begin{pmatrix}
      (\nabla'\mu)(I_3-\rho A) \\
      \nu^T
    \end{pmatrix}
    =
    \begin{pmatrix}
      \nabla'\mu \\
      \nu^T
    \end{pmatrix}(I_3-\rho A)
    = M(I_3-\rho A).
  \end{align*}
  Hence we get
  \begin{align*}
    \det\theta^\rho = (\det M^\rho)^2 = \{\det M\cdot\det(I_3-\rho A)\}^2 = \{\det(I_3-\rho A)\}^2\det\theta.
  \end{align*}
  Finally we observe that the Weingarten map $A$ has the eigenvalues $0$, $\kappa_1$, and $\kappa_2$ and thus
  \begin{align*}
    \det\{I_3-\rho A(\mu(s))\} &= 1\cdot\{1-\rho\kappa_1(\mu(s))\}\cdot\{1-\rho\kappa_2(\mu(s))\} \\
    &= J(\mu(s),\rho) \quad (>0 \quad\text{for sufficiently small $\rho$})
  \end{align*}
  to obtain the formula \eqref{E:Measure_Transform}.
\end{proof}

Now let us return to the moving surface $\Gamma(t)$ and prove Lemmas~\ref{L:Diff_Composite} and~\ref{L:Div_Matrix}.

\begin{proof}[Proof of Lemma~\ref{L:Diff_Composite}]
  As in the proof of Theorem~\ref{T:Eu_Limit} we use the abbreviations \eqref{E:Abbrev}.
  Let $f$ be a function on $S_T$ and $\tilde{f}$ an arbitrary extension of $f$ to $N_T$ satisfying $\tilde{f}|_{S_T}=f$.
  For $(x,t)\in Q_{\varepsilon,T}$ we have $f(\pi,t)=\tilde{f}(\pi,t)$ by $\pi=\pi(x,t)\in\Gamma(t)$ and thus
  \begin{align*}
    \nabla\bigl(f(\pi,t)\bigr) &= \nabla\pi(x,t)\nabla\tilde{f}(\pi,t), \\
    \partial_t\bigl(f(\pi,t)\bigr) &= \partial_t\tilde{f}(\pi,t)+(\partial_t\pi(x,t)\cdot\nabla)\tilde{f}(\pi,t).
  \end{align*}
  Hence it is sufficient for \eqref{E:Nabla_Composite} and \eqref{E:Dt_Composite} to show that
  \begin{align}
    \nabla\pi(x,t) &= P_\Gamma(\pi,t)+d(x,t)A(\pi,t)+R(d^2), \label{E:Nabla_Proj} \\
    \partial_t\pi(x,t) &= V_\Gamma^N(\pi,t)\nu(\pi,t)+d(x,t)\nabla_\Gamma V_\Gamma^N(\pi,t)+R(d), \label{E:Dt_Proj}
  \end{align}
  since
  \begin{gather*}
    A\nabla\tilde{f} = AP_\Gamma\nabla\tilde{f} = A\nabla_\Gamma f, \\
    \partial_t\tilde{f}+(V_\Gamma^N\nu\cdot\nabla)\tilde{f} = \partial^\circ f, \quad (\nabla_\Gamma V_\Gamma^N\cdot\nabla)\tilde{f} = (\nabla_\Gamma V_\Gamma^N\cdot\nabla_\Gamma)f
  \end{gather*}
  on $\Gamma(t)$ by the definition of the tangential gradient, \eqref{E:W_OrthProj}, and \eqref{E:Material_Euler_Coordinate} with $v=V_\Gamma^N\nu$.
  By $\pi(x,t) = x-d(x,t)\nabla d(x,t)$ and \eqref{E:Nabla_Dist} we have
  \begin{align*}
    \nabla\pi(x,t) &= I_3-\nabla d(x,t)\otimes\nabla d(x,t)-d(x,t)\nabla^2d(x,t) \\
    &= P_\Gamma(\pi,t)-d(x,t)\nabla^2d(x,t).
  \end{align*}
  Also, we expand $\nabla^2d$ in powers of $d$ and apply \eqref{E:W_Hessian} to obtain
  \begin{align*}
    \nabla^2d(x,t) = \nabla^2d(\pi,t)+R(d) = -A(\pi,t)+R(d).
  \end{align*}
  Hence \eqref{E:Nabla_Proj} follows.
  Similarly, we differentiate $\pi(x,t)=x-d(x,t)\nabla d(x,t)$ with respect to $t$ and apply \eqref{E:Nabla_Dist} and \eqref{E:Dt_Dist_NBD} to get
  \begin{align*}
    \partial_t\pi(x,t) = V_\Gamma^N(\pi,t)\nu(\pi,t)-d(x,t)\partial_t\nabla d(x,t).
  \end{align*}
  Moreover, by $\partial_t\nabla d=\nabla\partial_td$, \eqref{E:Dt_Dist_NBD}, and \eqref{E:Nabla_Proj},
  \begin{align*}
    \partial_t\nabla d(x,t) = -\nabla\bigl(V_\Gamma^N(\pi,t)\bigr) = -\nabla\pi(x,t)\nabla\widetilde{V}_\Gamma^N(\pi,t) = -\nabla_\Gamma V_\Gamma^N(\pi,t)+R(d),
  \end{align*}
  where $\widetilde{V}_\Gamma^N$ is an extension of $V_\Gamma^N$ to $N_T$ with $\widetilde{V}_\Gamma^N|_{S_T}=V_\Gamma^N$.
  Applying this to the above equality for $\partial_t\pi$ we obtain \eqref{E:Dt_Proj}.
\end{proof}

\begin{proof}[Proof of Lemma~\ref{L:Div_Matrix}]
  We use the abbreviations \eqref{E:Abbrev}.
  For $i,j=1,2,3$, let $M_{ij}$ be the $(i,j)$-entry of a square matrix $M$ of order three.
  We differentiate both sides of $D_{ij}(x) = S_{ij}(\pi)+d(x,t)S_{ij}^1(\pi)+R(d^2)$ with respect to $x_i$ and apply \eqref{E:Nabla_Proj} to get
  \begin{align*}
    \partial_iD_{ij}(x) = \partial_i^{tan}S_{ij}(\pi)+S_{ij}^1(\pi)\partial_id(x,t)+R(d).
  \end{align*}
  Therefore, the $j$-th component of $\mathrm{div}\,D(x)$ is
  \begin{align*}
    [\mathrm{div}\,D(x)]_j &= \sum_{i=1}^3\partial_iD_{ij}(x) = \sum_{i=1}^3\{\partial_i^{tan}S_{ij}(\pi)+S_{ij}^1(\pi)\partial_id(x,t)\}+R(d(x,t)) \\
    &= [\mathrm{div}_\Gamma S(\pi)]_j+\Bigl[\bigl(S^1(\pi)\bigr)^T\nabla d(x,t)\Bigr]_j+R(d)
  \end{align*}
  and \eqref{E:Div_Matrix} follows by \eqref{E:Nabla_Dist}.
\end{proof}

\section{Comparison of vector Laplacians} \label{S:Appendix_B}
The purpose of this appendix is to give a proof of the formula \eqref{E:Bochner_Beltrami} in Lemma~\ref{L:Riemannian_Connection}.
Main tools for the proof are the Gauss formula \eqref{E:Gauss_Formula} and
\begin{align} \label{E:Bochner_Connection}
  \Delta_BX = \mathrm{tr}\overline{\nabla}^2X = \sum_{i=1}^2\Bigl(\overline{\nabla}_i\overline{\nabla}_iX-\overline{\nabla}_{\overline{\nabla}_ie_i}X\Bigr) \quad\text{on}\quad \Gamma
\end{align}
for any tangential vector field $X$ on $\Gamma$, where $\{e_1,e_2\}$ denotes a local orthonormal frame of $T\Gamma$ (i.e. an orthonormal basis of the tangent plane of $\Gamma$ defined on a relative open subset of $\Gamma$) and $\overline{\nabla}_i:=\overline{\nabla}_{e_i}$ (for a proof of \eqref{E:Bochner_Connection} see~\cite[Proposition~34]{P06bk} and~\cite[Proposition~2.1 in Appendix~C]{T11bk}).
Hereafter all calculations are carried out on the surface $\Gamma$.

We fix coordinates of $\mathbb{R}^3$ and write $x_j$ ($j=1,2,3$) for the $j$-th component of a point $x\in\mathbb{R}^3$ under this fixed coordinates.
Let $X=(X_1,X_2,X_3)$ be a tangential vector field on $\Gamma$ and $\{e_1,e_2\}$ be a local orthonormal frame of $T\Gamma$.
For $i=1,2$, by the Gauss formula \eqref{E:Gauss_Formula_Another} and the fact that $\overline{\nabla}_iX$ is tangential we have
\begin{align*}
  \overline{\nabla}_iX = (e_i\cdot\nabla_\Gamma)X-(AX\cdot e_i)\nu = P_\Gamma\{(e_i\cdot\nabla_\Gamma)X\}.
\end{align*}
Here the second equality follows from $P_\Gamma\nu=0$.
Hence
\begin{align*}
  \overline{\nabla}_i\overline{\nabla}_i X &= P_\Gamma\bigl[(e_i\cdot\nabla_\Gamma)\{(e_i\cdot\nabla_\Gamma)X-(AX\cdot e_i)\nu\}\bigr] \\
  &= P_\Gamma\bigl[(e_i\cdot\nabla_\Gamma)\{(e_i\cdot\nabla_\Gamma)X\}\bigr]-(AX\cdot e_i)P_\Gamma\{(e_i\cdot\nabla_\Gamma)\nu\},
\end{align*}
where we used $P_\Gamma\nu=0$ again in the second equality.
By setting $e_i=(e_i^1,e_i^2,e_i^3)$ the $j$-th component of the vector $(e_i\cdot\nabla_\Gamma)\{(e_i\cdot\nabla_\Gamma)X\}$ ($j=1,2,3$) is of the form
\begin{align*}
  \sum_{k,l=1}^3e_i^k\partial_k^{tan}(e_i^l\partial_l^{tan}X_j) &= \sum_{k,l=1}^3\{e_i^ke_i^l\partial_k^{tan}\partial_l^{tan}X_j+e_i^k(\partial_k^{tan}e_i^l)\partial_l^{tan}X_j\} \\
  &= \mathrm{tr}\bigl[(e_i\otimes e_i)\nabla_\Gamma^2 X_j\bigr]+\{(e_i\cdot\nabla_\Gamma)e_i\}\cdot\nabla_\Gamma X_j.
\end{align*}
Also, by the symmetry of the Weingarten map $A=-\nabla_\Gamma\nu$,
\begin{align*}
  [(e_i\cdot\nabla_\Gamma)\nu]_j = \sum_{k=1}^3e_i^k\partial_k^{tan}\nu_j = -\sum_{k=1}^3e_i^kA_{kj} = -[Ae_i]_j.
\end{align*}
By these equalities and \eqref{E:W_OrthProj} the $j$-th component of $\overline{\nabla}_i\overline{\nabla}_iX$ is
\begin{multline} \label{E:Second_Covariant_First}
  \Bigl[\overline{\nabla}_i\overline{\nabla}_iX\Bigr]_j = \sum_{k=1}^3[P_\Gamma]_{jk}\Bigl(\mathrm{tr}\bigl[(e_i\otimes e_i)\nabla_\Gamma^2 X_k\bigr]+\{(e_i\cdot\nabla_\Gamma)e_i\}\cdot\nabla_\Gamma X_k\Bigr) \\
  +(AX\cdot e_i)[Ae_i]_j.
\end{multline}
On the other hand, $\overline{\nabla}_{\overline{\nabla}_ie_i}X$ is of the form
\begin{align*}
  \overline{\nabla}_{\overline{\nabla}_ie_i}X = P_\Gamma\Bigl\{\Bigl(\overline{\nabla}_ie_i\cdot\nabla_\Gamma\Bigr)X\Bigr\} = P_\Gamma\Bigl(\bigl[\{P_\Gamma(e_i\cdot\nabla_\Gamma)e_i\}\cdot\nabla_\Gamma\bigl]X\Bigr)
\end{align*}
and, since $\{(P_\Gamma F)\cdot\nabla_\Gamma\}G = (F\cdot\nabla_\Gamma)G$ holds for (not necessarily tangential) vector fields $F$ and $G$ on $\Gamma$ we have
\begin{align} \label{E:Second_Covariant_Second}
  \Bigl[\overline{\nabla}_{\overline{\nabla}_ie_i}X\Bigr]_j = \sum_{k=1}^3[P_\Gamma]_{jk}\Bigl(\{(e_i\cdot\nabla_\Gamma)e_i\}\cdot\nabla_\Gamma X_k\Bigr).
\end{align}
Applying \eqref{E:Second_Covariant_First} and \eqref{E:Second_Covariant_Second} to \eqref{E:Bochner_Connection} we get
\begin{align*}
  [\Delta_B X]_j = \sum_{i=1}^2\left(\sum_{k=1}^3[P_\Gamma]_{jk}\mathrm{tr}\bigl[(e_i\otimes e_i)\nabla_\Gamma^2 X_k\bigr]+(AX\cdot e_i)[Ae_i]_j\right).
\end{align*}
Furthermore, since $e_1$ and $e_2$ form an orthonormal basis of the tangent plane of $\Gamma$ it follows that $\sum_{i=1}^2(e_i\otimes e_i)=P_\Gamma$ and thus
\begin{align*}
  \sum_{i=1}^2\mathrm{tr}\bigl[(e_i\otimes e_i)\nabla_\Gamma^2X_k\bigr] = \mathrm{tr}[P_\Gamma\nabla_\Gamma^2X_k] = \mathrm{tr}[\nabla_\Gamma^2X_k] = \Delta_\Gamma X_k
\end{align*}
for each $k=1,2,3$ by $P_\Gamma\nabla_\Gamma=\nabla_\Gamma$, and
\begin{align*}
  \sum_{i=1}^2(AX\cdot e_i)Ae_i = \sum_{i=1}^2A(e_i\otimes e_i)AX = AP_\Gamma AX = A^2X,
\end{align*}
by $(AX\cdot e_i)Ae_i=(Ae_i\otimes e_i)AX=A(e_i\otimes e_i)AX$ and \eqref{E:W_OrthProj}.
Therefore,
\begin{align*}
  [\Delta_BX]_j = \sum_{k=1}^3[P_\Gamma]_{jk}\Delta_\Gamma X_k+[A^2X]_j = [P_\Gamma\Delta_\Gamma X]_j+[A^2X]_j
\end{align*}
for each $j=1,2,3$, which yields the formula \eqref{E:Bochner_Beltrami}.

\end{appendices}

\section*{Acknowledgments}
The author is grateful to Professor Yoshikazu Giga for his valuable comments on this work.
The work of the author was supported by Grant-in-Aid for JSPS Fellows No. 16J02664 and the Program for Leading Graduate Schools, MEXT, Japan.

\bibliographystyle{hsiam}
\bibliography{Euler_NS_Thin_Ref_arXiv}

\end{document}